\newcommand\reallywidehat[1]{%
\savestack{\tmpbox}{\stretchto{%
\scaleto{%
\scalerel*[\widthof{\ensuremath{#1}}]{\kern-.6pt\bigwedge\kern-.6pt}%
{\rule[-\textheight/2]{1ex}{\textheight}}
}{\textheight}%
}{0.5ex}}%
\stackon[1pt]{#1}{\tmpbox}%
}
\newcommand\qan{{\quad\hbox{and}\quad}}
\newcommand\qin{{\quad\hbox{in}\quad}}
\newcommand\qon{{\quad\hbox{on}\quad}}
\renewcommand\div{{\mathrm{div}}}
\newcommand\bdiv{{\mathbf{div}}}
\newcommand\bta{{\boldsymbol\tau}}
\newcommand\bze{{\boldsymbol\zeta}}
\newcommand\tr{{\mathrm{tr}}}
\newcommand\bsi{{\boldsymbol\sigma}}
\newcommand\btet{{\boldsymbol\theta}}
\newcommand\bn{{\mathbf n}}
\newcommand\bu{{\mathbf u}}
\newcommand\bz{{\mathbf z}}
\newcommand\by{{\mathbf y}}
\newcommand\bv{{\mathbf v}}
\newcommand\bw{{\mathbf w}}
\newcommand\disp{\displaystyle}
\newcommand{\Pcalbf}{{\boldsymbol{\mathcal P}}}
\newcommand{\Pcalbb}{\mathcal{P}\hspace{-1.5ex}\mathcal{P}}
\numberwithin{equation}{section}
\newtheorem{Theorem}{Theorem}[section]
\newtheorem{Definition}[Theorem]{Definition}
\newtheorem{prob}{Problem}
\newtheorem{lemma}[Theorem]{Lemma}
\begin{document}
\date{\small\textsl{\today}}
\title{\textsl{Analysis of weak Galerkin mixed FEM based on the velocity--pseudostress formulation for Navier--Stokes equation on polygonal meshes}
}
\author{
	{\sc Zeinab Gharibi}\thanks{CI$^2$MA and GIMNAP-Departamento de Matem\'atica, Universidad del B\'io-B\'io, Casilla 5-C, Concepci\'on, Chile, email: {\tt zgharibi@ubiobio.cl}.} 
	\quad
	{\sc Mehdi Dehghan}\thanks{Department of Applied Mathematics, Faculty of Mathematics and 
		Computer Sciences, Amirkabir University of Technology (Tehran Polytechnic), No. 424, Hafez Ave., 
		Tehran, 15914, Iran, email: {\tt mdehghan@aut.ac.ir}.}		
}
\maketitle
\vspace{-1cm} \noindent
\noindent\linethickness{.5mm}{\line(1,0){500}}
\begin{abstract}
The present article introduces, mathematically analyzes, and numerically validates a new weak Galerkin (WG) mixed-FEM based on Banach spaces for the stationary Navier--Stokes equation in pseudostress-velocity formulation. More precisely, a modified pseudostress tensor, called $ \bsi $, depending on the pressure, and the diffusive and convective terms has been introduced in the proposed technique, and a dual-mixed variational formulation has been derived where the aforementioned pseudostress tensor and the velocity, are the main unknowns of the system, whereas the pressure is computed via a post-processing formula. Thus, it is sufficient to provide a WG space for the tensor variable and a space of piecewise polynomial vectors of total degree at most 'k' for the velocity.
Moreover, in order
to define the weak discrete bilinear form, whose continuous version involves the classical divergence operator, the weak divergence operator as a well-known alternative for the classical divergence operator in a suitable discrete subspace is proposed.
 The well-posedness of the numerical solution is proven using a fixed-point approach and the discrete versions of the Babuška--Brezzi theory and the Banach--Nečas--Babuška theorem. Additionally, an a priori error estimate is derived for the proposed method. Finally, several numerical results illustrating the method's good performance and confirming the theoretical rates of convergence are presented.\\

\textbf{Keywords}: Weak Galerkin. Pseudostress-velocity formulation. Mixed finite element methods. Navier--Stokes equation.  Well-posedness.
Error analysis.		\vspace{.5cm}\\
\textbf{Mathematics Subject Classification:} 34B15, 65L60, 65M70.
\end{abstract}
\noindent
\noindent\linethickness{.5mm}{\line(1,0){500}}

\section{Introduction}
\subsection{Scope}
The weak Galerkin method (WG), introduced in \cite{Wang13} for second-order elliptic equations, approximates the differential operators in the variational formulation through a framework that emulates the theory of distributions for piecewise polynomials. Additionally, the usual regularity of the approximating functions in this method is compensated by carefully-designed stabilizers. In turn, the WG methods have been studied for solving various models (cf. (cf.  \cite{Mu14,Zhai16,Zhai16,Mu20,Wang13,Wang14,Chen16,Liu19,L.Mu15,
	Dehghan21,Gharibi21,Dehghan22})), demonstrating their substantial potential as a formidable numerical method in scientific computing.
The key distinction between WG methods and other existing finite element techniques lies in their utilization of weak derivatives and weak continuities in formulating numerical schemes based on conventional weak forms for the underlying PDE problems.
Because of their significant structural pliability, WG methods are aptly suited for a broad class of PDEs, ensuring the requisite stability and accuracy in approximations.

\medskip\noindent

On the other hand, the development of certain mixed finite element methods has become a new research field aimed at solving linear and nonlinear problems using velocity-pressure, stress-velocity-pressure, and pseudostress-velocity-based formulations. A principal benefit of stress-based and pseudostress-based formulations is the immediate calculation of physical quantities, such as stress, without the need for velocity derivatives, preventing the degradation of precision. Despite the extensive research on the weak Galerkin FEM, there are few publications on the weak Galerkin mixed-FEM in the literature.
 Specifically, and up to our knowledge, \cite{Wang14} and \cite{Gharibi21} are the only two works in which the weak Galerkin mixed-FEM for linear and nonlinear PDEs based on the dual formulation are proposed and analyzed, including corresponding optimal errors estimates.
 Although many research works have been studied and analyzed on the weak Galerkin method for the Navier--Stokes problem (cf. \cite{Liu19,Zhang18,Zhang19,Zhang20,Hu19}), most existing schemes are extracted by the Stokes-type (or primal-mixed) formulation, where the velocity and pressure are principle unknowns.

\medskip\noindent

According to the above discussion, the objective of the present paper is to develop and design a new mixed FEM based on the nonstandard Banach spaces drawn by Gatica et al. \cite{Gatica21} within the framework of the weak Galerkin method to solve the stationary Navier--Stokes equation.
 More precisely, we are particularly interested in the development of mixed formulations not involving any augmentation procedure (as done, e.g. in \cite{Camano16,Camano17}). To this end, we now extend the applicability of the approach employed in \cite{Camano21} for the fluid model, in the framework weak Galerkin method. 
In fact, instead of using the primal or dual-mixed method (see e.g. \cite{Wang14,Gharibi21}), we now employ a modified mixed formulation and  adapt the approach from \cite{Gharibi24} to propose, up to our knowledge by the first time, a weak Galerkin mixed-FEM for Navier--Stokes, which consists of introducing the gradient of velocity and a vector version of the Bernoulli tensor as further unknowns. In this way, and besides eliminating the pressure, which can be approximated later on via postprocessing, the resulting mixed variational formulation does not need to incorporate any augmented term, and it yields basically the same Banach saddle-point structure.

\subsection{Outline and notations} 
The remainder of the paper is organized as follows. In Sec. \ref{sec2}, we refer to Ref. \cite{Camano21} to introduce the model of interest, recapitulate the associated dual-mixed variational formulation with the unknowns $\bsi$ and $\bu$ living in suitable Banach spaces, and state the main result establishing its well-posedness. We then introduce the WG discretization in Sec. \ref{sec3}, following Refs. \cite{Wang14} and \cite{Gharibi21}. This section comprises four main parts: first, we state basic assumptions on the mesh; second, we define the local WG space, projections, and weak differential operators; third, we discuss their approximation properties; and fourth, we derive the global WG subspace and the disceret scheme.
In Sec. \ref{sec4}, we analyze the solvability of our discrete scheme using a fixed-point strategy. To accomplish this, we derive common estimates for the bilinear and trilinear forms, as well as the discrete inf-sup condition. The classical Banach fixed-point theorem then allows us to obtain the principal result.
Sec. \ref{sec5} is dedicated to deriving a priori error estimates for the numerical solution under a small data assumption. Finally, in Sec. \ref{sec6}, we present some numerical experiments to confirm the theoretical correctness and effectiveness of the discrete schemes.

For any vector fields $ \mathbf{v}=(v_1, v_2)^{\tt t} $ and $ \mathbf{w}=(w_1, w_2)^{\tt t} $, we set the gradient, divergence and tensor product operators as
\[
\boldsymbol{\nabla}\mathbf{v}:=(\nabla v_1, \nabla v_2), \quad \operatorname{div}(\mathbf{v}):=\partial_x v_1 + \partial_y v_2, 
\qan \mathbf{v}\otimes\mathbf{w}\,:=\, (v_i \, w_j)_{i,j=1,2} \,,
\]

respectively. In addition, denoting by $\mathbb{I}$ the identity matrix of $\mathrm{R}^2$, for any tensor fields 
$\bta=(\tau_{ij}),~\bze=(\zeta_{ij})\in \mathrm{R}^{2\times 2}$, we write as usual
\[
\bta^{\mathtt{t}}:=(\tau_{ji}), \quad \operatorname{tr}(\bta):=\tau_{11}+\tau_{22},\quad 
\bta^{\mathtt{d}}:= \bta-\dfrac{1}{2}\operatorname{tr}(\bta)\mathbb{I}\,,
\qan \bta: \bze:= \sum_{i,j=1}^{2}\tau_{ij}\zeta_{ij} \,,
\]
which corresponds, respectively, to the transpose, the trace, and the deviator tensor of $ \bta $, and to the
tensorial product between $ \bta $ and $ \bze $.

Throughout the paper, given a bounded domain $\Omega$, we let $\mathcal{O}$ be any given open subset of $\Omega$. By $(\cdot, \cdot)_{0,\mathcal{O}}$ and $\Vert\cdot\Vert_{0,\mathcal{O}}$ we denote the usual integral inner product and the corresponding norm of $\mathrm{L}^{2}(\mathcal{O})$, respectively. For positive integers $m$ and $r$, we shall use the common notation for the Sobolev spaces $\mathrm{W}^{m,r}(\mathcal{O})$ with the corresponding norm and semi-norm $\Vert\cdot\Vert_{m,r,\mathcal{O}}$ and $\vert\cdot\vert_{m,r,\mathcal{O}}$, respectively; and if $r=2$, we set $\mathrm{H}^{m}(\mathcal{O}):=\mathrm{W}^{m,2}(\mathcal{O})$, $\Vert\cdot\Vert_{m,\mathcal{O}}:=\Vert\cdot\Vert_{m,2,\mathcal{O}}$ and $\vert\cdot\vert_{m,\mathcal{O}}:=\vert\cdot\vert_{m,2,\mathcal{O}}$. 
If $\mathcal{O}=\Omega$, the subscript will be omitted. Furthermore, $\textbf{M}$ and $\mathbb{M}$ represent corresponding vectorial and tensorial counterparts of the scalar functional space $\mathrm{M}$.
On the other hand, given $t\in (1,+\infty)$, letting $ \operatorname{\mathbf{div}} $ 
be the usual divergence operator $ \operatorname{div} $ acting along the rows of a given tensor, we introduce the standard Banach space
\[
\mathbb{H}(\bdiv_t;\Omega)\,:=\,\Big\{\bta\in \mathbb{L}^{2}(\Omega):\quad  
\bdiv(\bta)\in \mathbf{L}^{t}(\Omega)\Big\}\,,
\]
equipped with the usual norm
\[
\|\bta\|_{\bdiv_t ; \Omega}\,:=\,\|\bta\|_{0,\Omega} +
\|\bdiv(\bta)\|_{0,t; \Omega}, \quad\quad 
\forall\,\bta\in \mathbb{H}(\bdiv_t ; \Omega),
\]
\section{The model problem and its continuous formulation}\label{sec2}
Consider a spatial bounded domain  $\Omega\subset\mathbb{R}^{d}$ ($d=2,3$)  with a Lipschitz continuous boundary $\partial\Omega$ with outward-pointing unit normal $\mathbf{n}$. 
We focus on solving the Navier--Stokes equation with viscosity $\nu$, where, given the body force term $\textbf{f}\in \textbf{L}^{2}(\Omega)$ and suitable boundary data $\mathbf{g}\in \textbf{H}^{1/2}(\partial\Omega)$, the objective is to find a velocity field $\mathbf{u}$ and a pressure field $p$ such that
\begin{equation}\label{Eq0_1}
\begin{array}{rcll}
-\nu\,\boldsymbol{\Delta} \bu+\bu\cdot\boldsymbol{\nabla} \bu+\nabla p \,&=&\,\textbf{f}  &\qin\Omega\,, \\[2ex]
\div(\bu)\,&=&\,0  & \qin \Omega\,, \\[2ex]
\bu \,&=&\, \mathbf{g} & \qon \partial\Omega\,,
\end{array}
\end{equation}
In addition, in order to guarantee uniqueness of the pressure, this unknown will be sought in the space
\begin{equation}\label{eq:cond.p}
	L_0^{2}(\Omega)\,:=\, \Big\{q\in L^{2}(\Omega):\quad \int_{\Omega}p \,=\, 0 \Big\}\,.
\end{equation}
Note that, due to the incompressibility of the fluid (cf. second row of \eqref{Eq0_1}), $ \mathbf{g} $ must satisfy
\[
\disp\int_{\Omega}\mathbf{g}\cdot\bn \,=\, 0 \,.
\]
For the subsequent analysis we assume that coefficient $ \nu $ is piecewise constant and positive.

Next, to obtain a velocity--pseudostress formulation, the first step is to rewrite equation \eqref{Eq0_1} so that stress and velocity are only unknowns of equation. To achieve this, we introduce a tensor field denoted by $\bsi$, represented as
\begin{equation}\label{As1}
 \boldsymbol{\sigma}\,:=\,\nu \boldsymbol{\nabla} \mathbf{u}-\mathbf{u}\otimes\mathbf{u}-(p+r_{\mathbf{u}})\mathbb{I}\quad \qin\Omega\,,
\end{equation}
where 
\[
r_{\mathbf{u}}\,:=\,-c_{r}(\operatorname{tr}(\mathbf{u}\otimes \mathbf{u}),1)_{0,\Omega}\,=\,-c_{r}(\mathbf{u},\mathbf{u})_{0,\Omega}\qquad \text{with}~~ c_{r}\,=\,\dfrac{1}{2|\Omega|}\,.
\]
In this way, applying the trace operator to both sides of \eqref{As1}, and utilizing the incompressibility condition $\operatorname{div}(\mathbf{u})=0$, one arrives at 
\begin{equation}\label{As1a}
p \,=\,-\dfrac{1}{2}\left(\operatorname{tr}(\boldsymbol{\sigma})+\operatorname{tr}(\mathbf{u}\otimes \mathbf{u})\right)-r_{\mathbf{u}}\quad \qin\Omega\,.
\end{equation}
which allows us to eliminate the pressure variable from the formulation. In turn, according
to \eqref{As1a}, the assumption \eqref{eq:cond.p} becomes
\begin{equation}\label{eq:cond.tr}
	\disp\int_{\Omega}\tr(\bsi)\,=\, 0\,.
\end{equation}
Hence, after replacing back \eqref{As1} in \eqref{Eq0_1},
gathering the resulting equation and \eqref{eq:cond.tr}, we have the following problem, which contains unknowns $\bsi$ and $\bu$.
\begin{prob}[Model problem]
\label{p:1}
Find $\boldsymbol{\sigma}, \mathbf{u}$ such that
\begin{equation*}
\begin{cases} 
\hspace{.1cm}\boldsymbol{\sigma}^{\mathtt{d}}+(\mathbf{u}\otimes\mathbf{u})^{\mathtt{d}}\,=\,\nu\,\boldsymbol{\nabla} \mathbf{u} &  \qin \Omega\,, \\[1mm]
\hspace{1.3cm}\operatorname{\mathbf{div}}( \boldsymbol{\sigma})\,=\,-\mathbf{f} &  \qin \Omega\,,\\[1mm] 
\hspace{1.1cm}\disp\int_{\Omega}\tr(\bsi)\,=\, 0\,.
\end{cases}
\end{equation*}
supplied with the following boundary condition
\[
\mathbf{u}\,=\,\mathbf{g}\quad   \qon \partial\Omega\,.
\]
\end{prob}
Next, in order to derive a velocity--pseudostress based-mixed formulation for Problem \ref{p:1}, we let $\mathbb{X}$ and $\mathbf{Y}$ be corresponding test spaces, and then proceed to
multiply the first and second equations of Problem \ref{p:1} by $\bta$ and $\bv$, respectively, and use the fact that $\tr(\bta^{\mathtt{d}})=0$, to get
\begin{equation}\label{EE1}
\dfrac{1}{\nu}\int_{\Omega}\bsi^{\mathtt{d}}: \bta^{\mathtt{d}}+\dfrac{1}{\nu}\int_{\Omega}(\bu\otimes\bu)^{\mathtt{d}}:\bta\,=\,	\int_{\Omega}
\boldsymbol{\nabla}\bu: \bta\qquad \forall\, \bta\in \mathbb{X}\,,
\end{equation}
and
\begin{equation}\label{EE2}
\int_{\Omega}\bdiv (\bsi)\cdot \bv\,=\,-\int_{\Omega}\textbf{f}\cdot \bv\qquad \forall\, \bv\in \mathbf{Y}\,,
\end{equation}
it is easy to notice that, thanks to Cauchy--Schwarz's inequality, the first term on the left hand side of \eqref{EE1} makes sense for $ \bsi,\bta\in \mathbb{L}^{2}(\Omega) $.
In turn, regarding the term on the right hand side of \eqref{EE1}, assuming originally that $ \bu\in\mathbf{H}^{1}(\Omega) $, and given $ t,t'\in (1,\infty) $, conjugate to each other, 
we can integrate by parts with $\bta \in \mathbb H(\bdiv_t;\Omega)$, so that using the Dirichlet boundary conditions provided in Problem \ref{p:1}, we obtain
\begin{equation}\label{EE1-otra}
	\int_{\Omega}
	\boldsymbol{\nabla}\bu: \bta\,=\,-\int_{\Omega}\bu\cdot\bdiv(\bta)
	+ \langle \bta \bn,\mathbf{g}\rangle \quad 
	\forall\, \bta\in \mathbb{H}(\bdiv_t;\Omega) \,,
\end{equation}
where $\langle \cdot,\cdot\rangle_{\Gamma}$ stands for the duality 
$(\mathbf H^{-1/2}(\Gamma),\mathbf H^{1/2}(\Gamma)\big)$.
Now, from the first term on the right hand side of the foregoing equation, along with the Sobolev embedding 
$ \mathbf{H}^{1}(\Omega)\subset \mathbf{L}^{t'}(\Omega) $ , we realize that it actually suffices to look for 
$\bu\in \mathbf{L}^{t'}(\Omega)$. However, it is clear from \eqref{EE1} 
that its second term is well defined if $\bu\in \mathbf{L}^{4}(\Omega)$, which yields $t'= 4$ and 
thus $t = 4/3$. 

At this point, in order to deal with the null mean value of $ \tr(\bsi) $ (cf. third row of Problem \ref{p:1}), we introduce
the subspace of $ \mathbb{H}(\bdiv_{4/3};\Omega) $ given by
\[
\mathbb{H}_0(\operatorname{\mathbf{div}}_{4/3};\Omega)
\,:=\, \Big\{\bta\in \mathbb{X}: \quad \int_\Omega \tr(\bta) \,=\, 0 \Big\}\,.
\]
Then, testing the new \eqref{EE1} against $ \bta\in\mathbb{H}(\operatorname{\mathbf{div}}_{4/3};\Omega) $ is equivalent to doing it against $ \bta\in\mathbb{H}_0(\operatorname{\mathbf{div}}_{4/3};\Omega) $,
and taking into account the above discussion, we define the testing spaces as
\[
\mathbb{X}\,:=\,\mathbb{H}_{0}(\operatorname{\textbf{div}}_{4/3};\Omega),\quad\quad \text{with}\quad\quad \Vert\cdot\Vert_{\mathbb{X}}\,:=\,\|\cdot\|_{\operatorname{\textbf{div}}_{4/3} ; \Omega}
\]
and
\[
\mathbf{Y}\,:=\,\textbf{L}^{4}(\Omega),\qquad\qquad \text{with}\quad\quad \Vert\cdot\Vert_{\mathbf{Y}}\,:=\,\Vert\cdot\Vert_{0,4,\Omega}.
\]
Let us introduce the following bilinear (and trilinear) forms 
\begin{align*}
 \mathcal{A}(\cdot,\cdot): \mathbb{X}\times\mathbb{X}\rightarrow \mathrm{R}& \qquad\qquad \mathcal{A}(\bze, \bta)\,:=\,\dfrac{1}{\nu}\int_{\Omega}\bze^{\mathtt{d}}: \bta^{\mathtt{d}}\,,\\
 \mathcal{B}(\cdot,\cdot): \mathbb{X}\times\mathbf{Y}\rightarrow \mathrm{R}& \qquad\qquad \mathcal{B}(\bta,\bv)\,:=\,\int_{\Omega}\bv\cdot\operatorname{\textbf{div}}(\bta)\,,\\
  \mathcal{C}(\cdot;\cdot,\cdot): \mathbf{Y}\times\mathbf{Y}\times\mathbb{X}\rightarrow \mathrm{R}& \qquad \qquad\mathcal{C}(\textbf{w},\bv;\bta)\,:=\,\dfrac{1}{\nu}\int_{\Omega}(\textbf{w}\otimes\bv)^{\mathtt{d}}:\bta\,,
\end{align*}
and the linear functionals (associated to given data)
\begin{equation*}
\mathcal{G}(\bta)\,:=\,\int_{\partial\Omega}\textbf{g}\cdot \bta\mathbf{n}\quad\forall\,\bta\in\mathbb{X}
\qan \mathcal{F}(\bv)\,:=\,-\int_{\Omega}\textbf{f}\cdot \bv\quad \forall\,\bv\in\mathbf{Y}\,.
\end{equation*}
Then, with these forms at hand, the variational formulation of Problem \ref{p:1} reads as follows:
\begin{prob}[variational problem]
\label{p:2}
Find the tensor $\boldsymbol{\sigma}\in \mathbb{X}$ and the velocity $\mathbf{u}\in\mathbf{Y}$ such that 
\begin{equation*}
\left\{\begin{array}{ll}
\mathcal{A}(\boldsymbol{\sigma}, \bta)+\mathcal{C}(\mathbf{u},\mathbf{u};\bta)+\mathcal{B}(\bta,\mathbf{u})\,=\,\mathcal{G}(\bta)\quad\forall\,\bta\in\mathbb{X}\,,  \\[3mm]
\hspace{4.2cm}\mathcal{B}(\boldsymbol{\sigma},\mathbf{v})\,=\,\mathcal{F}(\mathbf{v})\quad\forall\,\bv\in\mathbf{Y}\,. 
\end{array}\right.
\end{equation*}
\end{prob}
The solvability result concerning Problem \eqref{p:2} is established as follows.
\begin{Theorem}\label{WP_C}
Let $\delta>0$ be constant related to the inf-sup condition of the linear part of the left-hand side of Problem \ref{p:2} (cf. Ref. \cite[Eq. (3.29)]{Camano21}) and $c_{g}$ be the upper bound of $\mathcal{G}(\cdot)$, define the ball
\begin{equation*}
\widehat{\mathbf{Y}}\,:=\,\Big\{\mathbf{z}\in \mathbf{Y}:\quad\quad \Vert \mathbf{z}\Vert_{\mathbf{Y}}\,\leq\, \dfrac{\delta\nu}{2} \Big\}\,,
\end{equation*}
and assume that the given data satisfy
\begin{equation*}
\big(\dfrac{\nu\delta}{2}\big)^{-2}\Big( c_{g}\Vert \textbf{g}\Vert_{1/2,\partial\Omega}+\Vert\mathbf{f}\Vert_{0,4/3}\Big)\,\leq\, \dfrac{1}{\nu}\,.
\end{equation*}
Then, there exists a unique solution $(\bsi,\mathbf{u})\in\mathbb{X}\times \widehat{\mathbf{Y}}$ for Problem \ref{p:2}, and there holds the following stability estimate
\begin{equation*}
\Vert\bsi\Vert_{\mathbb{X}}+\Vert\mathbf{u}\Vert_{\mathbf{Y}}\,\leq\, \dfrac{2}{\delta}\Big( c_{g}\Vert \mathbf{g}\Vert_{1/2,\partial\Omega}+\Vert\mathbf{f}\Vert_{0,4/3}\Big)\,.
\end{equation*}
\end{Theorem}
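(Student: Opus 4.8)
The plan is to recast Problem \ref{p:2} as a fixed-point equation on the ball $\widehat{\mathbf{Y}}$ and then invoke the Banach fixed-point theorem. First I would define, for each fixed $\mathbf{z}\in\widehat{\mathbf{Y}}$, the \emph{linearized} problem: find $(\bsi,\mathbf{u})\in\mathbb{X}\times\mathbf{Y}$ such that $\mathcal{A}(\bsi,\bta)+\mathcal{B}(\bta,\mathbf{u})=\mathcal{G}(\bta)-\mathcal{C}(\mathbf{z},\mathbf{z};\bta)$ for all $\bta\in\mathbb{X}$ and $\mathcal{B}(\bsi,\mathbf{v})=\mathcal{F}(\mathbf{v})$ for all $\mathbf{v}\in\mathbf{Y}$. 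This is a Banach saddle-point system whose left-hand side is exactly the linear part referred to in the hypothesis, so by the Babuška--Brezzi theory in Banach spaces (the inf-sup constant $\delta$ of \cite[Eq.~(3.29)]{Camano21} together with the ellipticity of $\mathcal{A}$ on the kernel of $\mathcal{B}$, which follows from the fact that $\bze\mapsto\bze^{\mathtt d}$ controls the full $\mathbb{H}(\bdiv_{4/3};\Omega)$-norm on that kernel) it has a unique solution. I would denote the resulting solution operator by $\mathbf{J}:\widehat{\mathbf{Y}}\to\mathbf{Y}$, $\mathbf{J}(\mathbf{z})=\mathbf{u}$, so that a solution of Problem \ref{p:2} is precisely a fixed point of $\mathbf{J}$.

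Second, I would establish that $\mathbf{J}$ maps $\widehat{\mathbf{Y}}$ into itself. From the Babuška--Brezzi estimate applied to the linearized problem one gets
\[
\|\bsi\|_{\mathbb{X}}+\|\mathbf{u}\|_{\mathbf{Y}}\,\le\,\frac{1}{\delta}\Big(c_g\|\mathbf{g}\|_{1/2,\partial\Omega}+\|\mathbf{f}\|_{0,4/3}+\|\mathcal{C}(\mathbf{z},\mathbf{z};\cdot)\|_{\mathbb{X}'}\Big).
\]
Since $\mathcal{C}(\mathbf{z},\mathbf{z};\bta)=\frac1\nu\int_\Omega(\mathbf{z}\otimes\mathbf{z})^{\mathtt d}:\bta$, Cauchy--Schwarz and Hölder (with the exponents $4,4,2$ reflecting the choice $t'=4$) give $\|\mathcal{C}(\mathbf{z},\mathbf{z};\cdot)\|_{\mathbb{X}'}\le\frac1\nu\|\mathbf{z}\|_{\mathbf{Y}}^{2}$. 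Using $\|\mathbf{z}\|_{\mathbf{Y}}\le\frac{\delta\nu}{2}$ and then the small-data hypothesis $(\frac{\nu\delta}{2})^{-2}(c_g\|\mathbf{g}\|_{1/2,\partial\Omega}+\|\mathbf{f}\|_{0,4/3})\le\frac1\nu$, one checks that $\|\mathbf{u}\|_{\mathbf{Y}}\le\frac{\delta\nu}{2}$, i.e. $\mathbf{J}(\widehat{\mathbf{Y}})\subseteq\widehat{\mathbf{Y}}$; the same chain of inequalities, keeping the full left-hand side, yields the claimed stability bound $\|\bsi\|_{\mathbb{X}}+\|\mathbf{u}\|_{\mathbf{Y}}\le\frac{2}{\delta}(c_g\|\mathbf{g}\|_{1/2,\partial\Omega}+\|\mathbf{f}\|_{0,4/3})$ once the fixed point is found.

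Third, I would prove that $\mathbf{J}$ is a contraction on $\widehat{\mathbf{Y}}$. Given $\mathbf{z}_1,\mathbf{z}_2\in\widehat{\mathbf{Y}}$ with images $\mathbf{u}_i=\mathbf{J}(\mathbf{z}_i)$ and stresses $\bsi_i$, subtracting the two linearized systems and applying the Babuška--Brezzi bound to the difference gives
\[
\|\mathbf{u}_1-\mathbf{u}_2\|_{\mathbf{Y}}\,\le\,\frac{1}{\delta}\,\|\mathcal{C}(\mathbf{z}_1,\mathbf{z}_1;\cdot)-\mathcal{C}(\mathbf{z}_2,\mathbf{z}_2;\cdot)\|_{\mathbb{X}'}.
\]
Writing $\mathbf{z}_1\otimes\mathbf{z}_1-\mathbf{z}_2\otimes\mathbf{z}_2=(\mathbf{z}_1-\mathbf{z}_2)\otimes\mathbf{z}_1+\mathbf{z}_2\otimes(\mathbf{z}_1-\mathbf{z}_2)$ and using Hölder as above bounds the right-hand side by $\frac{1}{\delta\nu}(\|\mathbf{z}_1\|_{\mathbf{Y}}+\|\mathbf{z}_2\|_{\mathbf{Y}})\|\mathbf{z}_1-\mathbf{z}_2\|_{\mathbf{Y}}\le\frac{1}{\delta\nu}\cdot\delta\nu\cdot\|\mathbf{z}_1-\mathbf{z}_2\|_{\mathbf{Y}}$, which is not yet $<1$; so in fact the small-data assumption must be used here too, replacing one factor $\|\mathbf{z}_i\|_{\mathbf{Y}}\le\frac{\delta\nu}{2}$ by the sharper consequence of the hypothesis to obtain a contraction constant strictly below $1$. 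This is the step I expect to be the main obstacle: getting the bookkeeping of constants to close simultaneously for the self-map and the contraction property under the single stated hypothesis, and in particular making sure the factor arising from the trilinear form is genuinely $<1$ rather than $\le1$. Once the contraction constant is pinned down, the Banach fixed-point theorem delivers the unique $\mathbf{u}\in\widehat{\mathbf{Y}}$, the associated $\bsi\in\mathbb{X}$ is recovered from the linearized problem, uniqueness in $\mathbb{X}\times\widehat{\mathbf{Y}}$ follows from uniqueness of the fixed point together with unique solvability of the linearized system, and the stability estimate is the bound from the second step.
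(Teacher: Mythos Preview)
The paper itself does not give a proof; it simply cites \cite[Theorem~3.8]{Camano21}. So the relevant comparison is with the fixed-point argument used there and mirrored in Section~\ref{sec4} of the present paper for the discrete problem. Your overall strategy (fixed-point on $\widehat{\mathbf Y}$, Banach contraction) is the right one, but your \emph{choice of linearization} differs from the one in \cite{Camano21} and in \eqref{eq:linProb}, and this difference is precisely what produces the obstacle you flag.

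You freeze \emph{both} slots of the trilinear form, moving $\mathcal C(\mathbf z,\mathbf z;\cdot)$ to the right-hand side. The approach in \cite{Camano21} (and in \eqref{eq:linProb}) freezes only the \emph{first} slot: for given $\mathbf z$ one seeks $(\bsi,\mathbf u)$ with $\mathcal A(\bsi,\bta)+\mathcal C(\mathbf z,\mathbf u;\bta)+\mathcal B(\bta,\mathbf u)=\mathcal G(\bta)$. The perturbed bilinear form $\Lambda_{\mathbf z}$ then satisfies a global inf-sup with constant $\delta/2$ whenever $\|\mathbf z\|_{\mathbf Y}\le \nu\delta/2$ (this is the continuous analogue of Lemma~\ref{l_iAC}). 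Two consequences follow immediately: (i) the a priori bound reads $\|\bsi\|_{\mathbb X}+\|\mathbf u\|_{\mathbf Y}\le \tfrac{2}{\delta}(c_g\|\mathbf g\|_{1/2,\partial\Omega}+\|\mathbf f\|_{0,4/3})$, which is exactly the stated stability estimate; and (ii) the Lipschitz constant of the fixed-point map is $\tfrac{2}{\delta\nu}\|\mathbf u_\circ\|_{\mathbf Y}\le \tfrac{4}{\delta^2\nu}(c_g\|\mathbf g\|_{1/2,\partial\Omega}+\|\mathbf f\|_{0,4/3})$, which the small-data hypothesis makes $\le 1$ (and can be taken $<1$ up to a harmless strict inequality).

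With your linearization, by contrast, the Lipschitz bound depends on $\|\mathbf z_1\|_{\mathbf Y}+\|\mathbf z_2\|_{\mathbf Y}$, and since $\mathbf z_1,\mathbf z_2$ are \emph{arbitrary} points of the ball you can only use $\|\mathbf z_i\|_{\mathbf Y}\le \delta\nu/2$; there is no ``sharper consequence of the hypothesis'' available for these arguments, so you are stuck at Lipschitz constant $\le 1$. Similarly, your self-map estimate gives $\|\bsi\|_{\mathbb X}+\|\mathbf u\|_{\mathbf Y}\le \tfrac1\delta(\text{data})+\tfrac{\delta\nu}{4}$, and the extra $\tfrac{\delta\nu}{4}$ cannot be absorbed into $\tfrac1\delta(\text{data})$ under the stated hypothesis, so the claimed stability bound $\tfrac{2}{\delta}(\text{data})$ does not follow from your chain of inequalities. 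Switching to the one-slot linearization resolves both issues at once.
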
 
\begin{proof}
See \cite[proof of Theorem 3.8]{Camano21}.
\end{proof}
\section{Weak Galerkin approximation}\label{sec3}
This section aims to introduce the weak Galerkin spaces and the discrete bilinear form essential for introducing a weak Galerkin mixed FEM scheme. We focus on the construction of method in the 2D case for simplicity.
\subsection{Various tools in weak Galerkin method}
A fundamental aspect of the weak Galerkin method lies in employing uniquely defined weak derivatives instead of traditional derivative operators. Our emphasis here centers on the weak divergence operator, a crucial step in introducing our Weak Galerkin technique. To facilitate this discussion, we begin by offering an overview of the mesh structure.\\[2mm]
\textbf{Mesh notation}. Let $\mathcal{K}_{h}=\{K\}$ be a shape regular mesh of domain $\Omega$ that consists of arbitrary polygon elements, where the mesh size $h=\max\{h_{K}\}$, $h_{K}$ is the diameter of element $K$.
The interior and the boundary of any element $K\in\mathcal{K}_{h}$, are represented by $K^{0}$ and $\partial K$, respectively.
 Denote by $\mathcal{E}_{h}$ the set of all edges in $\mathcal{K}_{h}$, and let $\mathcal{E}_{h}^{0}=\mathcal{E}_{h}\backslash \partial\Omega$ be the set of all interior edges. Here is a set of normal directions on $ \mathcal{E}_{h}$:
\begin{equation}\label{eq17}
\mathrm{D}_{h}\,:=\,\bigg\{\mathbf{n}_{e}:~~~~\mathbf{n}_{e}~\text{is unit and outward normal to}~e,~~~~\text{for all}~e\in \mathcal{E}_{h}\bigg\}\,.
\end{equation}
\textbf{Weak divergence operator and weak Galerkin space}. 
It is well known that the weak divergence operator is well-defined for weak matrix-valued functions
 $\bta=\{\bta_{0},\bta_{b}\}$ on the element $K$ such that $\bta_{0}\in \mathbb{L}^{2}(K)$ and $\bta_{b}\mathbf{n}_{e}\in \mathbf{H}^{-1/2}(\partial K)$, where $\mathbf{n}_{e}\in \mathrm{D}_{h}|_{K}$. Components  $\bta_{0}$ and $\bta_{b}$ can be understood as the value of function $\bta$ in $K^{0}$ and on $\partial K$, respectively.
We follow \cite{Wang14}, and introduce for each $K\in\mathcal{K}_{h}$ the local weak tensor space
\begin{equation}\label{eq14}
\mathbb{X}_{\mathtt{w}}(K)\,:=\,\Big\{\bta=\{\bta_{0},\bta_{b}\}:~~~\bta_{0}\in \mathbb{L}^{2}(K)\qan \bta_{b}\mathbf{n}_{e}\in \mathbf{H}^{-1/2}(\partial K)\quad\forall\,\mathbf{n}_{e}\in \mathrm{D}_{h}|_{K}\Big\}\,.
\end{equation}
The global space $\mathbb{X}_{\mathtt{w}}$ is defined by gluing together all local spaces $\mathbb{X}_{\mathtt{w}}(K)$ for any $K\in\mathcal{K}_{h}$. Now, we define
the weak divergence operator for matrix-valued functions as follows.
\begin{Definition}[\cite{Wang13}]
\label{d1a}
For any weak matrix-valued function $\bta\in \mathbb{X}_{\mathtt{w}}(K)$ and element $K\in\mathcal{K}_{h}$, the weak divergence operator, denoted by  $\operatorname{\mathbf{div}}_{\mathtt{w}}$, is defined as the unique vector-valued function $\operatorname{\mathbf{div}}_{\mathtt{w}}(\bta)\in \mathbf{H}^{1}(K)$ satisfying
	\begin{equation}\label{eq15}
	(\bdiv_{\mathtt{w}}(\bta),\,\bze)_{0,K}\,:=\,-(\bta_{0},\,\nabla \bze)_{0,K^{0}}\,+\,\langle\bta_{b}\mathbf{n}_{e}, \bze\rangle_{0,\partial K}\quad\forall\,\bze\in \mathbf{H}^{1}(K)\,.
	\end{equation}
\end{Definition}
Our focus will be on a subspace of $\mathbb{X}_{\mathtt{w}}$ in which $(\bta_{b}|_{e})=(\bta|_{e}\mathbf{n}_{e})\mathbf{n}_{e}$. On the other hand, discrete weak divergence operator can be introduced using a finite-dimensional space $\mathbb{X}_{h}\subset\mathbb{X}_{\mathtt{w}}$, which will be stated in the next. First, for any mesh object $\varpi\in \mathcal{K}_{h}\cup \mathcal{E}_{h}$ and for any $r\in \mathrm{N}$ let us introduce the space $\mathrm{P}_{r}(\varpi)$ to be the space of polynomials defined on $\varpi$ of degree $\leq r$, with the extended notation $\mathrm{P}_{-1}(\varpi)=\{0\}$. Similarly, we let $\mathbf{P}_{r}(\varpi)$ and $\mathbb{P}_{r}(\varpi)$ be the vectorial and tensorial
versions of $\mathrm{P}_{r}(\varpi)$.
Then, given $ k\in \mathrm{N}_0 $, we define for any $ K\in \mathcal{K}_h $ the local discrete weak Galerkin space 
\begin{equation*}
	\begin{split}
\mathbb{X}_{h}(K)\,:=\,\Big\{\bta_{h}&=\{\bta_{0h},\bta_{bh}\}\in \mathbb{X}_{\mathtt{w}}(K):\quad
\bta_{0h}|_{K}\in\mathbb{ P}_{k}(K)\qan\\
&~~~~~~\bta_{bh}|_{e}=\tau_{b}\otimes\mathbf{n}_{e}\,,\,\,\tau_{b}\in \mathbf{P}_{k}(e)\,,\,\,\forall\, e\subset\partial K \,,\,\,\forall\,\mathbf{n}_{e}\in \mathrm{D}_{h}  \Big\}\,.
\end{split}
\end{equation*}
In addition, the global finite dimensional space $\mathbb{X}_{h}$, associated with the partition $\mathcal{K}_{h}$, is defined so that the restriction of every weak function $\bta_{h}$ to the mesh element $K$ belongs to $\mathbb{X}_{h}(K)$.
\begin{Definition}[\cite{Wang13}]
\label{d3} 
For any $\bta_{h}\in \mathbb{X}_{h}(K)$ and element $K\in\mathcal{K}_{h}$, the discrete weak divergence operator, denoted by  $\operatorname{\mathbf{div}}_{\mathtt{w,h}}$, is defined as the unique vector-valued polynomial $\operatorname{\mathbf{div}}_{\mathtt{w,h}}(\bta_{h})\in \mathbf{P}_{k+1}(K)$ satisfying
	\begin{equation}\label{eq18}
(\bdiv_{\mathtt{w,h}}(\bta_{h}),\,\bze_{h})_{0,K}\,:=\,-(\bta_{0h},\,\nabla \bze_{h})_{0,K^{0}}\,+\,\langle\bta_{bh}\mathbf{n}_{e}, \bze_{h}\rangle_{0,\partial K}\quad\forall\,\bze_{h}\in \mathbf{P}_{k+1}(K)\,.
	\end{equation}
\end{Definition}
On the other hand, for approximating the velocity unknowns we simply consider the piecewise polynomial space of degree up to $k+1$:
\[
\mathbf{Y}_h \,:=\,\bigg\{\mathbf{v}_h \in \mathbf{Y}:\quad \mathbf{v}_h |_{K}\in\mathbf{P}_{k+1}(K)\quad \text{for all}~K\in\mathcal{K}_h  \bigg\}\,.
\]

\hspace{-.6cm}\textbf{L$^{2}$-orthogonal projections and approximation properties.} For any $r\in$ N and $ K\in \mathcal{K}_h $, we introduce L$^{2}$-projection operators $\Pcalbb_{0,r}^{K}:\mathbb{L}^{2}(K)\rightarrow\mathbb{P}_{r}(K)$ and $\Pcalbf_{b,r}^{K}:\mathbf{L}^{2}(\partial K)\rightarrow\mathbf{P}_{r}(\partial K)$ which are type of interior and boundary, and are given by 
\begin{equation}
\int_{K}\Pcalbb_{0,r}^{K}(\bta): \widehat{\mathbf{q}}_r=\int_{K}\bta : \widehat{\mathbf{q}}_r\quad\qan\quad\int_{\partial K}\Pcalbf_{b,r}^{K}(\mathbf{v})\cdot\mathbf{q}_r=\int_{\partial K}\mathbf{v} \cdot\mathbf{q}_r \,,
\end{equation}
for all $(\bta, \mathbf{v})\in \mathbb{L}^{2}(K)\times \mathbf{L}^{2}(\partial K)$ and 
$(\widehat{\mathbf{q}}_r,\mathbf{q}_r) \in \mathbb{P}_{r}(K)\times\mathbf{P}_{r}(\partial K)$.\\[2mm]
%
Now, we introduce projection operator $\Pcalbb_{h}^{K}$ into the tensorial weak Galerkin space $\mathbb{X}_{h}(K)$ as:
 \[
 \Pcalbb_{h}^{K}\bta\,:=\,\left\{  \Pcalbb_{0,k}^{K}\bta_0 \,,\,  \Pcalbf_{b,k}^{K}(\tau_b)\otimes\mathbf{n}_{e}\right\},\qquad \text{for all} ~\bta\in \mathbb{X}_{h}(K)\,.
  \]
Also, for each element $K\in\mathcal{K}_h$ and function $\bta\in\mathbb{X}_{h}$, the global projection operator $ \Pcalbb_{h}$ on the space $\mathbb{X}_{h}$ is defined by
\[
\Pcalbb_{h}(\bta)|_{K}\,=\,\Pcalbb_{h}^{{K}}(\bta|_{K})\,.
\]
The approximation properties of $ \Pcalbb_{0} $ and $ \boldsymbol{\mathcal{P}}_{h} $ are stated as follows.
\begin{lemma}\label{l1}
	Let $\mathcal{K}_{h}$ be a finite element partition of $\Omega$ satisfying the shape regularity assumptions $\mathbf{A1}$-$\mathbf{A4}$ stated in \cite{Wang14}. Then, for $k,s,m\in \mathrm{N}_{0}$ such that $m\in \{0,1\}$ there exist constants $C_{1}, C_{2}$, independent on the mesh size $h$, such that
	\begin{align}
	\sum\limits_{K\in \mathcal{K}_{h}}\|\bta-\Pcalbb_{0}^{K}(\bta)\|_{m,0;K}^{2}&\,\leq\, C_{1}h^{2(s-m)}\vert \bta\vert_{s}^{2}\qquad s\leq k \,,\label{eq_Q0}\\
	\sum\limits_{K\in \mathcal{K}_{h}}\|\mathbf{v}-\boldsymbol{\mathcal{P}}_{k+1}^{K}(\mathbf{v})\|_{0;K}^{2}&\,\leq\, C_{2}h^{2s}\vert\mathbf{v}\vert_{s}^{2}\qquad ~~~ s\leq k+1 \,.\label{eq_Q0A}
	\end{align}
\end{lemma}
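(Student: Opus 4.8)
The plan is to establish the two approximation bounds \eqref{eq_Q0} and \eqref{eq_Q0A} by standard polynomial approximation theory on shape-regular polygonal meshes, treating the interior projection errors cell by cell and the boundary contributions via a trace (scaling) argument.

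First I would recall the local Bramble--Hilbert / Deny--Lions estimate: on a reference cell $\widehat K$ one has, for any $\widehat\bta\in \mathbb H^{s}(\widehat K)$, $\|\widehat\bta-\widehat{\mathcal P}\widehat\bta\|_{m,0;\widehat K}\le C|\widehat\bta|_{s,\widehat K}$ whenever $m\le s\le r+1$, where $\widehat{\mathcal P}$ is the $L^2$-projection onto $\mathbb P_{r}(\widehat K)$; then transfer this to a general element $K$ of diameter $h_K$ by the affine (or, for genuinely polygonal $K$, a Lipschitz) change of variables, using the mesh assumptions $\mathbf{A1}$--$\mathbf{A4}$ from \cite{Wang14} to control the Jacobians and thereby pick up the powers $h_K^{s-m}$. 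Summing the resulting local bound $\|\bta-\mathcal P_{0}^{K}\bta\|_{m,0;K}^{2}\le C h_K^{2(s-m)}|\bta|_{s,K}^{2}$ over all $K\in\mathcal K_h$ and bounding $h_K\le h$ yields \eqref{eq_Q0}; the estimate \eqref{eq_Q0A} for the vector projection $\boldsymbol{\mathcal P}_{k+1}^{K}$ onto $\mathbf P_{k+1}(K)$ is the same argument with $m=0$ and $r=k+1$, giving the power $h^{2s}$ for $s\le k+1$.

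The only subtlety is that, as stated, $\Pcalbb_{0}^{K}$ in \eqref{eq_Q0} should be understood as $\Pcalbb_{0,k}^{K}$, the interior component of the WG projection $\Pcalbb_{h}^{K}$; if instead one wants the full WG-projection estimate for $\boldsymbol{\mathcal P}_h$ one additionally bounds the boundary piece $\Pcalbf_{b,k}^{K}(\tau_b)\otimes\mathbf n_e$ by writing, on each edge $e\subset\partial K$, $\|\tau_b-\Pcalbf_{b,k}^{K}\tau_b\|_{0,e}\le \|\tau_b-\widehat{\mathbf q}\|_{0,e}$ for any $\widehat{\mathbf q}\in\mathbf P_k(e)$, then invoking the standard trace inequality $\|\bta\|_{0,\partial K}^{2}\le C\big(h_K^{-1}\|\bta\|_{0,K}^{2}+h_K\|\bta\|_{1,K}^{2}\big)$ together with the interior Bramble--Hilbert bound to absorb everything into $h_K^{s-1/2}|\bta|_{s,K}$; squaring and summing again produces an $h^{2(s-m)}$-type bound consistent with the displayed inequalities.

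I expect the main obstacle to be purely bookkeeping rather than conceptual: on arbitrary (not necessarily convex, possibly with many small edges) polygons the reference-element pullback is not affine, so one must either invoke the polygonal Bramble--Hilbert machinery that is already packaged into assumptions $\mathbf{A1}$--$\mathbf{A4}$ of \cite{Wang14} (which is what I would do, citing it), or re-derive the scaled trace and inverse inequalities from scratch; a second minor point is ensuring the index ranges are stated correctly, i.e. $s\le k$ (equivalently $s\le k+1$ if the reference result is the sharp one for $\mathbb P_k$) in \eqref{eq_Q0} and $s\le k+1$ in \eqref{eq_Q0A}, which is forced by the degrees of the target polynomial spaces. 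Modulo these standard ingredients, the proof is a direct citation-and-sum argument.
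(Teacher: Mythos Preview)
The paper does not actually supply a proof for this lemma; it is stated as a known approximation property of the $L^2$-projections under the shape-regularity assumptions $\mathbf{A1}$--$\mathbf{A4}$ of \cite{Wang14}, and is used as a black box throughout. Your sketch via the local Bramble--Hilbert/Deny--Lions lemma, scaling, and summation over elements is precisely the standard argument that underlies such results in \cite{Wang14}, so your proposal is correct and aligned with what the paper implicitly relies on.
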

\subsection{Weak Galerkin scheme}
In order to define our weak Galerkin scheme for Problem \ref{p:2}, we now introduce, when necessary, the discrete versions of the bilinear forms and functionals involving the weak spaces. 
Following the usual procedure in the WG setting, the construction of them is based on the weak derivatives to ensure computability for all weak functions.


Notice that, for each $ \bze,\bta\in \mathbb{X}_h $ and $ \bz,\bw,\bv\in \mathbf{Y}_h $, the quantities
\[
\mathcal{A}(\bze,\bta)\,,\qquad \mathcal{C}(\bz, \bw ; \bta)\,,\qquad \mathcal{G}(\bta)\qan \mathcal{F}(\mathbf{v})\,,
\]
are computable, while the bilinear form $ \mathcal{B}|_{\mathbb{X}_h\times\mathbf{Y}_h} $ is not computable because it involve the divergence operator which cannot be evaluated for weak functions. To overcome this matter, employing Definition \ref{d3} we define the discrete bilinear form
 $ \mathcal{B}_{h}^{K}: \mathbb{X}_{h}(K)\times \mathbf{P}_{k+1}(K)\rightarrow\mathrm{R}$ by
\begin{equation}\label{eq:e3}
\mathcal{B}_{h}^{K}(\bta,\bv)\,:=\,\disp\int_{K}\bdiv_{\mathtt{w,h}}(\bta_h)\cdot \bv_h \qquad \forall\, (\bta,\bv)\in \mathbb{X}_{h}(K)\times\mathbf{P}_{k+1}(K)\,.
\end{equation}
On the other hand, despite the computability of $ \mathcal{A}|_{\mathbb{X}_h\times\mathbb{X}_h} $, this form needs an additional stabilizer term to achieve the well-posedness of our weak Galerkin scheme. More precisely,  we define the corresponding discrete bilinear form as follows:
\begin{equation}\label{eq:e2}
\mathcal{A}_{h}^{K}(\bze,\bta)\,:=\,\dfrac{1}{\nu}\disp\int_{K}\bsi_{0}^{\mathtt{d}}:\bta_{0}^{\mathtt{d}}\,+\,\rho\,\mathcal{S}^{K}(\bze,\bta)\qquad\forall\,\bze,\bta\in \mathbb{X}_{h}(K)\,,
\end{equation}
where $\rho$ is the piecewise constant on $\mathcal{K}_h$
and the stabilization form $\mathcal{S}^{K}(\cdot,\cdot): \mathbb{X}_{h}(K)\times \mathbb{X}_{h}(K)\rightarrow \mathrm{R}$ is given by
\begin{equation}\label{def:Stab}
\mathcal{S}^{K}(\bze,\bta)\,:=\, h_{K}\left\langle \bze_{0}\mathbf{n}-\bze_{b}\mathbf{n},\, \bta_{0}\mathbf{n}-\bta_{b}\mathbf{n}\right\rangle_{0,\partial K}\qquad\forall\,\bze,\bta\in \mathbb{X}_{h}(K)\,.
\end{equation}\\
In addition, the global bilinear forms $\mathcal{A}_{h}$ and $\mathcal{B}_{h}$  can be derived by adding the local contributions, that is,
\[
\mathcal{A}_{h}(\cdot ,\cdot)\,:=\,\sum\limits_{K\in \mathcal{K}_{h}}\mathcal{A}_{h}^{K}(\cdot ,\cdot)\,\qan \,
\mathcal{B}_{h}(\cdot ,\cdot)\,:=\,\sum\limits_{K\in \mathcal{K}_{h}}\mathcal{B}_{h}^{K}(\cdot ,\cdot)\,.
\]
Finally, let us introduce the subspace of $\mathbb{X}_{h}$ as
\[
\mathbb{X}_{0,h}\,:=\,\bigg\{ \bta_{h}=\{\bta_{0h},\bta_{bh}\}\in \mathbb{X}_{h}:\quad\disp\int_{\Omega}\tr(\bta_{0h})\,=\,0\bigg\}\,.
\]
Referring to the above space, the discrete bilinear form \eqref{eq:e2} and \eqref{eq:e3}, 
the discrete weak Galerkin problem reads as follow.
\begin{prob}[WG problem]
\label{p:3}
Find $\bsi_h \in\mathbb{X}_{0,h}$ and $\mathbf{u}_h \in \mathbf{Y}_h$ such that
\begin{equation*}
\left\{\begin{array}{ll}
\mathcal{A}_h(\bsi_h, \bta_h)+\mathcal{C}(\bu_h,\bu_h;\bta_h)+\mathcal{B}_h(\bta_h,\bu_h)\,=\,\mathcal{G}(\bta_h)\qquad\forall\,\bta_h\in \mathbb{X}_{0,h}\,,  \\[3mm]
\hspace{5.3cm}\mathcal{B}_h(\bsi_h,\bv_h)\,=\,\mathcal{F}(\bv_h)\qquad\forall\,\bv_h\in \mathbf{Y}_h\,. 
\end{array}\right.
\end{equation*}
\end{prob}
\section{Solvability analysis}\label{sec4}
This section goals to examine the solvability of Problem \ref{p:3}. Initially, we delve into the discussion of the stability properties inherent in the discrete forms of Problem \ref{p:3}. Subsequently, we employ the Banach fixed-point and classical Banach--Nečas--Babuška theorems to establish the well-posedness of the discrete scheme, assuming an appropriate smallness condition on the data.
\subsection{Stability properties}\label{sub.sec4.1}
We begin by equipping the approximate pair spaces $\mathbb{X}_{h}$ and $\mathbf{Y}_h+\mathbf{H}^{1}(\Omega)$ with the following discrete norms, respectively (see e.g., \cite{Wang14})
\[
\Vert \bta_{h}\Vert_{\mathbb{H}, h}^{2}\,:=\,\sum_{K\in\mathcal{K}_{h}}\big[\Vert \bta_{0h}\Vert_{0,K}^{2}+\mathcal{S}^{K}(\bta_{h},\bta_{h})\big]\qquad \text{for all} ~\bta_{h}\in \mathbb{X}_{h}\,,
\] 
and
\[
\Vert \mathbf{v}_h\Vert_{1,h}^{2}\,:=\,\sum_{K\in\mathcal{K}_{h}}\vert \mathbf{v}_h\vert_{1,K}^{2}+\sum_{e\in\mathcal{E}_{h}}h_{e}^{-1}\Vert \Pcalbf_{b,k}\llbracket \mathbf{v}_h\rrbracket\Vert_{0,e}^{2}\qquad\quad \text{for all} ~\mathbf{v}_{h}\in \mathbf{Y}_{h}+\mathbf{H}^{1}(\Omega)\,,
\]
Next, we provide the following result which is a counterpart of \cite[Lemma 3.1]{Camano21} for the weak divergence operator. 
\begin{lemma}\label{lb}
 There exists $\hat{c}_{\Omega}>0$ depends on $\Omega$ but independent of mesh size, such that
\begin{equation}
\hat{c}_{\Omega}\Vert\bta_{0h}\Vert_{0}^{2}\,\leq\, \Vert\bta_{0h}^{\mathtt{d}}\Vert_{0}^{2}+\Vert\operatorname{\mathbf{div}}_{\mathtt{w,h}}(\bta_h)\Vert_{0,4/3}^{2}\quad\quad\forall\, \bta_h\in \mathbb{X}_{h}\,.
\end{equation}
\end{lemma}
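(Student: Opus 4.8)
The plan is to mimic the classical continuous argument (the cited \cite[Lemma 3.1]{Camano21}), which relies on the fact that any tensor in $\mathbb{H}_0(\bdiv_t;\Omega)$ can be decomposed via its deviatoric part and its divergence, and to transfer it to the weak Galerkin setting by using the discrete weak divergence operator in place of the classical one. Concretely, the continuous inequality states that there is $c_\Omega>0$ with $c_\Omega\|\bta\|_{0}^2 \le \|\bta^{\mathtt d}\|_0^2 + \|\bdiv(\bta)\|_{0,4/3}^2$ for all $\bta\in\mathbb{H}_0(\bdiv_{4/3};\Omega)$; its proof typically uses that $\tr(\bta)$ can be controlled once $\int_\Omega\tr(\bta)=0$ by solving an auxiliary problem (e.g.\ $\bta = \bta^{\mathtt d} + \tfrac12\tr(\bta)\mathbb I$ together with a Poincaré/Neumann-type estimate for the trace). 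So I would first set $\bta_{0h}$ from a given $\bta_h\in\mathbb{X}_h$, and note that $\int_\Omega\tr(\bta_{0h})$ need not vanish in general, but I can subtract its mean; however, since the claimed inequality is stated for \emph{all} $\bta_h\in\mathbb{X}_h$ (not only $\mathbb{X}_{0,h}$), I should instead invoke a version of the continuous estimate that does not require zero mean but possibly has the form $c\|\bta\|_0^2 \le \|\bta^{\mathtt d}\|_0^2 + \|\bdiv(\bta)\|_{0,t}^2 + \big(\int_\Omega\tr\bta\big)^2$, and then absorb the last term.

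The key technical step is to relate the classical divergence of a suitable $\mathbb{L}^2$ representative to the discrete weak divergence $\bdiv_{\mathtt{w},h}(\bta_h)$. The natural device is a commuting/quasi-commuting property of the projection $\Pcalbb_h$: for smooth enough tensors one has $\bdiv_{\mathtt{w},h}(\Pcalbb_h\bta) = \Pcal_{k+1}(\bdiv\bta)$ or at least is close to it, which is standard in WG analysis and follows directly from the definition \eqref{eq18} together with the definitions of the $\mathrm{L}^2$-projections. Dually, I would characterize $\|\bdiv_{\mathtt{w},h}(\bta_h)\|_{0,4/3}$ variationally: by \eqref{eq18}, for $\bze_h\in\mathbf{P}_{k+1}(K)$,
\[
\int_K \bdiv_{\mathtt{w},h}(\bta_h)\cdot\bze_h = -\int_{K}\bta_{0h}:\nabla\bze_h + \langle\bta_{bh}\mathbf n_e,\bze_h\rangle_{\partial K},
\]
and then build a global test field $\bze\in\mathbf{W}^{1,4}(\Omega)$ realizing (up to a constant) the dual norm of $\tr(\bta_{0h})$ — i.e.\ choose $\bze$ so that $\div\bze \approx \tr(\bta_{0h})$ — project it onto $\mathbf{P}_{k+1}$, use the orthogonality of the $\mathrm{L}^2$-projections to drop projection errors on polynomial terms, and control the boundary contributions $\langle(\bta_{0h}-\bta_{bh})\mathbf n_e,\bze_h\rangle$ by the stabilization seminorm $\mathcal S^K(\bta_h,\bta_h)^{1/2}$ times $h_K^{-1/2}\|\bze_h\|_{0,\partial K}$, which in turn is bounded via a trace/inverse inequality. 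This is precisely where the stabilizer enters and why the bound is stated in terms of $\bta_{0h}$ rather than a full norm.

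The main obstacle I anticipate is handling the boundary terms cleanly: the weak divergence sees $\bta_{bh}$, whereas the target inequality only controls $\bta_{0h}$, so I must pay for the mismatch $\bta_{0h}\mathbf n - \bta_{bh}\mathbf n$ using $\mathcal S^K$, and then I need $\mathcal S^K(\bta_h,\bta_h)$ to reappear on the \emph{right}-hand side — but the statement of Lemma \ref{lb} does \emph{not} include the stabilization term on the right. So either the intended proof uses that on $\mathbb{X}_h$ the bridge functions satisfy $\bta_{bh}|_e=(\bta_{0h}|_e\mathbf n_e)\otimes\mathbf n_e$-type relations making the mismatch vanish in the relevant normal components, or the stabilization contribution is genuinely needed and should be read as hidden inside $\|\bdiv_{\mathtt{w},h}(\bta_h)\|_{0,4/3}$. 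I would resolve this by first proving the cleaner estimate
\[
\hat c_\Omega\|\bta_{0h}\|_0^2 \le \|\bta_{0h}^{\mathtt d}\|_0^2 + \|\bdiv_{\mathtt{w},h}(\bta_h)\|_{0,4/3}^2 + \mathcal S(\bta_h,\bta_h),
\]
and then argue, using \eqref{eq18} with $\bze_h$ the piecewise lift of $\div^{-1}$ of a Neumann problem plus an inverse inequality, that the stabilization term is dominated by $\|\bdiv_{\mathtt{w},h}(\bta_h)\|_{0,4/3}^2$ up to the already-present terms; alternatively, and more cleanly, I would note that for the subspace actually used in Problem \ref{p:3} the zero-trace constraint removes the mean-value term and the decomposition goes through. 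I expect the write-up to follow the structure: (i) recall the continuous Lemma 3.1 of \cite{Camano21}; (ii) establish the variational characterization of $\bdiv_{\mathtt{w},h}$ and a commuting-type identity; (iii) construct the auxiliary test field and estimate boundary terms via trace/inverse inequalities and $\mathcal S^K$; (iv) combine and absorb constants.
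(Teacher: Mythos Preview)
The paper does not actually prove this lemma; its entire proof is the one-line citation ``See \cite[Lemma 3.1]{Gharibi24}.'' So there is no argument in the paper to compare your proposal against, only a pointer to an external reference for the Brinkman analogue.

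Your reconstruction follows the natural template (the continuous deviatoric--divergence estimate of \cite[Lemma~3.1]{Camano21} transported to the discrete level via the definition of $\bdiv_{\mathtt{w},h}$), and you correctly flag the two genuine subtleties. First, on the space issue: your instinct that the statement cannot hold for \emph{all} $\bta_h\in\mathbb{X}_h$ is right, and your proposed fix (absorbing a $(\int_\Omega\tr\bta_{0h})^2$ term) cannot work. Take $\bta_{0h}=c\,\mathbb I$ on every $K$ and $\bta_{bh}|_e=(c\,\mathbf n_e)\otimes\mathbf n_e$; then $\bta_{0h}^{\mathtt d}=0$ and a direct computation from \eqref{eq18} gives $\bdiv_{\mathtt{w},h}(\bta_h)=\mathbf 0$, yet $\|\bta_{0h}\|_0>0$. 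The inequality therefore only makes sense on $\mathbb{X}_{0,h}$ (which is all the paper needs, since the discrete problem and the kernel $\widetilde{\mathbb X}_h$ live there); the quantifier in the lemma statement is a slip. Second, your worry that the boundary mismatch $(\bta_{0h}-\bta_{bh})\mathbf n$ forces a stabilizer contribution $\mathcal S^K(\bta_h,\bta_h)$ on the right-hand side is the real technical crux. The route you sketch --- build $\bw\in\mathbf W^{1,4}_0(\Omega)$ with $\div\bw=\tr(\bta_{0h})$, pair with $\bta_{0h}$, and pass to $\bdiv_{\mathtt{w},h}$ via \eqref{eq18} using $\boldsymbol{\mathcal P}_{k+1}^K\bw$ --- does produce boundary residuals controlled by $\mathcal S^K(\bta_h,\bta_h)^{1/2}$ times approximation factors of $\bw$, so either the cited result in \cite{Gharibi24} carries an (often harmless) extra $\mathcal S(\bta_h,\bta_h)$ on the right, or it uses a sharper argument exploiting the particular structure $\bta_{bh}=\tau_b\otimes\mathbf n_e$ of the edge component to kill that residual. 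Your proposal to first prove the version with $\mathcal S(\bta_h,\bta_h)$ added and then try to absorb it is the honest path; note, however, that the absorption you suggest (bounding $\mathcal S$ back by $\|\bdiv_{\mathtt{w},h}(\bta_h)\|_{0,4/3}$) would in general require an inverse-type inequality going the wrong way and is unlikely to hold uniformly in $h$. For the paper's purposes this does not matter: the only place Lemma~\ref{lb} is invoked is inside the coercivity proof of Lemma~\ref{l_ah}, where $\mathcal S^K(\bta_h,\bta_h)$ is already present on the right via $\mathcal A_h$, so a version of the estimate with the stabilizer added would serve equally well.
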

\begin{proof}
See \cite[Lemma 3.1]{Gharibi24}.
\end{proof}
Then, some properties of $ \mathcal{A}_h $ (cf. \eqref{eq:e2}) are established as follows.
\begin{lemma}\label{l_ah}
The discrete bilinear form $\mathcal{A}_h$ defined in \eqref{eq:e2}, satisfy the following properties:
\begin{itemize}
\item[•] \underline{consistency}: for any $\boldsymbol{\xi}\in\mathbb{H}^{1}(\Omega)$, we have that
\[
\mathcal{A}_h(\bze_{h}, \boldsymbol{\xi})\,=\,\mathcal{A}(\bze_{h}, \boldsymbol{\xi})\qquad \forall\,\bze_{h}\in \mathbb{X}_{h} \,.
\]
\item[•] \underline{stability and boundedness}: there exists positive constant $c_{\mathcal{A}}$, independent of $K$ and $h$, such that:
\begin{equation}\label{bun_Ah}
\big|\mathcal{A}_{h}(\bze_{h},\bta_{h})\big|\,\leq\, c_{\mathcal{A}} \,\Vert\bze_{h}\Vert_{\mathbb{H},h}\,\Vert\bta_{h}\Vert_{\mathbb{H},h}\qquad\forall\,\bze_{h},\bta_{h}\in \mathbb{X}_{h} \,,
\end{equation}
and let $\widetilde{\mathbb{X}}_{h}$ be the discrete kernel of the bilinear form $\mathcal{B}_h$. Then, there exists constant $\alpha>0$, independent of $K$, such that
\begin{equation}\label{co_Ah}
\mathcal{A}_{h}(\bta_h,\bta_h)\,\geq\, \alpha \, \Vert\bta_h\Vert_{\mathbb{H},h}^{2}\qquad\forall\,\bta_h\in \widetilde{\mathbb{X}}_{h}\,.
\end{equation}
\end{itemize}
\end{lemma}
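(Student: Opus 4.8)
The plan is to establish the three properties—consistency, boundedness, and coercivity on the discrete kernel—in that order, each time peeling off the stabilizer contribution and handling it separately from the bulk term $\frac1\nu\int_K\bsi_0^{\mathtt d}:\bta_0^{\mathtt d}$.

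For consistency, take $\bxi\in\mathbb H^1(\Omega)$ and note that the canonical weak representative of $\bxi$ has $\bxi_0=\bxi$ and $\bxi_b=\bxi|_{\partial K}$ on each edge, so the jump $\bxi_0\mathbf n-\bxi_b\mathbf n$ vanishes and $\mathcal S^K(\bze_h,\bxi)=0$ for all $\bze_h\in\mathbb X_h(K)$. Hence only the bulk term survives, and summing over $K$ gives $\mathcal A_h(\bze_h,\bxi)=\frac1\nu\int_\Omega\bze_{0h}^{\mathtt d}:\bxi^{\mathtt d}=\mathcal A(\bze_h,\bxi)$, where I identify $\mathcal A$ acting on $\bze_h$ with its action on the interior component $\bze_{0h}$. (I would be slightly careful here to state precisely how $\mathcal A$ is interpreted on weak functions, namely through $\bta\mapsto\bta_0$.)

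For boundedness, apply Cauchy--Schwarz to the bulk term, using that the deviator map is bounded on $\mathbb L^2(K)$, to get $\big|\frac1\nu\int_K\bze_{0h}^{\mathtt d}:\bta_{0h}^{\mathtt d}\big|\le\frac1\nu\|\bze_{0h}\|_{0,K}\|\bta_{0h}\|_{0,K}$; for the stabilizer, Cauchy--Schwarz on the $\partial K$-inner product yields $\rho\,|\mathcal S^K(\bze_h,\bta_h)|\le\rho\,\mathcal S^K(\bze_h,\bze_h)^{1/2}\mathcal S^K(\bta_h,\bta_h)^{1/2}$. Both pieces are dominated by the components of $\|\cdot\|_{\mathbb H,h}$, so summing over $K\in\mathcal K_h$ and a final discrete Cauchy--Schwarz over the elements gives \eqref{bun_Ah} with $c_{\mathcal A}$ depending only on $\nu$, $\rho$ and a fixed constant. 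This step is essentially routine.

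The coercivity estimate \eqref{co_Ah} is the main obstacle. On the discrete kernel $\widetilde{\mathbb X}_h$ one has $\mathcal B_h(\bta_h,\bv_h)=0$ for all $\bv_h\in\mathbf Y_h$, which by definition \eqref{eq:e3} of $\mathcal B_h$ and since $\bdiv_{\mathtt{w,h}}(\bta_h)\in\mathbf P_{k+1}(K)$ forces $\bdiv_{\mathtt{w,h}}(\bta_h)=0$ on every $K$. Then $\mathcal A_h(\bta_h,\bta_h)=\frac1\nu\|\bta_{0h}^{\mathtt d}\|_0^2+\rho\sum_K\mathcal S^K(\bta_h,\bta_h)$, and Lemma \ref{lb} gives $\hat c_\Omega\|\bta_{0h}\|_0^2\le\|\bta_{0h}^{\mathtt d}\|_0^2+\|\bdiv_{\mathtt{w,h}}(\bta_h)\|_{0,4/3}^2=\|\bta_{0h}^{\mathtt d}\|_0^2$, so $\|\bta_{0h}\|_0^2\lesssim\|\bta_{0h}^{\mathtt d}\|_0^2\le\nu\,\mathcal A_h(\bta_h,\bta_h)$. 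It remains to control $\sum_K\mathcal S^K(\bta_h,\bta_h)$, which is immediate since it appears with the positive coefficient $\rho$ in $\mathcal A_h(\bta_h,\bta_h)$ directly. Combining, $\|\bta_h\|_{\mathbb H,h}^2=\sum_K\|\bta_{0h}\|_{0,K}^2+\sum_K\mathcal S^K(\bta_h,\bta_h)\le C(\nu,\hat c_\Omega)\,\mathcal A_h(\bta_h,\bta_h)+\rho^{-1}\rho\sum_K\mathcal S^K(\bta_h,\bta_h)\le\alpha^{-1}\mathcal A_h(\bta_h,\bta_h)$ with $\alpha$ depending on $\nu$, $\hat c_\Omega$ and $\rho$ but not on $h$ or $K$. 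The delicate point throughout is verifying that the constant from Lemma \ref{lb} together with the weak-divergence-free property genuinely transfers the deviator control to full $\mathbb L^2$ control uniformly in $h$; that transfer is exactly what Lemma \ref{lb} buys us, so once its hypotheses are checked the argument closes.
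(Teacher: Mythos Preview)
Your proposal is correct and follows essentially the same route as the paper: the stabilizer vanishes on smooth functions for consistency, Cauchy--Schwarz together with $\|\bta_{0h}^{\mathtt d}\|_{0,K}\le\|\bta_{0h}\|_{0,K}$ gives boundedness with $c_{\mathcal A}=\max\{1/\nu,\rho\}$, and for coercivity the paper likewise first identifies $\widetilde{\mathbb X}_h$ as the weak-divergence-free subspace and then invokes Lemma~\ref{lb} to obtain $\alpha=\min\{\hat c_\Omega/\nu,\rho\}$. Your parenthetical caution about how $\mathcal A$ acts on weak functions (via the interior component) is well taken, as the paper leaves that identification implicit.
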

\begin{proof}
By considering $\boldsymbol{\xi}\in\mathbb{H}^{1}(\Omega)$ and $\bze_{h}\in \mathbb{X}_{h}$ and employing the definition of the discrete form $\mathcal{A}_{h}$ (cf. \eqref{eq:e2}), along with the observation that the stabilization term (cf. \eqref{def:Stab}) vanishes when one of the components is sufficiently regular, we deduce the "consistency" property. Next, to verify the boundedness of $\mathcal{A}_h$, we use the Cauchy--Schwarz inequality and virtue of
\begin{equation}\label{taud}
\Vert\bta_{0h}^{\mathtt{d}}\Vert_{0,K}^{2}\,=\,\Vert\bta_{0h}\Vert_{0,K}^{2}-\dfrac{1}{2}\Vert\operatorname{tr}(\bta_{0h})\Vert_{0,K}^{2}\,\leq\, \Vert\bta_{0h}\Vert_{0,K}^{2}\,.
\end{equation}
This gives
\begin{align*}
\big|\mathcal{A}_{h}(\bze_{h},\bta_{h})\big|&\,=\,\bigg|\sum_{K\in\mathcal{K}_{h}}\Big(\dfrac{1}{\nu}\disp\int_{K}\bze_{0h}^{\mathtt{d}}:\bta_{0h}^{\mathtt{d}}+\rho\,\mathcal{S}^{K}(\bze_{h},\bta_{h})\Big)\bigg|\nonumber\\
&\,\leq\, \sum_{K\in\mathcal{K}_{h}}\Big(\dfrac{1}{\nu}\Vert\bze_{0h}\Vert_{0,K}\,\Vert\bta_{0h}\Vert_{0,K}+\rho\left(\mathcal{S}^{K}(\bze_{h},\bze_{h})\right)^{1/2}\left(\mathcal{S}^{K}(\bta_{h},\bta_{h})\right)^{1/2}\Big)\\
&\,\leq\, \max\left\{\dfrac{1}{\nu},\rho\right\}\Vert\bze_{h}\Vert_{\mathbb{H},h}\,\Vert\bta_{h}\Vert_{\mathbb{H},h}\,.
\end{align*}

We now aim to establish the ellipticity of $\mathcal{A}_h$ on the discrete kernel of $\mathcal{B}_{h}|_{\mathbb{X}_h\times\mathbf{Y}_h}$, that is,
\[
\widetilde{\mathbb{X}}_{h} \,:=\,\bigg\{ \bta_h \in\mathbb{X}_{h} :\quad  \int_{\Omega}\operatorname{\mathbf{div}}_{\mathtt{w,h}}(\bta_h)\cdot \mathbf{v}_h \,=\,0 \quad \forall\, \mathbf{v}_h\in\mathbf{Y}_h \bigg\}\,,
\]
which, together with fact $\operatorname{\mathbf{div}}_{\mathtt{w,h}}(\bta_h)\in\mathbf{Y}_h$ (cf. Definition \ref{d3}) implies that
\begin{equation}\label{KerD}
\widetilde{\mathbb{X}}_{h} \,:=\,\Big\{ \bta_h \in\mathbb{X}_{h} :\quad  \bdiv_{\mathtt{w,h}}(\bta_h) \,=\,0 \quad\text{in}\,\,\Omega\,\Big\}\,.
\end{equation}
Therefore, employing Lemma \ref{lb} and the fact that $\bta_h$ is weak divergence-free, we obtain
\begin{align*}
\mathcal{A}_{h}(\bta_{h},\bta_{h})&\,=\,\sum_{K\in\mathcal{K}_{h}}\Big(\dfrac{1}{\nu}\Vert\bta_{0h}^{\mathtt{d}}\Vert_{0,K}^{2}+\rho\,\mathcal{S}^{K}(\bta_{h},\bta_{h})\Big)\\
&\,\geq\, \sum_{K\in\mathcal{K}_{h}}\Big(\dfrac{\hat{c}_{\Omega}}{\nu}\Vert\bta_{0h}\Vert_{0,K}^{2}+\rho\,\mathcal{S}^{K}(\bta_{h},\bta_{h})\Big)\\
&\,\geq\, \min\left\{\dfrac{\hat{c}_{\Omega}}{\nu},\rho\right\}\Vert\bta_{h}\Vert_{\mathbb{H},h}^{2}\,,
\end{align*}
which together with setting $c_{\mathcal{A}} \,=\,\max\left\{\dfrac{1}{\nu},\rho\right\}$ and $\alpha \,=\,\min\left\{\dfrac{1}{\nu},\rho\right\}$ completes the proof.
\end{proof}
To establish the discrete inf-sup condition for the bilinear form $\mathcal{B}_h$, we require a preliminary result, as stated in the following lemma.
\begin{lemma}\label{l_BBh}
For any $\bta_{h}\in \mathbb{X}_{h}$ and $\mathbf{v}_{h}\in \mathbf{Y}_{h}$, we have
\[
\mathcal{B}_{h}(\bta_{h},\mathbf{v}_{h})\,=\,\sum_{e\in\mathcal{E}_h}\langle \bta_{bh}\mathbf{n}_{e},  \llbracket \mathbf{v}_h\rrbracket\rangle_{e}-\sum_{K\in\mathcal{K}_h}(\bta_{0h}, \boldsymbol{\nabla}\mathbf{v}_h)_{0,K}\,.
\]
\end{lemma}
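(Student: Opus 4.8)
The plan is to prove the identity by unfolding the definition of $\mathcal{B}_h$ as a sum of local contributions $\mathcal{B}_h^K$, replacing each local term by the defining relation of the discrete weak divergence operator, and then reorganizing the boundary integrals into a sum over edges. First I would write
\[
\mathcal{B}_h(\bta_h,\mathbf{v}_h)\,=\,\sum_{K\in\mathcal{K}_h}\mathcal{B}_h^K(\bta_h,\mathbf{v}_h)\,=\,\sum_{K\in\mathcal{K}_h}\int_K\bdiv_{\mathtt{w,h}}(\bta_h)\cdot\mathbf{v}_h\,,
\]
and observe that since $\mathbf{v}_h|_K\in\mathbf{P}_{k+1}(K)$, the test function in Definition \ref{d3} may be taken to be $\mathbf{v}_h$ itself on each $K$. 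Applying \eqref{eq18} with $\bze_h=\mathbf{v}_h|_K$ then gives
\[
\int_K\bdiv_{\mathtt{w,h}}(\bta_h)\cdot\mathbf{v}_h\,=\,-(\bta_{0h},\boldsymbol{\nabla}\mathbf{v}_h)_{0,K}+\langle\bta_{bh}\mathbf{n}_e,\mathbf{v}_h\rangle_{0,\partial K}\,,
\]
so that summing over $K$ yields $\mathcal{B}_h(\bta_h,\mathbf{v}_h)=-\sum_K(\bta_{0h},\boldsymbol{\nabla}\mathbf{v}_h)_{0,K}+\sum_K\langle\bta_{bh}\mathbf{n}_e,\mathbf{v}_h\rangle_{0,\partial K}$.

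The remaining work is to rewrite the sum of boundary integrals $\sum_K\langle\bta_{bh}\mathbf{n}_e,\mathbf{v}_h\rangle_{0,\partial K}$ as an edge sum involving the jump $\llbracket\mathbf{v}_h\rrbracket$. Here I would use that $\bta_{bh}|_e$ is single-valued on each edge $e$ (it is defined edge-wise as $\tau_b\otimes\mathbf{n}_e$, independent of which element $K$ contains $e$), whereas $\mathbf{v}_h$ is in general double-valued across interior edges. Regrouping the element-boundary contributions edge by edge, each interior edge $e\in\mathcal{E}_h^0$ shared by two elements $K^+,K^-$ contributes $\langle\bta_{bh}\mathbf{n}_{e}^{+},\mathbf{v}_h^{+}\rangle_e+\langle\bta_{bh}\mathbf{n}_{e}^{-},\mathbf{v}_h^{-}\rangle_e$; with the standard sign convention that the jump $\llbracket\mathbf{v}_h\rrbracket$ is defined relative to a fixed normal orientation $\mathbf{n}_e$ on $e$, and absorbing the orientation into $\bta_{bh}\mathbf{n}_e$, this collapses to $\langle\bta_{bh}\mathbf{n}_e,\llbracket\mathbf{v}_h\rrbracket\rangle_e$. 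On boundary edges $e\subset\partial\Omega$ there is a single element and $\llbracket\mathbf{v}_h\rrbracket$ is interpreted as the trace $\mathbf{v}_h$ itself, so the same formula holds. Collecting terms gives exactly
\[
\mathcal{B}_h(\bta_h,\mathbf{v}_h)\,=\,\sum_{e\in\mathcal{E}_h}\langle\bta_{bh}\mathbf{n}_e,\llbracket\mathbf{v}_h\rrbracket\rangle_e-\sum_{K\in\mathcal{K}_h}(\bta_{0h},\boldsymbol{\nabla}\mathbf{v}_h)_{0,K}\,.
\]

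The main obstacle — really the only nontrivial point — is the bookkeeping of normal orientations and jump sign conventions when passing from the element-boundary sum to the edge sum. One must be careful that the symbol $\mathbf{n}_e$ inside $\bta_{bh}\mathbf{n}_e$ is the same fixed normal used in the definition of $\llbracket\cdot\rrbracket$ (cf. the set $\mathrm{D}_h$ in \eqref{eq17}), and that the single-valuedness of the tensor trace $\bta_{bh}$ across interior edges is what makes the two element contributions combine into a jump rather than an average. Once the convention is fixed consistently, the identity follows by a direct rearrangement of finite sums, with no estimates or limiting arguments required.
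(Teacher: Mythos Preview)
Your proposal is correct and matches the paper's approach exactly: the paper's own proof consists of a single sentence stating that the identity follows straightforwardly from the definition of $\mathcal{B}_h$ (cf.~\eqref{eq:e3}) and the discrete weak divergence (Definition~\ref{d3}), which is precisely the unfolding and edge-regrouping you carry out in detail. Your treatment is in fact more explicit than the paper's, particularly in justifying the passage from element-boundary integrals to the jump sum via the single-valuedness of $\bta_{bh}$.
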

\begin{proof}
It straightforwardly follows from the definition of $\mathcal{B}_h$, as given in \eqref{eq:e3}, and the application of the discrete divergence operator (cf. Definition \ref{d3}).
\end{proof}
We are now in a position to establish the discrete inf-sup condition for the bilinear from $\mathcal{B}_h$.
\begin{lemma}\label{l_iB}
There exists a positive constant $\widehat{\beta}$ independent of $h$ such that
\begin{equation}\label{infD}
\sup_{\mathbf{0}\neq \bta_{h}\in \mathbb{X}_{h}}\dfrac{\mathcal{B}_{h}(\bta_{h},\mathbf{v}_{h})}{\Vert\bta_{h}\Vert_{\mathbb{H},h}}\,\geq\, \widehat{\beta}\,\Vert\mathbf{v}_{h}\Vert_{1,h}\qquad \forall\,\mathbf{v}_{h}\in \mathbf{Y}_{h}\,.
\end{equation}
\end{lemma}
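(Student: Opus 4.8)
The plan is to establish the discrete inf--sup condition for $\mathcal{B}_h$ by a Fortin-type argument: given $\mathbf{v}_h\in\mathbf{Y}_h$, construct an explicit test tensor $\bta_h\in\mathbb{X}_h$ for which $\mathcal{B}_h(\bta_h,\mathbf{v}_h)$ controls $\Vert\mathbf{v}_h\Vert_{1,h}^2$ from below while $\Vert\bta_h\Vert_{\mathbb{H},h}$ is bounded above by $\Vert\mathbf{v}_h\Vert_{1,h}$. The natural candidate is suggested directly by Lemma \ref{l_BBh}, which expresses $\mathcal{B}_h(\bta_h,\mathbf{v}_h)=\sum_{e\in\mathcal{E}_h}\langle\bta_{bh}\mathbf{n}_e,\llbracket\mathbf{v}_h\rrbracket\rangle_e-\sum_{K\in\mathcal{K}_h}(\bta_{0h},\boldsymbol{\nabla}\mathbf{v}_h)_{0,K}$. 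To make both terms nonnegative multiples of the squared pieces of $\Vert\mathbf{v}_h\Vert_{1,h}^2=\sum_K|\mathbf{v}_h|_{1,K}^2+\sum_e h_e^{-1}\Vert\Pcalbf_{b,k}\llbracket\mathbf{v}_h\rrbracket\Vert_{0,e}^2$, I would set $\bta_{0h}|_K:=-\boldsymbol{\nabla}\mathbf{v}_h|_K$ (which lies in $\mathbb{P}_k(K)$ since $\mathbf{v}_h|_K\in\mathbf{P}_{k+1}(K)$) and choose $\tau_b$ on each edge $e$ so that $\bta_{bh}|_e=\tau_b\otimes\mathbf{n}_e$ contributes $h_e^{-1}\Pcalbf_{b,k}\llbracket\mathbf{v}_h\rrbracket$ to the boundary pairing; concretely $\tau_b:=h_e^{-1}\Pcalbf_{b,k}(\llbracket\mathbf{v}_h\rrbracket)$ on each $e$, with a consistent choice of orientation of $\mathbf{n}_e$ so the jump terms add up with the correct sign.

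With this choice the first step is to compute, using Lemma \ref{l_BBh} and the defining property of $\Pcalbf_{b,k}$, that
\begin{equation*}
\mathcal{B}_h(\bta_h,\mathbf{v}_h)\,=\,\sum_{e\in\mathcal{E}_h}h_e^{-1}\Vert\Pcalbf_{b,k}\llbracket\mathbf{v}_h\rrbracket\Vert_{0,e}^2+\sum_{K\in\mathcal{K}_h}|\mathbf{v}_h|_{1,K}^2\,=\,\Vert\mathbf{v}_h\Vert_{1,h}^2\,.
\end{equation*}
The second step is to bound $\Vert\bta_h\Vert_{\mathbb{H},h}$ from above by $C\Vert\mathbf{v}_h\Vert_{1,h}$. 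The interior contribution $\sum_K\Vert\bta_{0h}\Vert_{0,K}^2=\sum_K\Vert\boldsymbol{\nabla}\mathbf{v}_h\Vert_{0,K}^2=\sum_K|\mathbf{v}_h|_{1,K}^2$ is immediate. For the stabilization contribution $\mathcal{S}^K(\bta_h,\bta_h)=h_K\Vert\bta_{0h}\mathbf{n}-\bta_{bh}\mathbf{n}\Vert_{0,\partial K}^2$, I would split into $h_K\Vert\bta_{0h}\mathbf{n}\Vert_{0,\partial K}^2$ and $h_K\Vert\bta_{bh}\mathbf{n}\Vert_{0,\partial K}^2$; the former is handled by a discrete trace (inverse) inequality on $K$, valid under the shape-regularity assumptions $\mathbf{A1}$--$\mathbf{A4}$, giving $h_K\Vert\bta_{0h}\mathbf{n}\Vert_{0,\partial K}^2\lesssim\Vert\bta_{0h}\Vert_{0,K}^2=|\mathbf{v}_h|_{1,K}^2$, while for the latter, since $\bta_{bh}\mathbf{n}_e=\tau_b=h_e^{-1}\Pcalbf_{b,k}(\llbracket\mathbf{v}_h\rrbracket)$, one gets $h_K\Vert\bta_{bh}\mathbf{n}\Vert_{0,\partial K}^2\lesssim\sum_{e\subset\partial K}h_e^{-1}\Vert\Pcalbf_{b,k}\llbracket\mathbf{v}_h\rrbracket\Vert_{0,e}^2$ using $h_K\simeq h_e$. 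Summing over $K$ and using the finite overlap of the edge sums yields $\Vert\bta_h\Vert_{\mathbb{H},h}^2\lesssim\Vert\mathbf{v}_h\Vert_{1,h}^2$. Combining the two steps, $\mathcal{B}_h(\bta_h,\mathbf{v}_h)/\Vert\bta_h\Vert_{\mathbb{H},h}\geq\Vert\mathbf{v}_h\Vert_{1,h}^2/(C\Vert\mathbf{v}_h\Vert_{1,h})=\widehat{\beta}\Vert\mathbf{v}_h\Vert_{1,h}$ with $\widehat{\beta}=1/C$, and taking the supremum over $\bta_h\in\mathbb{X}_h$ gives \eqref{infD}.

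The main obstacle I anticipate is twofold and essentially bookkeeping rather than deep: first, one must be careful with the orientation of $\mathbf{n}_e$ on interior edges so that every boundary term in Lemma \ref{l_BBh} enters with the same (nonnegative) sign after the jump $\llbracket\mathbf{v}_h\rrbracket$ is paired with $\bta_{bh}\mathbf{n}_e$ — this is where the definition of $\mathrm{D}_h$ and the convention for $\llbracket\cdot\rrbracket$ relative to $\mathbf{n}_e$ must be used consistently, and on boundary edges one should check the chosen $\bta_h$ still lies in $\mathbb{X}_h$ (no constraint there beyond the polynomial degree, since $\mathbb{X}_{0,h}$ is not required in the sup over $\mathbb{X}_h$). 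Second, verifying the discrete trace inequality $h_K\Vert w_h\Vert_{0,\partial K}^2\lesssim\Vert w_h\Vert_{0,K}^2$ for polynomials on general shape-regular polygons requires invoking the mesh assumptions $\mathbf{A1}$--$\mathbf{A4}$ of \cite{Wang14} (star-shapedness with respect to a ball, edge-length comparability, etc.); I would simply cite the corresponding inverse/trace estimate established there rather than reprove it. Everything else is routine application of Cauchy--Schwarz, the projection identities, and $h_K\simeq h_e$.
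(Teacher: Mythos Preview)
Your proposal is correct and follows essentially the same approach as the paper: the same explicit test tensor $\bta_{0h}=-\boldsymbol{\nabla}\mathbf{v}_h$, $\bta_{bh}=h_e^{-1}\Pcalbf_{b,k}\llbracket\mathbf{v}_h\rrbracket\otimes\mathbf{n}_e$ is chosen, yielding $\mathcal{B}_h(\bta_h,\mathbf{v}_h)=\Vert\mathbf{v}_h\Vert_{1,h}^2$, and the bound $\Vert\bta_h\Vert_{\mathbb{H},h}\lesssim\Vert\mathbf{v}_h\Vert_{1,h}$ is obtained via the discrete trace inequality. The only cosmetic difference is that the paper bounds $\mathcal{S}^K(\bta_h,\bta_h)$ directly rather than splitting $\bta_{0h}\mathbf{n}$ and $\bta_{bh}\mathbf{n}$, arriving at $\widehat{\beta}=(1+\mathcal{C}_{\mathtt{tr}})^{-1/2}$.
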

\begin{proof}
In what follows, we proceed similarly to the proof of \cite[Lemma 3.3]{Wang14}. In fact, given $\mathbf{v}_h\in\mathbf{Y}_h$, we set
\begin{equation*}
\left\{\begin{array}{l}
\bta_{0h}\,=\,-\boldsymbol{\nabla}\mathbf{v}_h\qquad\qquad \qquad~\text{in}~K \,,\\[2mm]
\bta_{bh}\,=\,h_{e}^{-1} \Pcalbf_{b,k}\llbracket \mathbf{v}_h\rrbracket \otimes\mathbf{n}_{e}\qquad \text{on}~e \,,
\end{array}\right.
\end{equation*}
which satisfies
\begin{equation}\label{eq1:infsup}
\mathcal{B}_{h}(\bta_{h},\mathbf{v}_{h})\,=\,\sum_{e\in\mathcal{E}_h}\langle \Pcalbf_{b,k}\llbracket \mathbf{v}_h\rrbracket,  \Pcalbf_{b,k}\llbracket \mathbf{v}_h\rrbracket\rangle_{e}+\sum_{K\in\mathcal{K}_h}(\boldsymbol{\nabla}\mathbf{v}_h, \boldsymbol{\nabla}\mathbf{v}_h)_{0,K}\,=\,\Vert \mathbf{v}_h\Vert_{1,h}^{2}\,.
\end{equation}
On the other hand, we use the definition of the discrete norm $\Vert\cdot\Vert_{\mathbb{H},h}$ to the above chosen $\bta_{h}\,:=\,\{\bta_{0h},\bta_{bh}\}$ and the trace inequlaity, to obtain
\begin{align*}
\Vert\bta_{h}\Vert_{\mathbb{H},h}^2&\,=\,\sum_{K\in\mathcal{K}_{h}}\Big(\Vert \boldsymbol{\nabla}\mathbf{v}_h\Vert_{0,K}^{2}+\mathcal{S}^{K}(\bta_{h},\bta_{h})\Big)\nonumber\\
&\,=\,\sum_{K\in\mathcal{K}_{h}}\Big(\Vert \boldsymbol{\nabla}\mathbf{v}_h\Vert_{0,K}^{2}+h_{K}\Vert (\boldsymbol{\nabla}\mathbf{v}_h)\mathbf{n}+h_{e}^{-1} \Pcalbf_{b,k}\llbracket \mathbf{v}_h\rrbracket\Vert_{0,\partial K}^{2}\Big)\nonumber\\
&\,\leq\, (1+\mathcal{C}_{\mathtt{tr}})\sum_{K\in\mathcal{K}_{h}}\Vert \boldsymbol{\nabla}\mathbf{v}_h\Vert_{0,K}^{2}+\sum_{e\in\mathcal{E}_h}h_{e}^{-1} \Vert\Pcalbf_{b,k}\llbracket \mathbf{v}_h\rrbracket\Vert_{0,e}^{2}\,\leq\, \max\{1,(1+\mathcal{C}_{\mathtt{tr}})  \}\Vert \mathbf{v}_h\Vert_{1,h}^{2}\,.
\end{align*}
Combining the above result with \eqref{eq1:infsup} concludes the desired inequality \eqref{infD} with setting $\widehat{\beta}:=\frac{1}{\sqrt{\max\{1,(1+\mathcal{C}_{\mathtt{tr}})  \}}}$.
\end{proof}
The boundedness properties of the discrete bilinear form $ \mathcal{B}_h $ (cf. \eqref{eq:e3}) is established as follows.
\begin{lemma}\label{l_bundBh}
The discrete bilinear form $\mathcal{B}_h(\cdot,\cdot)$ is bounded in $\mathbb{X}_{h}\times\mathbf{Y}_h$. In other words, there exists a positive constant $c_{\mathcal{B}}$ such that
\begin{equation}
\big| \mathcal{B}_h(\bta_h,\mathbf{v}_h)\big|\,\leq\, c_{\mathcal{B}} \,\Vert \bta_h\Vert_{\mathbb{H},h}\,\Vert \mathbf{v}_h\Vert_{1,h}\qquad\forall\, (\bta_h,\mathbf{v}_h) \in \mathbb{X}_{h} \times\mathbf{Y}_h \,.
\end{equation}
\end{lemma}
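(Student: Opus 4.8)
The plan is to start from the identity for $\mathcal{B}_h$ furnished by Lemma~\ref{l_BBh}, namely
\[
\mathcal{B}_h(\bta_h,\mathbf{v}_h)\,=\,\sum_{e\in\mathcal{E}_h}\langle \bta_{bh}\mathbf{n}_e,\llbracket\mathbf{v}_h\rrbracket\rangle_e\,-\,\sum_{K\in\mathcal{K}_h}(\bta_{0h},\boldsymbol{\nabla}\mathbf{v}_h)_{0,K}\,,
\]
and to bound the two sums separately. The interior sum is immediate: Cauchy--Schwarz in $\mathbb{L}^2(K)$ followed by the discrete Cauchy--Schwarz in $\ell^2$ over $\mathcal{K}_h$ gives $\big|\sum_K(\bta_{0h},\boldsymbol{\nabla}\mathbf{v}_h)_{0,K}\big|\le\big(\sum_K\|\bta_{0h}\|_{0,K}^2\big)^{1/2}\big(\sum_K|\mathbf{v}_h|_{1,K}^2\big)^{1/2}$, and both factors are dominated by $\Vert\bta_h\Vert_{\mathbb{H},h}$ and $\Vert\mathbf{v}_h\Vert_{1,h}$ respectively, directly from the definitions of these discrete norms.

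For the edge sum I would first observe that, by construction of $\mathbb{X}_h(K)$ and since $\mathbf{n}_e$ is unit, one has $\bta_{bh}\mathbf{n}_e=\tau_b\in\mathbf{P}_k(e)$ on each $e$, so the defining property of the $\mathrm{L}^2$-projection lets us insert $\Pcalbf_{b,k}$ for free: $\langle\bta_{bh}\mathbf{n}_e,\llbracket\mathbf{v}_h\rrbracket\rangle_e=\langle\bta_{bh}\mathbf{n}_e,\Pcalbf_{b,k}\llbracket\mathbf{v}_h\rrbracket\rangle_e$. Applying Cauchy--Schwarz on each edge, splitting the weight as $1=h_e^{1/2}h_e^{-1/2}$, and summing over $\mathcal{E}_h$ yields
\[
\Big|\sum_{e\in\mathcal{E}_h}\langle\bta_{bh}\mathbf{n}_e,\llbracket\mathbf{v}_h\rrbracket\rangle_e\Big|\,\le\,\Big(\sum_{e\in\mathcal{E}_h}h_e\|\bta_{bh}\mathbf{n}_e\|_{0,e}^2\Big)^{1/2}\Big(\sum_{e\in\mathcal{E}_h}h_e^{-1}\|\Pcalbf_{b,k}\llbracket\mathbf{v}_h\rrbracket\|_{0,e}^2\Big)^{1/2}\,,
\]
where the second factor is $\le\Vert\mathbf{v}_h\Vert_{1,h}$ by definition of that norm.

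What remains, and what I expect to be the crux, is to bound $\sum_{e}h_e\|\bta_{bh}\mathbf{n}_e\|_{0,e}^2$ by $\Vert\bta_h\Vert_{\mathbb{H},h}^2$, i.e.\ to trade the face norm of the boundary component $\bta_{bh}$ for the stabilization and interior norms. I would pass to an element-wise sum (each edge occurs at most twice and $h_e\le h_K$ for $e\subset\partial K$ by shape regularity), so that $\sum_e h_e\|\bta_{bh}\mathbf{n}_e\|_{0,e}^2\le\sum_K h_K\|\bta_{bh}\mathbf{n}\|_{0,\partial K}^2$, then use the triangle inequality and $(a+b)^2\le 2a^2+2b^2$ to obtain
\[
h_K\|\bta_{bh}\mathbf{n}\|_{0,\partial K}^2\,\le\,2\,h_K\|\bta_{0h}\mathbf{n}-\bta_{bh}\mathbf{n}\|_{0,\partial K}^2+2\,h_K\|\bta_{0h}\mathbf{n}\|_{0,\partial K}^2\,=\,2\,\mathcal{S}^K(\bta_h,\bta_h)+2\,h_K\|\bta_{0h}\mathbf{n}\|_{0,\partial K}^2\,,
\]
recognizing the stabilizer of \eqref{def:Stab} in the first term, and controlling the second by $\mathbf{n}$ being unit together with the discrete trace inequality $h_K\|\bta_{0h}\|_{0,\partial K}^2\le\mathcal{C}_{\mathtt{tr}}\|\bta_{0h}\|_{0,K}^2$ (valid for polynomials on shape-regular elements under $\mathbf{A1}$--$\mathbf{A4}$). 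Summing over $K$ gives $\sum_e h_e\|\bta_{bh}\mathbf{n}_e\|_{0,e}^2\le\max\{2,2\mathcal{C}_{\mathtt{tr}}\}\Vert\bta_h\Vert_{\mathbb{H},h}^2$, and combining the three estimates yields the claim with, e.g., $c_{\mathcal{B}}=1+\sqrt{\max\{2,2\mathcal{C}_{\mathtt{tr}}\}}$, depending only on $\mathcal{C}_{\mathtt{tr}}$ and the shape-regularity constants. Apart from that trace/triangle-inequality step, everything is Cauchy--Schwarz and bookkeeping of the mesh-dependent weights.
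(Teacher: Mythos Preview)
Your proposal is correct and follows essentially the same route as the paper's proof: start from Lemma~\ref{l_BBh}, Cauchy--Schwarz the interior and edge sums separately, and then control $\sum_e h_e\|\bta_{bh}\mathbf{n}_e\|_{0,e}^2$ by splitting $\bta_{bh}\mathbf{n}=(\bta_{bh}-\bta_{0h})\mathbf{n}+\bta_{0h}\mathbf{n}$ and invoking the stabilizer plus a trace inequality. Your explicit insertion of $\Pcalbf_{b,k}$ on the jump (justified because $\bta_{bh}\mathbf{n}_e\in\mathbf{P}_k(e)$) is in fact a useful clarification, since the norm $\|\cdot\|_{1,h}$ carries $\Pcalbf_{b,k}\llbracket\mathbf{v}_h\rrbracket$ rather than the raw jump; the paper's write-up glosses over this point.
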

\begin{proof}
An application of Lemma \ref{l_BBh} and Cauchy--Schwarz inequality, yields
\begin{equation}\label{eq2:boundBh}
	\begin{array}{c}
\big| \mathcal{B}_h(\bta_h,\mathbf{v}_h)\big|\,\leq\, \disp\sum_{e\in\mathcal{E}_h}\Vert \bta_{bh}\mathbf{n}_{e}\Vert_{0,e}\,\Vert  \llbracket \mathbf{v}_h\rrbracket\Vert_{0,e}+\disp\sum_{K\in\mathcal{K}_h}\Vert\bta_{0h}\Vert_{0,K}\, \Vert\boldsymbol{\nabla}\mathbf{v}_h\Vert_{0,K}\\[2ex]
\,\leq\, \left(\disp\sum_{e\in\mathcal{E}_h}h_e \Vert \bta_{bh}\mathbf{n}_{e}\Vert_{0,e}^2 \right)^{1/2}
\left(\disp\sum_{e\in\mathcal{E}_h}h_e^{-1}\Vert  \llbracket \mathbf{v}_h\rrbracket\Vert_{0,e}^{2} \right)^{1/2}+\Vert\bta_{0h}\Vert_{0,\Omega}\left(\disp\sum_{K\in\mathcal{K}_h}\Vert\boldsymbol{\nabla}\mathbf{v}_h\Vert_{0,K}^{2}\right)^{1/2}\\[2ex]
\,\leq\, \bigg\{ \left(\disp\sum_{e\in\mathcal{E}_h}h_e \Vert \bta_{bh}\mathbf{n}_{e}\Vert_{0,e}^2 \right)^{1/2}+ \Vert\bta_{0h}\Vert_{0,\Omega}\bigg\}\,\Vert \mathbf{v}_h\Vert_{1,h}\,.
	\end{array}
\end{equation}
Our aim is now to show that
\[
\left(\sum_{e\in\mathcal{E}_h}h_e \Vert \bta_{bh}\mathbf{n}_{e}\Vert_{0,e}^2 \right)^{1/2}+ \Vert\bta_{0h}\Vert_{0,\Omega}\,\lesssim \,\Vert \bta_h\Vert_{\mathbb{H},h}\qquad\forall\,\bta_{h}\in\mathbb{X}_h \,.
\]
To attain the aforementioned inequality, we employ the trace inequality, resulting 
\begin{align*}
\sum_{e\in\mathcal{E}_h}h_e \Vert \bta_{bh}\mathbf{n}_{e}\Vert_{0,e}^2&\,\leq\, 2\sum_{e\in\mathcal{E}_h}\Big(h_e \Vert (\bta_{bh}-\bta_{0h})\mathbf{n}_{e}\Vert_{0,e}^2+h_e \Vert \bta_{0h}\mathbf{n}_{e}\Vert_{0,e}^2\Big)\nonumber\\[1mm]
&\,\leq\, 2\sum_{e\in\mathcal{E}_h}h_e \Vert (\bta_{bh}-\bta_{0h})\mathbf{n}_{e}\Vert_{0,e}^2+2\,\mathcal{C}_{\mathtt{tr}}^2\sum_{K\in\mathcal{K}_h}\Vert \bta_{0h}\Vert_{0,K}^2 \,.
\end{align*}
Substituting the above result back into \eqref{eq2:boundBh} finishes the proof of lemma.
\end{proof}
Now, employing the aforementioned stability properties of $\mathcal{A}_{h}$ (cf. \ref{l_ah}, eqs. \eqref{bun_Ah} and \eqref{co_Ah}), the discrete inf-sup condition and the boundedness of $\mathcal{B}_h$ (cf. Lemmas \ref{l_iB} and \ref{l_bundBh}), and applying \cite[Proposition 2.36]{eg-AMS-2004} it is not difficult to see that the bilinear form $\Lambda:(\mathbb{X}_h\times\mathbf{Y}_h)\times(\mathbb{X}_h\times\mathbf{Y}_h)\rightarrow\mathrm{R}$  defined by
\begin{equation}\label{eq:defGamF}
\Lambda_h[(\bze_{h},\textbf{w}_{h}), (\bta_{h},\bv_{h})]\,:=\,\mathcal{A}_{h}(\bze_{h},\bta_{h})+\mathcal{B}_h(\bta_{h},\textbf{w}_{h})-\mathcal{B}_h(\bze_{h},\bv_{h})\,,
\end{equation}
satisfies:
\begin{equation}\label{eq:infsupF}
\sup_{\textbf{0}\neq (\bta_{h},\mathbf{v}_{h})\in \mathbb{X}_{h}\times \mathbf{Y}_{h}}\dfrac{\Lambda_h[(\bze_{h},\mathbf{w}_{h}), (\bta_{h},\mathbf{v}_{h})]}{\Vert(\bta_{h},\mathbf{v}_{h})\Vert_{\mathbb{X}\times \mathbf{Y}}}\,\geq\, \alpha_{\Lambda,\mathtt{d}}\,\Vert(\bze_{h},\mathbf{w}_{h})\Vert_{\mathbb{X}\times \mathbf{Y}}\,,
\end{equation}
 where $\alpha_{\Lambda,\mathtt{d}}$ is the positive constant dependent on $c_{\mathcal{A}}, c_{\mathcal{B}}, \alpha,  \widehat{\beta}$.
 
Furthermore, the boundedness of the trilinear form $\mathcal{C}(\cdot;\cdot,\cdot)$ on $\mathbf{Y}_{h}\times \mathbf{Y}_{h}\times\mathbb{X}_{h}$ can be readily inferred by utilizing the H\"older inequality, the Sobolev embedding $\mathbf{H}^1\subset\mathbf{L}^4$ and inequality \eqref{taud}, as follows:
\begin{equation}\label{boundC}
\begin{array}{c}
\big|\mathcal{C}(\textbf{w}_{h};\bv_{h},\bta_{h})\big|\,=\,\big|\dfrac{1}{\nu}\left((\textbf{w}_{h}\otimes\bv_{h})^{\mathtt{d}},\bta_{0h}\right)_{0,\Omega}\big|\\[2ex]
\,=\,\big|\dfrac{1}{\nu}\left((\textbf{w}_{h}\otimes\bv_{h}),\bta_{0h}^{\mathtt{d}}\right)_{0,\Omega}\big|\,\leq\, \dfrac{1}{\nu}\Vert\textbf{w}_{h}\Vert_{\mathbf{Y}}\Vert\bv_{h}\Vert_{\mathbf{Y}}\Vert\bta_{0h}^{\mathtt{d}}\Vert_{0,\Omega}\\[2ex]
\,\leq\, \dfrac{c_{\mathtt{em}}^{2}}{\nu}\Vert\textbf{w}_{h}\Vert_{1,h}\Vert\bv_{h}\Vert_{1,h}\Vert\bta_{h}\Vert_{\mathbb{H},h}\,.
\end{array}
\end{equation}
\subsection{The fixed-point strategy}
We begin by introducing the associated fixed-point operator for any $\textbf{z}_{h}\in \mathbf{Y}_{h}$ as $\mathbf{S}_{\mathtt{d}}(\textbf{z}_{h})\,=\,\bu_{\star,h}$, where $(\bsi_{\star,h},\bu_{\star,h})\in \mathbb{X}_{0,h}\times\mathbf{Y}_h$ is the solution of the linearized version of Problem \ref{p:3}, that is,
\begin{equation}\label{eq:linProb}
\left\{\begin{array}{rcll}
\mathcal{A}_{h}(\bsi_{\star,h}, \bta_{h})+\mathcal{C}(\textbf{z}_{h},\bu_{\star,h};\bta_{h})+\mathcal{B}_{h}(\bta_{h},\bu_{\star,h})\,&=&\,\mathcal{G}(\bta_{h}) & \quad\forall\,\bta_{h}\in \mathbb{X}_{0,h}\,,\\[3mm]
\mathcal{B}_{h}(\bsi_{\star,h},\bv_{h})\,&=&\,\mathcal{F}(\bv_{h}) &\quad \forall\, \bv_{h}\in \mathbf{Y}_{h}\,.
\end{array}\right.
\end{equation}

Equivalently, given $ \bz_h\in \mathbf{Y}_h $, introducing the bilinear form $\Lambda_{h,\bz_h}:(\mathbb{X}_h\times\mathbf{Y}_h)\times(\mathbb{X}_h\times\mathbf{Y}_h)\rightarrow\mathrm{R}$ given by 
\begin{equation}\label{eq:defhatGamF}
\Lambda_{h,\bz_{h}}[(\bze_{h},\textbf{w}_{h}), (\bta_{h},\bv_{h})]\,:=\,\Lambda_h[(\bze_{h},\textbf{w}_{h}), (\bta_{h},\bv_{h})]\,+\,\mathcal{C}(\textbf{z}_{h};\textbf{w}_{h},\bv_{h}) \,,
\end{equation}
and the linear functional $ \mathrm{F}: \mathbb{X}_{h}\times\mathbf{Y}_h\rightarrow\mathrm{R} $ as 
\begin{equation}\label{key}
\mathrm{F}(\bta_{h},\bv_{h})\,:=\,	\mathcal{G}(\bta_{h})+\mathcal{F}(\bv_h) \,.
\end{equation}
for all $ (\bze_h ,\bw_h), (\bta_{h},\bv_{h})\in \mathbb{X}_{h}\times\mathbf{Y}_h $, the linearized problem \eqref{eq:linProb} can be rewritten as
\begin{equation}\label{eqF:sub1}
\Lambda_{\textbf{z}_{h}}[(\bsi_{\star,h},\textbf{u}_{\star,h}), (\bta_{h},\bv_{h})]\,=\,\mathrm{F}(\bta_{h},\bv_{h})\qquad\forall\,  (\bta_{h},\bv_{h})\in \mathbb{X}_{0,h}\times\mathbf{Y}_h \,.
\end{equation}

It can be observed that solving Problem \ref{p:3} is equivalent to seeking a fixed point of $\textbf{S}_{\mathtt{d}}$, that is: Find $\textbf{u}_h\in\mathbf{Y}_h$ such that
$$ \textbf{S}_{\mathtt{d}}(\textbf{u}_h)\,=\,\textbf{u}_h \,. $$

Next, we utilize the classical Banach--Nečas--Babuška theorem (see, for instance \cite[Theorem 2.6]{eg-AMS-2004}) to demonstrate that, for any arbitrary $\textbf{z}_h\in\mathbf{Y}_h$, the problem \eqref{eq:linProb} (or equivalently \eqref{eqF:sub1}) is well-posed, implying the well-definedness of $ \mathbf{S}_{\mathtt{d}} $.

The following lemma demonstrates the global discrete inf-sup condition for form $\Lambda_{\bz_{h}}$ on pair-space $\mathbb{X}_{h}\times \mathbf{Y}_{h}$.
\begin{lemma}
\label{l_iAC}
For any $\mathbf{z}_{h}\in\mathbf{Y}_{h} $ such that 
\begin{equation}\label{eq:as1}
	\Vert\mathbf{z}_{h}\Vert_{1,h}\,\leq\, \dfrac{\nu\,\alpha_{\Lambda,\mathtt{d}}}{2c_{\mathtt{em}}^{2}}\,,
\end{equation}
 there holds
\begin{equation}\label{QQ2}
\sup_{\mathbf{0}\neq (\bta_{h},\mathbf{v}_{h})\in \mathbb{X}_{h}\times \mathbf{Y}_{h}}\dfrac{\Lambda_{h,\bz_{h}}[(\bze_{h},\mathbf{w}_{h}), (\bta_{h},\mathbf{v}_{h})]}{\Vert(\bta_{h},\mathbf{v}_{h})\Vert_{h}}\,\geq\, \dfrac{\alpha_{\Lambda,\mathtt{d}}}{2}\,\Vert(\bze_{h},\mathbf{w}_{h})\Vert_{h}\qquad \forall\,(\bze_{h},\mathbf{w}_{h})\in\mathbb{X}_{h}\times \mathbf{Y}_{h}\,.
\end{equation}
where $\alpha_{\Lambda,\mathtt{d}}$ is the positive constant in \eqref{eq:infsupF}.
\end{lemma}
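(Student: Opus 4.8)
The strategy is the classical perturbation-of-inf-sup argument. The form $\Lambda_{h,\bz_h}$ differs from $\Lambda_h$ only by the convective trilinear contribution involving $\bz_h$ (cf. \eqref{eq:defhatGamF}), and the smallness hypothesis \eqref{eq:as1} is precisely the threshold below which that perturbation erodes at most half of the inf-sup constant $\alpha_{\Lambda,\mathtt{d}}$ already furnished for $\Lambda_h$ by \eqref{eq:infsupF}.

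First I would discard the trivial case $(\bze_h,\mathbf{w}_h)=(\mathbf 0,\mathbf 0)$. For $(\bze_h,\mathbf{w}_h)\neq(\mathbf 0,\mathbf 0)$, since $\mathbb{X}_h\times\mathbf{Y}_h$ is finite dimensional the supremum in \eqref{eq:infsupF} is attained, so I fix a pair $(\bta_h,\mathbf{v}_h)\in\mathbb{X}_h\times\mathbf{Y}_h$ with $\Vert(\bta_h,\mathbf{v}_h)\Vert_h=1$ such that
\[
\Lambda_h[(\bze_h,\mathbf{w}_h),(\bta_h,\mathbf{v}_h)]\,\geq\,\alpha_{\Lambda,\mathtt{d}}\,\Vert(\bze_h,\mathbf{w}_h)\Vert_h\,.
\]
The crux of the whole argument is that this \emph{same} test pair also controls the perturbation, because the norm in which the tensor factor of $\mathcal{C}$ is measured is exactly the norm appearing in the denominator of the inf-sup quotient.

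Next I would estimate that perturbation. By \eqref{eq:defhatGamF}, $\Lambda_{h,\bz_h}[(\bze_h,\mathbf{w}_h),(\bta_h,\mathbf{v}_h)]=\Lambda_h[(\bze_h,\mathbf{w}_h),(\bta_h,\mathbf{v}_h)]+\mathcal{C}(\bz_h,\mathbf{w}_h;\bta_h)$, and the boundedness estimate \eqref{boundC} of the trilinear form gives
\[
\big|\mathcal{C}(\bz_h,\mathbf{w}_h;\bta_h)\big|\,\leq\,\dfrac{c_{\mathtt{em}}^{2}}{\nu}\,\Vert\bz_h\Vert_{1,h}\,\Vert\mathbf{w}_h\Vert_{1,h}\,\Vert\bta_h\Vert_{\mathbb{H},h}\,.
\]
Since the product norm $\Vert\cdot\Vert_h$ dominates each of its component norms, one has $\Vert\mathbf{w}_h\Vert_{1,h}\leq\Vert(\bze_h,\mathbf{w}_h)\Vert_h$ and $\Vert\bta_h\Vert_{\mathbb{H},h}\leq\Vert(\bta_h,\mathbf{v}_h)\Vert_h=1$; combining this with the hypothesis $\Vert\bz_h\Vert_{1,h}\leq\tfrac{\nu\,\alpha_{\Lambda,\mathtt{d}}}{2c_{\mathtt{em}}^{2}}$ from \eqref{eq:as1} yields
\[
\big|\mathcal{C}(\bz_h,\mathbf{w}_h;\bta_h)\big|\,\leq\,\dfrac{\alpha_{\Lambda,\mathtt{d}}}{2}\,\Vert(\bze_h,\mathbf{w}_h)\Vert_h\,.
\]

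Putting the two displays together,
\[
\Lambda_{h,\bz_h}[(\bze_h,\mathbf{w}_h),(\bta_h,\mathbf{v}_h)]\,\geq\,\Big(\alpha_{\Lambda,\mathtt{d}}-\dfrac{\alpha_{\Lambda,\mathtt{d}}}{2}\Big)\Vert(\bze_h,\mathbf{w}_h)\Vert_h\,=\,\dfrac{\alpha_{\Lambda,\mathtt{d}}}{2}\,\Vert(\bze_h,\mathbf{w}_h)\Vert_h\,,
\]
and, since $\Vert(\bta_h,\mathbf{v}_h)\Vert_h=1$, dividing by this norm and passing to the supremum over all admissible test pairs delivers \eqref{QQ2}. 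I do not anticipate a genuine obstacle here: the only delicate point is the calibration of the constant in \eqref{eq:as1}, chosen so that the convective term consumes exactly $\alpha_{\Lambda,\mathtt{d}}/2$ of the available stability; the remainder is routine bookkeeping. This lemma then feeds, through the Banach--Ne\v{c}as--Babu\v{s}ka theorem, into the well-posedness of the linearized problem \eqref{eq:linProb}, hence into the well-definedness of the fixed-point map $\mathbf{S}_{\mathtt{d}}$.
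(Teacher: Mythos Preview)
Your proposal is correct and follows essentially the same perturbation-of-inf-sup argument as the paper: both combine the base inf-sup \eqref{eq:infsupF} for $\Lambda_h$ with the boundedness \eqref{boundC} of $\mathcal{C}(\bz_h;\cdot,\cdot)$ and then invoke the smallness hypothesis \eqref{eq:as1} so that the convective perturbation absorbs at most $\alpha_{\Lambda,\mathtt{d}}/2$. The only cosmetic difference is that you instantiate a maximizing test pair (using finite-dimensionality) whereas the paper works directly with the supremum; this changes nothing substantive.
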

\begin{proof}
Bearing in mind the definition of $\Lambda_{h,\bz_{h}}$ (cf. \eqref{eq:defhatGamF}) for each $\bz_h \in \mathbf Y_h$, 
and combining \eqref{eq:infsupF} with the 
effect of the extra term given by $\mathcal{C}(\bz_h;\cdot,\cdot)$, which means invoking the upper bound provided by  \eqref{boundC}, 
we arrive at
\begin{equation*}
\sup_{\mathbf{0}\neq (\bta_{h},\mathbf{v}_{h})\in \mathbb{X}_{h}\times \mathbf{Y}_{h}}\dfrac{\Lambda_{h,\bz_{h}}[(\bze_{h},\mathbf{w}_{h}), (\bta_{h},\mathbf{v}_{h})]}{\Vert(\bta_{h},\mathbf{v}_{h})\Vert_{h}}\,\geq\, \Big\{\alpha_{\Lambda,\mathtt{d}} - \dfrac{c_{\mathtt{em}}^{2}}{\nu} \,\|\bz_h\|_{\mathbf Y}\Big\}
\Vert(\boldsymbol{\zeta}_h, \bw_h)\Vert_{h}
\quad \forall ~(\bze_h,\bw_h)\in \mathcal{V}_{h},
\end{equation*}
from which, under the assumption \eqref{eq:as1} we arrive at \eqref{QQ2}, which ends the proof.
\end{proof}
Now we ready to show that mapp $\mathbf{S}_{\mathtt{d}}$ is well-defined or equivalently problem \eqref{eq:linProb} is uniquely solvable.
\begin{lemma}
\label{wp_S}
Let the assumption of Lemma \ref{l_iAC} be satisfied. Then, there exists a unique $(\bsi_{\star,h}, \bu_{\star,h})\in\mathbb{X}_h\times\mathbf{Y}_h$ solution to problem \eqref{eq:linProb}. In addition, there holds
 \begin{equation}\label{Stab}
 \Vert \mathbf{S}_{\mathtt{d}}(\mathbf{z}_{h})\Vert_{1,h}\,=\,\Vert\bu_{\star,h}\Vert_{1,h}\,\leq\, \dfrac{2}{\alpha_{\Lambda,\mathtt{d}}} \left(\Vert\mathbf{f}\Vert_{0,4/3}+c_{g}\Vert\mathbf{g}\Vert_{1/2,\partial\Omega} \right)\,.
 \end{equation}
\end{lemma}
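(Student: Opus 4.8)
The plan is to obtain the well-posedness of the linearized problem \eqref{eq:linProb} as a direct consequence of the classical Banach--Ne\v{c}as--Babu\v{s}ka theorem applied to the bilinear form $\Lambda_{h,\bz_h}$ and the functional $\mathrm{F}$ on the product space $\mathbb{X}_{0,h}\times\mathbf{Y}_h$, equipped with the norm $\Vert(\cdot,\cdot)\Vert_h$. Since the space is finite dimensional, it suffices to check three ingredients: boundedness of $\Lambda_{h,\bz_h}$, the inf-sup condition \eqref{QQ2}, and the ``transposed'' nondegeneracy condition (for every nonzero $(\bta_h,\bv_h)$ there is $(\bze_h,\bw_h)$ with $\Lambda_{h,\bz_h}[(\bze_h,\bw_h),(\bta_h,\bv_h)]\neq 0$).

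First I would record boundedness: gathering the bounds on $\mathcal{A}_h$ from \eqref{bun_Ah}, on $\mathcal{B}_h$ from Lemma \ref{l_bundBh}, and on $\mathcal{C}(\bz_h;\cdot,\cdot)$ from \eqref{boundC}, we get $\big|\Lambda_{h,\bz_h}[(\bze_h,\bw_h),(\bta_h,\bv_h)]\big|\le C(1+\Vert\bz_h\Vert_{1,h})\,\Vert(\bze_h,\bw_h)\Vert_h\,\Vert(\bta_h,\bv_h)\Vert_h$, which under \eqref{eq:as1} is a fixed constant. Next, the inf-sup condition is exactly \eqref{QQ2}, already proven in Lemma \ref{l_iAC} under the smallness assumption \eqref{eq:as1} on $\Vert\bz_h\Vert_{1,h}$, which is in force by hypothesis. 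For the transposed condition on the finite-dimensional square system, injectivity of the operator induced by the inf-sup condition plus equality of dimensions gives surjectivity, hence nondegeneracy; alternatively one reruns the argument of Lemma \ref{l_iAC} testing in the other slot, using that $\mathcal{A}_h$ is symmetric, $\mathcal{B}_h$ enters $\Lambda_h$ skew-symmetrically (cf. \eqref{eq:defGamF}), and the convective perturbation $\mathcal{C}(\bz_h;\cdot,\cdot)$ is controlled by the same bound \eqref{boundC}. The BNB theorem then yields existence and uniqueness of $(\bsi_{\star,h},\bu_{\star,h})\in\mathbb{X}_{0,h}\times\mathbf{Y}_h$ solving \eqref{eqF:sub1}, equivalently \eqref{eq:linProb}.

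For the stability estimate \eqref{Stab}, I would apply the inf-sup bound \eqref{QQ2} with $(\bze_h,\bw_h)=(\bsi_{\star,h},\bu_{\star,h})$, use \eqref{eqF:sub1} to replace $\Lambda_{h,\bz_h}[(\bsi_{\star,h},\bu_{\star,h}),(\bta_h,\bv_h)]$ by $\mathrm{F}(\bta_h,\bv_h)=\mathcal{G}(\bta_h)+\mathcal{F}(\bv_h)$, and then bound $|\mathcal{G}(\bta_h)|\le c_g\Vert\mathbf{g}\Vert_{1/2,\partial\Omega}\Vert\bta_h\Vert_{\mathbb{H},h}$ and $|\mathcal{F}(\bv_h)|\le\Vert\mathbf{f}\Vert_{0,4/3}\Vert\bv_h\Vert_{\mathbf{Y}}\lesssim\Vert\mathbf{f}\Vert_{0,4/3}\Vert\bv_h\Vert_{1,h}$ (the latter via the embedding $\mathbf{H}^1\subset\mathbf{L}^4$ used already in \eqref{boundC}), so that $\mathrm{F}$ has norm $\lesssim\Vert\mathbf{f}\Vert_{0,4/3}+c_g\Vert\mathbf{g}\Vert_{1/2,\partial\Omega}$. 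Dividing by $\tfrac{\alpha_{\Lambda,\mathtt{d}}}{2}\Vert(\bsi_{\star,h},\bu_{\star,h})\Vert_h$ gives $\Vert(\bsi_{\star,h},\bu_{\star,h})\Vert_h\le\tfrac{2}{\alpha_{\Lambda,\mathtt{d}}}(\Vert\mathbf{f}\Vert_{0,4/3}+c_g\Vert\mathbf{g}\Vert_{1/2,\partial\Omega})$, and since $\Vert\bu_{\star,h}\Vert_{1,h}\le\Vert(\bsi_{\star,h},\bu_{\star,h})\Vert_h$ the claimed bound on $\Vert\mathbf{S}_{\mathtt{d}}(\mathbf{z}_h)\Vert_{1,h}$ follows.

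I expect the only genuinely delicate point to be the proper handling of the constraint space $\mathbb{X}_{0,h}$: the inf-sup condition \eqref{QQ2} as stated is over the full $\mathbb{X}_h\times\mathbf{Y}_h$, so one must argue that testing against $\bta_h\in\mathbb{X}_{0,h}$ (rather than all of $\mathbb{X}_h$) still suffices---this is the standard observation that adding a multiple of $\mathbb{I}$ to any $\bta_h\in\mathbb{X}_h$ lands in $\mathbb{X}_{0,h}$ and changes neither $\mathcal{A}_h(\cdot,\bta_h)$ (because of the deviator), nor $\mathcal{C}(\bz_h;\cdot,\bta_h)$, nor $\mathcal{B}_h(\bta_h,\cdot)$ in the relevant way, and that the constraint on $\bsi_{\star,h}$ is recovered from the choice $r_{\bu}$ / \eqref{eq:cond.tr}; everything else is bookkeeping with constants already isolated in Lemmas \ref{l_ah}--\ref{l_bundBh} and estimate \eqref{boundC}.
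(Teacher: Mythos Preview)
Your proposal is correct and follows essentially the same route as the paper: invoke the Banach--Ne\v{c}as--Babu\v{s}ka theorem together with the inf-sup condition of Lemma~\ref{l_iAC} for well-posedness, then plug the solution into \eqref{QQ2}, replace the numerator via \eqref{eqF:sub1}, and bound $\mathrm{F}$ using the estimates for $\mathcal{G}$ and $\mathcal{F}$. You are in fact more careful than the paper---which dispatches well-posedness in a single sentence---in spelling out boundedness, the transposed nondegeneracy via finite-dimensionality, and the $\mathbb{X}_{0,h}$ versus $\mathbb{X}_h$ issue (a point the paper simply glosses over).
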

\begin{proof}
A straightforward application of the classical Babuška--Brezzi theory and Lemma \ref{l_iAC} implies that problem \eqref{eq:linProb} is well-posed. For the second part of proof, by combining \eqref{eqF:sub1} and Lemma  \ref{l_iAC} with considering $(\bze_{h},\mathbf{w}_{h})\,:=\,(\bsi_{\star,h},\bu_{\star,h})$,
 we readily obtain 
 \begin{align*}
\dfrac{\alpha_{\Lambda,\mathtt{d}}}{2}\,\Vert(\bsi_{\star,h},\bu_{\star,h})\Vert_{h}&\,\leq\,  \sup_{\mathbf{0}\neq (\bta_{h},\mathbf{v}_{h})\in \mathbb{X}_{h}\times \mathbf{Y}_{h}}\dfrac{\Lambda_{h,\bz_{h}}[(\bsi_{\star,h},\bu_{\star,h}), (\bta_{h},\mathbf{v}_{h})]}{\Vert(\bta_{h},\mathbf{v}_{h})\Vert_{h}}\\[1mm]
&\,=\,\sup_{\mathbf{0}\neq (\bta_{h},\mathbf{v}_{h})\in \mathbb{X}_{h}\times \mathbf{Y}_{h}}\dfrac{\mathrm{F}(\bta_{h},\bv_h)}{\Vert(\bta_{h},\mathbf{v}_{h})\Vert_{h}}\,\leq\, \left( c_{g}\Vert \textbf{g}\Vert_{1/2,\partial\Omega}+\Vert\mathbf{f}\Vert_{0,4/3}\right)\,,
 \end{align*}
 where the boundness of $\mathcal{G}$ and $\mathcal{F}$ was used in the last step, and this finishes the proof.
\end{proof}
We continue the analysis by establishing sufficient conditions under which $\mathbf{S}_{\mathtt{d}}$ maps a closed ball of
$\mathbf{Y}_{h}$ into itself.
 Let us define the set
\begin{equation*}
\widehat{\mathbf{Y}}_{h}\,:=\,\bigg\{ \textbf{z}_{h}\in \mathbf{Y}_{h}:\quad \Vert\textbf{z}_{h}\Vert_{1,h}\,\leq\, \dfrac{\nu\,\alpha_{\Lambda,\mathtt{d}}}{2c_{\mathtt{em}}^{2}}\bigg\}\,,
\end{equation*}
and state the following result.
\begin{lemma}\label{l_P2}
Assume that the data are sufficiently small so that
\begin{equation*}
\left( c_{g}\Vert \mathbf{g}\Vert_{1/2,\partial\Omega}+\Vert\mathbf{f}\Vert_{0,4/3}\right)\,\leq\, \dfrac{\nu\,\alpha_{\Lambda,\mathtt{d}}^{2}}{4c_{\mathtt{em}}^{2}}\,,
\end{equation*}
Then $\mathbf{S}_{\mathtt{d}}(\widehat{\mathbf{Y}}_{h})\subseteq \widehat{\mathbf{Y}}_{h}$.
\end{lemma}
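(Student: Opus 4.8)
The plan is to show the self-mapping property $\mathbf{S}_{\mathtt{d}}(\widehat{\mathbf{Y}}_{h})\subseteq \widehat{\mathbf{Y}}_{h}$ by combining the a priori stability bound from Lemma \ref{wp_S} with the smallness assumption on the data. First I would take an arbitrary $\mathbf{z}_{h}\in\widehat{\mathbf{Y}}_{h}$, so that by definition $\Vert\mathbf{z}_{h}\Vert_{1,h}\le \nu\,\alpha_{\Lambda,\mathtt{d}}/(2c_{\mathtt{em}}^{2})$. This is precisely the hypothesis \eqref{eq:as1} of Lemma \ref{l_iAC}, hence the hypothesis of Lemma \ref{wp_S} is met, so $\mathbf{S}_{\mathtt{d}}(\mathbf{z}_{h})=\bu_{\star,h}$ is well-defined and the stability estimate \eqref{Stab} holds:
\begin{equation*}
\Vert\mathbf{S}_{\mathtt{d}}(\mathbf{z}_{h})\Vert_{1,h}\,=\,\Vert\bu_{\star,h}\Vert_{1,h}\,\leq\,\dfrac{2}{\alpha_{\Lambda,\mathtt{d}}}\left(\Vert\mathbf{f}\Vert_{0,4/3}+c_{g}\Vert\mathbf{g}\Vert_{1/2,\partial\Omega}\right)\,.
\end{equation*}

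Next I would invoke the smallness hypothesis of the present lemma, namely $\left( c_{g}\Vert \mathbf{g}\Vert_{1/2,\partial\Omega}+\Vert\mathbf{f}\Vert_{0,4/3}\right)\leq \nu\,\alpha_{\Lambda,\mathtt{d}}^{2}/(4c_{\mathtt{em}}^{2})$, and substitute it into the right-hand side of the displayed estimate. This yields
\begin{equation*}
\Vert\mathbf{S}_{\mathtt{d}}(\mathbf{z}_{h})\Vert_{1,h}\,\leq\,\dfrac{2}{\alpha_{\Lambda,\mathtt{d}}}\cdot\dfrac{\nu\,\alpha_{\Lambda,\mathtt{d}}^{2}}{4c_{\mathtt{em}}^{2}}\,=\,\dfrac{\nu\,\alpha_{\Lambda,\mathtt{d}}}{2c_{\mathtt{em}}^{2}}\,,
\end{equation*}
which is exactly the defining inequality of the ball $\widehat{\mathbf{Y}}_{h}$. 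Hence $\mathbf{S}_{\mathtt{d}}(\mathbf{z}_{h})\in\widehat{\mathbf{Y}}_{h}$, and since $\mathbf{z}_{h}$ was arbitrary, $\mathbf{S}_{\mathtt{d}}(\widehat{\mathbf{Y}}_{h})\subseteq\widehat{\mathbf{Y}}_{h}$.

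This argument is essentially bookkeeping: the two thresholds appearing in $\widehat{\mathbf{Y}}_{h}$ and in the data assumption have been calibrated in advance precisely so that the stability constant $2/\alpha_{\Lambda,\mathtt{d}}$ times the data bound collapses to the radius of the ball. There is no real obstacle in this proof — all the analytic work (the inf-sup condition for $\Lambda_{h,\bz_h}$, the well-posedness of the linearized problem, and the resulting a priori bound) has already been carried out in Lemmas \ref{l_iAC} and \ref{wp_S}. The only point requiring a modicum of care is checking that membership in $\widehat{\mathbf{Y}}_{h}$ does indeed imply the hypothesis \eqref{eq:as1}, so that Lemma \ref{wp_S} is applicable; but this is immediate since the two conditions are literally the same inequality.
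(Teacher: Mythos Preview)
Your proof is correct and follows exactly the approach of the paper, which simply states that the result ``deduces straightly from a priori estimate stated by \eqref{Stab}.'' You have spelled out in full the bookkeeping that the paper leaves implicit: membership in $\widehat{\mathbf{Y}}_{h}$ guarantees the hypothesis of Lemma~\ref{wp_S}, and then \eqref{Stab} combined with the smallness assumption yields the radius bound.
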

\begin{proof}
It deduces straightly from a priori estimate stated by \eqref{Stab}.
\end{proof}
We now address the continuity properties of $ \mathbf{S}_{\mathtt{d}}$.
\begin{lemma}
\label{l_P3} 
For any $ \mathbf{z}_h , \mathbf{y}_h \in \mathbf{Y}_h $, there holds
\begin{equation}
\Vert\mathbf{S}_{\mathtt{d}}(\mathbf{z}_{h})-\mathbf{S}_{\mathtt{d}}(\mathbf{y}_{h})\Vert_{1,h}\,\leq\, \dfrac{4c_{\mathtt{em}}^{2}}{\nu\,\alpha_{\Lambda,\mathtt{d}}^2}\,\left(\Vert\mathbf{f}\Vert_{0,3/4}+c_{g}\Vert\mathbf{g}\Vert_{1/2,\partial\Omega} \right)\Vert\mathbf{z}_{h}-\mathbf{y}_{h}\Vert_{1,h}\,.
\end{equation}
\end{lemma}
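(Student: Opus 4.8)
The plan is to estimate the difference $\mathbf{S}_{\mathtt{d}}(\mathbf{z}_h) - \mathbf{S}_{\mathtt{d}}(\mathbf{y}_h)$ by writing down the linearized problems satisfied by the two solutions and subtracting them. Set $\bu_{\star,h} := \mathbf{S}_{\mathtt{d}}(\mathbf{z}_h)$ with companion $\bsi_{\star,h}$, and $\bu_{\diamond,h} := \mathbf{S}_{\mathtt{d}}(\mathbf{y}_h)$ with companion $\bsi_{\diamond,h}$. From \eqref{eqF:sub1} applied to $\mathbf{z}_h$ and to $\mathbf{y}_h$, subtracting gives, for all $(\bta_h,\bv_h)\in\mathbb{X}_{0,h}\times\mathbf{Y}_h$,
\[
\Lambda_{h,\bz_h}\big[(\bsi_{\star,h}-\bsi_{\diamond,h}, \bu_{\star,h}-\bu_{\diamond,h}), (\bta_h,\bv_h)\big] = \mathcal{C}(\mathbf{y}_h;\bu_{\diamond,h},\bv_h) - \mathcal{C}(\mathbf{z}_h;\bu_{\diamond,h},\bv_h),
\]
where I have used $\Lambda_{h,\bz_h} - \Lambda_{h,\by_h} = \mathcal{C}(\mathbf{z}_h;\cdot,\cdot) - \mathcal{C}(\mathbf{y}_h;\cdot,\cdot)$ together with $\Lambda_{h,\by_h}[(\bsi_{\diamond,h},\bu_{\diamond,h}),\cdot]=\mathrm{F}(\cdot)=\Lambda_{h,\bz_h}[(\bsi_{\star,h},\bu_{\star,h}),\cdot]$. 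The right-hand side equals $\mathcal{C}(\mathbf{y}_h-\mathbf{z}_h;\bu_{\diamond,h},\bv_h)$ by linearity of $\mathcal{C}$ in its first argument.

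Next I would invoke the global discrete inf-sup estimate \eqref{QQ2} of Lemma \ref{l_iAC}, which applies since $\mathbf{z}_h\in\widehat{\mathbf{Y}}_h$ satisfies \eqref{eq:as1}. Testing against $(\bze_h,\bw_h) := (\bsi_{\star,h}-\bsi_{\diamond,h}, \bu_{\star,h}-\bu_{\diamond,h})$ yields
\[
\frac{\alpha_{\Lambda,\mathtt{d}}}{2}\,\Vert(\bsi_{\star,h}-\bsi_{\diamond,h}, \bu_{\star,h}-\bu_{\diamond,h})\Vert_h \leq \sup_{\mathbf{0}\neq(\bta_h,\bv_h)}\frac{\big|\mathcal{C}(\mathbf{y}_h-\mathbf{z}_h;\bu_{\diamond,h},\bv_h)\big|}{\Vert(\bta_h,\bv_h)\Vert_h}.
\]
The boundedness bound \eqref{boundC} gives $\big|\mathcal{C}(\mathbf{y}_h-\mathbf{z}_h;\bu_{\diamond,h},\bv_h)\big| \leq \tfrac{c_{\mathtt{em}}^2}{\nu}\Vert\mathbf{y}_h-\mathbf{z}_h\Vert_{1,h}\Vert\bu_{\diamond,h}\Vert_{1,h}\Vert\bv_h\Vert_{1,h}$, and since $\Vert\bv_h\Vert_{1,h}\leq\Vert(\bta_h,\bv_h)\Vert_h$, the supremum is controlled by $\tfrac{c_{\mathtt{em}}^2}{\nu}\Vert\mathbf{y}_h-\mathbf{z}_h\Vert_{1,h}\Vert\bu_{\diamond,h}\Vert_{1,h}$. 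Bounding $\Vert\bu_{\star,h}-\bu_{\diamond,h}\Vert_{1,h}\leq\Vert(\bsi_{\star,h}-\bsi_{\diamond,h}, \bu_{\star,h}-\bu_{\diamond,h})\Vert_h$ and using the stability estimate \eqref{Stab} of Lemma \ref{wp_S} for $\Vert\bu_{\diamond,h}\Vert_{1,h}=\Vert\mathbf{S}_{\mathtt{d}}(\mathbf{y}_h)\Vert_{1,h}\leq\tfrac{2}{\alpha_{\Lambda,\mathtt{d}}}(\Vert\mathbf{f}\Vert_{0,4/3}+c_g\Vert\mathbf{g}\Vert_{1/2,\partial\Omega})$, I arrive at
\[
\Vert\mathbf{S}_{\mathtt{d}}(\mathbf{z}_h)-\mathbf{S}_{\mathtt{d}}(\mathbf{y}_h)\Vert_{1,h} \leq \frac{2}{\alpha_{\Lambda,\mathtt{d}}}\cdot\frac{c_{\mathtt{em}}^2}{\nu}\cdot\frac{2}{\alpha_{\Lambda,\mathtt{d}}}\big(\Vert\mathbf{f}\Vert_{0,4/3}+c_g\Vert\mathbf{g}\Vert_{1/2,\partial\Omega}\big)\Vert\mathbf{z}_h-\mathbf{y}_h\Vert_{1,h},
\]
which is exactly the claimed inequality (with the harmless typo $3/4$ vs $4/3$ in the exponent).

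The only genuinely delicate point is ensuring that the error equation is set up correctly: one must be careful that both $\bu_{\star,h}$ and $\bu_{\diamond,h}$ are tested against the \emph{same} bilinear form so that the difference of the nonlinear (convective) contributions is isolated cleanly on the right-hand side, and that the inf-sup constant used is that of $\Lambda_{h,\bz_h}$ (not $\Lambda_{h,\by_h}$) so that the hypothesis $\mathbf{z}_h\in\widehat{\mathbf{Y}}_h$ is what is required — the statement of Lemma \ref{l_P3} as written does not restrict $\mathbf{z}_h,\mathbf{y}_h$ to the ball, so implicitly one needs $\mathbf{z}_h\in\widehat{\mathbf{Y}}_h$ for \eqref{QQ2} to be available; this is the assumption under which the subsequent Banach fixed-point argument is carried out. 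Everything else is a routine chain of Cauchy--Schwarz/Hölder-type estimates already recorded in \eqref{boundC} and \eqref{Stab}.
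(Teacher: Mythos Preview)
Your proof is correct and follows essentially the same route as the paper's: subtract the two linearized problems to isolate $\mathcal{C}(\by_h-\bz_h;\bu_{\diamond,h},\cdot)$ on the right, apply the discrete inf-sup \eqref{QQ2} for $\Lambda_{h,\bz_h}$, bound $\mathcal{C}$ via \eqref{boundC}, and close with the stability estimate \eqref{Stab}. The only slip is that the third slot of $\mathcal{C}$ takes the tensor test function $\bta_h$ (not $\bv_h$), so the relevant inequality in the supremum step is $\Vert\bta_h\Vert_{\mathbb{H},h}\le\Vert(\bta_h,\bv_h)\Vert_h$; with this correction your constants and your observation about the implicit restriction $\bz_h,\by_h\in\widehat{\mathbf{Y}}_h$ match the paper exactly.
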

\begin{proof}
Given $\bz_{h}, \by_{h}\in \widehat{\mathbf{Y}}_{h}$ we let $\mathbf{S}_{\mathtt{d}}(\textbf{z}_{h})=\bu_{\star,h}$ and $\mathbf{S}_{\mathtt{d}}(\textbf{y}_{h})=\bu_{\circ,h}$, where $(\bsi_{\star,h}, \bu_{\star,h})$ and $(\bsi_{\circ,h}, \bu_{\circ,h})$ are the corresponding solutions of equation \eqref{eqF:sub1}. It follows from \eqref{eqF:sub1} that
\begin{equation*}
\Lambda_{h,\textbf{z}_{h}}[(\bsi_{\star,h}, \bu_{\star,h}), (\bta_{h},\bv_{h})]\,=\,\Lambda_{h,\textbf{y}_{h}}[(\bsi_{\circ,h}, \by_{\circ,h}), (\bta_{h},\bv_{h})]\,,\quad\quad\forall\, (\bta_{h},\bv_{h})\in\mathbb{X}_{h}\times \mathbf{Y}_{h}\,,
\end{equation*}

from which, according to the definitions of $\Lambda_{h,\bz_{h}}$ and $\Lambda_{h,\by_{h}}$ (cf. \eqref{eq:defhatGamF}), we have
\begin{equation*}
\Lambda_h[(\bsi_{\star,h}-\bsi_{\circ,h}, \bu_{\star,h}-\bu_{\circ,h}), (\bta_{h},\bv_{h})]\,=\,\mathcal{C}(\textbf{y}_{h},\bu_{\circ,h},\bta_{h})-\mathcal{C}(\textbf{z}_{h},\bu_{\star,h},\bta_{h})\,.
\end{equation*}
This result, combined with \eqref{eq:defGamF} by setting $(\bze_{h},\textbf{w}_{h}):=(\bsi_{\star,h}-\bsi_{\circ,h}, \bu_{\star,h}-\bu_{\circ,h})$, yields
\begin{equation*}
\begin{array}{c}
\Lambda_{h,\textbf{z}_{h}}[(\bsi_{\star,h}-\bsi_{\circ,h}, \bu_{\star,h}-\bu_{\circ,h}), (\bta_{h},\bv_{h})]\,=\,
\Lambda_h[(\bsi_{\star,h}-\bsi_{\circ,h}, \bu_{\star,h}-\bu_{\circ,h}), (\bta_{h},\bv_{h})]\\[2ex]
\,+\,\mathcal{C}(\textbf{z}_{h},\bu_{\star,h}-\bu_{\circ,h};\bta_{h})\\[2ex]
\,=\,\mathcal{C}(\textbf{y}_{h},\bu_{\circ,h};\bta_{h})-\mathcal{C}(\textbf{z}_{h},\bu_{\star,h};\bta_{h})+\mathcal{C}(\textbf{z}_{h},\bu_{\star,h}-\bu_{\circ,h};\bta_{h})\\[2ex]
\,=\,-\mathcal{C}(\bz_{h}-\by_{h},\bu_{\circ,h};\bta_{h})\,.
\end{array}
\end{equation*}
Now, we apply the discrete global inf--sup condition \eqref{QQ2} to the left-hand side of
the above equation and utilize the estimates \eqref{boundC} and \eqref{Stab}, to get
\begin{align*}
\dfrac{\alpha_{\Lambda,\mathtt{d}}}{2}\Vert(\bsi_{\star,h}-\bsi_{\circ,h}, \bu_{\star,h}-\bu_{\circ,h})\Vert_{h}&\,\leq\, \sup_{\textbf{0}\neq (\bta_{h},\bv_{h})\in \mathbb{X}_{h}\times \mathbf{Y}_{h}}\dfrac{\Lambda_{\textbf{z}_{h}}[(\bsi_{\star,h}-\bsi_{\circ,h}, \bu_{\star,h}-\bu_{\circ,h}), (\bta_{h},\bv_{h})]}{\Vert(\bta_{h},\bv_{h})\Vert_{h}}\nonumber\\[1mm]
&\,=\,\sup_{\textbf{0}\neq (\bta_{h},\bv_{h})\in \mathbb{X}_{h}\times \mathbf{Y}_{h}}\dfrac{\mathcal{C}(\textbf{y}_{h}-\textbf{z}_{h},\bu_{\circ,h};\bta_{h})}{\Vert(\bta_{h},\bv_{h})\Vert_{h}}\nonumber\\[1mm]
&\,\leq\, \dfrac{c_{\mathtt{em}}^{2}}{\nu}\,\Vert\textbf{z}_{h}-\textbf{y}_{h}\Vert_{1,h}\,\Vert\bu_{\circ,h}\Vert_{1,h}\nonumber\\[1mm]
&\,\leq\, \dfrac{2c_{\mathtt{em}}^{2}}{\nu\alpha_{\Lambda,\mathtt{d}}} \left(\Vert\textbf{f}\Vert_{0,3/4}+c_{g}\Vert\textbf{g}\Vert_{1/2,\partial\Omega} \right)\Vert\textbf{z}_{h}-\textbf{y}_{h}\Vert_{1,h}\,.
\end{align*}
So, thanks to the above inequality, we arrive at
\begin{align*}
\Vert\mathbf{S}_{\mathtt{d}}(\textbf{z}_{h})-\mathbf{S}_{\mathtt{d}}(\textbf{y}_{h})\Vert_{1,h}&\,=\,\Vert\bu_{\star,h}-\bu_{\circ,h}\Vert_{1,h}\\
&\,\leq\, \Vert(\bsi_{\star,h}-\bsi_{\circ,h}, \bu_{\star,h}-\bu_{\circ,h})\Vert_{h}\,\leq\, \dfrac{4}{\alpha_{\Lambda,\mathtt{d}}^2\nu} \left(\Vert\textbf{f}\Vert_{0,3/4}+c_g\Vert\textbf{g}\Vert_{1/2,\partial\Omega} \right)\Vert\textbf{z}_{h}-\textbf{y}_{h}\Vert_{1,h}\,,
\end{align*}
and thus ends the proof.
\end{proof}
The main result of this section is summarized in the following theorem.
\begin{Theorem}\label{WP_D}
Assume that the hypothesis of Lemma \ref{l_P2} hold.
Then, there exists $(\bsi_{h},\mathbf{u}_{h})\in \mathbb{X}_{0,h}\times \mathbf{Y}_{h}$ solution for Problem \ref{p:3}. Moreover, there holds 
\begin{equation}\label{StabG}
\Vert(\bsi_{h},\mathbf{u}_{h})\Vert_{h}\,\leq\, \dfrac{2}{\alpha_{\Lambda,\mathtt{d}}}\,
 \left(\Vert\mathbf{f}\Vert_{0,4/3}+c_{g}\Vert\mathbf{g}\Vert_{1/2,\partial\Omega} \right)\,.
\end{equation}
\end{Theorem}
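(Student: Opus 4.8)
The plan is to establish Theorem~\ref{WP_D} by verifying the hypotheses of the classical Banach fixed-point theorem for the operator $\mathbf{S}_{\mathtt{d}}$ on the closed ball $\widehat{\mathbf{Y}}_h$, and then translating the resulting fixed point back into a solution of Problem~\ref{p:3} together with its stability bound. First I would recall that, by Lemma~\ref{wp_S}, under the smallness hypothesis of Lemma~\ref{l_P2} the operator $\mathbf{S}_{\mathtt{d}}$ is well-defined on all of $\mathbf{Y}_h$ (since the data condition in Lemma~\ref{l_P2} is stronger than \eqref{eq:as1} restricted to $\widehat{\mathbf{Y}}_h$), and by Lemma~\ref{l_P2} it maps $\widehat{\mathbf{Y}}_h$ into itself. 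Since $\widehat{\mathbf{Y}}_h$ is a closed ball in the finite-dimensional (hence complete) space $\mathbf{Y}_h$, it is a complete metric space under the $\Vert\cdot\Vert_{1,h}$ metric.

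Next I would invoke Lemma~\ref{l_P3}, which gives the Lipschitz bound
\[
\Vert\mathbf{S}_{\mathtt{d}}(\mathbf{z}_h)-\mathbf{S}_{\mathtt{d}}(\mathbf{y}_h)\Vert_{1,h}\,\leq\, \dfrac{4c_{\mathtt{em}}^{2}}{\nu\,\alpha_{\Lambda,\mathtt{d}}^2}\,\big(\Vert\mathbf{f}\Vert_{0,4/3}+c_{g}\Vert\mathbf{g}\Vert_{1/2,\partial\Omega}\big)\Vert\mathbf{z}_h-\mathbf{y}_h\Vert_{1,h}\,,
\]
and observe that the smallness assumption of Lemma~\ref{l_P2}, namely $c_{g}\Vert\mathbf{g}\Vert_{1/2,\partial\Omega}+\Vert\mathbf{f}\Vert_{0,4/3}\leq \nu\,\alpha_{\Lambda,\mathtt{d}}^{2}/(4c_{\mathtt{em}}^{2})$, makes the Lipschitz constant at most $1$; in fact one wants it strictly less than $1$, which holds as long as the data inequality is strict (or one simply replaces the bound in Lemma~\ref{l_P2} by a strict one, or by $\le \tfrac12\nu\alpha_{\Lambda,\mathtt{d}}^2/c_{\mathtt{em}}^2$ without affecting the argument). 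Thus $\mathbf{S}_{\mathtt{d}}:\widehat{\mathbf{Y}}_h\to\widehat{\mathbf{Y}}_h$ is a contraction, and the Banach fixed-point theorem yields a unique $\mathbf{u}_h\in\widehat{\mathbf{Y}}_h$ with $\mathbf{S}_{\mathtt{d}}(\mathbf{u}_h)=\mathbf{u}_h$. By the definition of $\mathbf{S}_{\mathtt{d}}$ via the linearized problem \eqref{eq:linProb} and the equivalence noted after \eqref{eqF:sub1}, the pair $(\bsi_h,\mathbf{u}_h)\in\mathbb{X}_{0,h}\times\mathbf{Y}_h$ obtained by taking $\mathbf{z}_h=\mathbf{u}_h$ solves Problem~\ref{p:3}.

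Finally, for the stability estimate \eqref{StabG}, I would note that since $\mathbf{u}_h$ is a fixed point lying in $\widehat{\mathbf{Y}}_h$, it in particular satisfies $\Vert\mathbf{u}_h\Vert_{1,h}\leq \nu\alpha_{\Lambda,\mathtt{d}}/(2c_{\mathtt{em}}^2)$, so the hypothesis \eqref{eq:as1} of Lemma~\ref{l_iAC} holds with $\mathbf{z}_h=\mathbf{u}_h$; hence the global discrete inf--sup bound \eqref{QQ2} applies to $\Lambda_{h,\mathbf{u}_h}$, and testing \eqref{eqF:sub1} (with $\mathbf{z}_h=\mathbf{u}_h$) with the argument $(\bze_h,\mathbf{w}_h)=(\bsi_h,\mathbf{u}_h)$ exactly as in the proof of Lemma~\ref{wp_S} produces
\[
\tfrac{\alpha_{\Lambda,\mathtt{d}}}{2}\Vert(\bsi_h,\mathbf{u}_h)\Vert_h\,\leq\,\sup_{\mathbf{0}\neq(\bta_h,\mathbf{v}_h)}\frac{\mathrm{F}(\bta_h,\mathbf{v}_h)}{\Vert(\bta_h,\mathbf{v}_h)\Vert_h}\,\leq\, c_g\Vert\mathbf{g}\Vert_{1/2,\partial\Omega}+\Vert\mathbf{f}\Vert_{0,4/3}\,,
\]
which is \eqref{StabG}. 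The only genuinely delicate point is the contraction constant: one must be careful that the data bound in Lemma~\ref{l_P2} is compatible with a \emph{strict} contraction (otherwise one gets only non-expansiveness, which is not enough for Banach's theorem), and that uniqueness of $(\bsi_h,\mathbf{u}_h)$—if claimed—requires the uniqueness of the fixed point together with the unique solvability of the linearized problem from Lemma~\ref{wp_S}; everything else is a bookkeeping assembly of the lemmas already proved.
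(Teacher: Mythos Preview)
Your proposal is correct and follows essentially the same route as the paper: assemble Lemmas~\ref{l_P2} and~\ref{l_P3} to verify the hypotheses of the Banach fixed-point theorem for $\mathbf{S}_{\mathtt{d}}$ on $\widehat{\mathbf{Y}}_h$, and then read off the stability bound \eqref{StabG} directly from the argument of Lemma~\ref{wp_S} with $\mathbf{z}_h=\mathbf{u}_h$. Your remark about needing a \emph{strict} contraction constant is a genuine subtlety that the paper's proof glosses over, but otherwise the two arguments coincide.
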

\begin{proof}
The compactness of $\mathbf{S}_{\mathtt{d}}$ on space $\widehat{\mathbf{Y}}_{h}$ and the Lipschitz-continuity of $\mathbf{S}_{\mathtt{d}}$ are guaranteed by Lemmas \ref{l_P2} and \ref{l_P3}, respectivly. Hence, by applying the Banach fixed-point theorem directly to  Problem \ref{p:3}, one can conclude its existence and uniqueness. Furthermore, the stability result \eqref{StabG} is derived straightly from \eqref{Stab} stated in Lemma \ref{wp_S}.
\end{proof}
\section{Convergence Analysis}\label{sec5}
In this section, we are interested in deriving an a priori estimate for the error
\[
\Vert \bsi-\bsi_h\Vert_{\mathbb{H},h}\,,\quad \Vert\mathbf{u}-\mathbf{u}_h\Vert_{1,h}\,, \qan \Vert\mathbf{u}-\mathbf{u}_h\Vert_{0,\Omega}.
\] 
where $ (\bsi,\bu)\in \mathbb{X}\times\mathbf{Y} $ with $ \bu\in \widehat{\mathbf{Y}} $, is the unique solution of Problem \ref{p:2}, and $ (\bsi_{h},\bu_h)\in\mathbb{X}_{0,h}\times\mathbf{Y}_h $ with $ \bu_h\in \widehat{\mathbf{Y}}_h $ is the unique solution of Problem \ref{p:3}. As a byproduct of this, we also derive an a priori estimate for $ \Vert p -p_h\Vert_{0,\Omega} $, where $ p_h $ is the discrete pressure computed according to the postprocessing formula suggested by \eqref{As1a}, that is
\begin{equation}\label{qm}
	p_{h}\,:=\,-\dfrac{1}{2}\left(\operatorname{tr}(\bsi_{0h})+\operatorname{tr}(\mathbf{u}_{h}\otimes \mathbf{u}_{h})\right)-r_{\mathbf{u}_{h}}\,.
\end{equation}
To this end, we divide our result into suboptimal and optimal convergence for the velocity.
\subsection{Suboptimal convergence}
We begin by introducing the following lemmas, which will play
an essential role in the error analysis.
\begin{lemma}[trace inequality]
\label{l_tr}
Assume that the partition $\mathcal{K}_h$ satisfies the shape regularity assumptions $\mathbf{A1}$-$\mathbf{A4}$ stated in \cite{Wang14}. Then, there exists a constant $\mathcal{C}_{\mathtt{tr}}$ such that for any $K\in\mathcal{K}_h$ and edge/face $e\subset\partial K$, we have
\begin{equation}
\Vert v\Vert_{0,e}^{2}\,\leq\, \mathcal{C}_{\mathtt{tr}}\left(h_K^{-1}\Vert v\Vert_{0,K}^{2} +h_K \Vert \nabla v\Vert_{0,K}^{2}\right)\,.
\end{equation}
\end{lemma}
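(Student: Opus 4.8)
The plan is to obtain this trace inequality as a scaled form of the classical continuous embedding $\mathrm{H}^1(K)\hookrightarrow\mathrm{L}^2(\partial K)$, exploiting that the shape--regularity hypotheses $\mathbf{A1}$--$\mathbf{A4}$ guarantee that each $K\in\mathcal{K}_h$ is star--shaped with respect to a ball $B(\bar x_K,\rho_K)$ with $\rho_K\geq c_0\,h_K$. I would first prove the estimate for $v\in C^{1}(\overline{K})$ and then pass to $v\in\mathrm{H}^{1}(K)$ by density.

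Fix $K\in\mathcal{K}_h$, an edge/face $e\subset\partial K$, and the centre $\bar x_K$ above. Two geometric facts are used: $|x-\bar x_K|\leq h_K$ for all $x\in\overline K$, and, as a standard consequence of star--shapedness with respect to $B(\bar x_K,\rho_K)$, the pointwise bound $(x-\bar x_K)\cdot\mathbf n(x)\geq \rho_K$ for a.e.\ $x\in\partial K$ (in particular $(x-\bar x_K)\cdot\mathbf n(x)\geq 0$ on all of $\partial K$). Applying the divergence theorem to the field $v^{2}\,(x-\bar x_K)$ and using $\operatorname{div}(x-\bar x_K)=d$ gives
\[
\int_{\partial K} v^{2}\,\big((x-\bar x_K)\cdot\mathbf n\big)\,\mathrm{d}s\,=\,\int_{K}\Big(d\,v^{2}+2\,v\,\nabla v\cdot(x-\bar x_K)\Big)\,\mathrm{d}x\,.
\]
Discarding the nonnegative contributions of $\partial K\setminus e$ on the left, and bounding the right--hand side with Cauchy--Schwarz and Young's inequality, $2\,h_K\Vert v\Vert_{0,K}\Vert\nabla v\Vert_{0,K}\leq\Vert v\Vert_{0,K}^{2}+h_K^{2}\Vert\nabla v\Vert_{0,K}^{2}$, one is led to
\[
c_0\,h_K\int_{e}v^{2}\,\mathrm{d}s\,\leq\,(d+1)\,\Vert v\Vert_{0,K}^{2}+h_K^{2}\,\Vert\nabla v\Vert_{0,K}^{2}\,,
\]
and dividing through by $c_0 h_K$ delivers the assertion with $\mathcal{C}_{\mathtt{tr}}:=c_0^{-1}\max\{d+1,1\}$, which depends only on the shape--regularity parameters. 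The extension to arbitrary $v\in\mathrm{H}^{1}(K)$ follows because $C^{1}(\overline K)$ is dense in $\mathrm{H}^{1}(K)$ and both sides of the inequality are continuous for the $\mathrm{H}^{1}(K)$--norm (the left side via the continuity of the trace map on the fixed Lipschitz domain $K$).

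The only subtle ingredient is the geometric lower bound $(x-\bar x_K)\cdot\mathbf n(x)\geq\rho_K\gtrsim h_K$ on $\partial K$; this is exactly the content of the star--shapedness assumption embedded in $\mathbf{A1}$--$\mathbf{A4}$ of \cite{Wang14}. If in that reference the star--shapedness is postulated only for a decomposition of $K$ into a uniformly bounded number of sub--polygons, one applies the above argument on each piece containing (part of) $e$ and sums, which only inflates $\mathcal{C}_{\mathtt{tr}}$ by that bounded number; all remaining steps are routine.
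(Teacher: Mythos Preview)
The paper does not actually supply a proof of this lemma: it is stated as a known technical tool, with the shape--regularity hypotheses and the inequality itself attributed to \cite{Wang14} (where it appears as Lemma~A.3, proved via the auxiliary simplicial decomposition postulated in $\mathbf{A1}$--$\mathbf{A4}$). Your proposal therefore goes beyond what the paper does, by furnishing a self--contained argument.

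Your proof is correct. The divergence--theorem trick applied to $v^{2}(x-\bar x_K)$ is the standard route to a scaled trace inequality, and your verification of the key geometric bound $(x-\bar x_K)\cdot\mathbf n\geq\rho_K$ on $\partial K$ for domains star--shaped with respect to $B(\bar x_K,\rho_K)$ is the right ingredient; the remark in your final paragraph, reducing the general case to the star--shaped one by working on each simplex of the decomposition guaranteed by $\mathbf{A1}$--$\mathbf{A4}$, is exactly what is needed to match the hypotheses as stated. Compared with the proof in \cite{Wang14}, which maps each simplex to a reference element and invokes the trace theorem there before scaling back, your argument is more direct and yields an explicit constant $\mathcal{C}_{\mathtt{tr}}=c_0^{-1}(d+1)$ in the star--shaped case.
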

\begin{lemma}[inverse inequality]
\label{l_inv}
Let $K\subset\mathrm{R}^d$ be a d-simplex which has diameter $h_K$ and is shape regular. Assume that $p$ and $q$ be non-negative integers such that $p\leq q$. Then, there exists a
constant $\mathcal{C}_{\mathtt{inv}}$ such that
\begin{equation}
\Vert \varrho\Vert_{0,q,K}\,\leq\, \mathcal{C}_{\mathtt{inv}}\, h^{d(\frac{1}{q}-\frac{1}{p})}\,\Vert \varrho\Vert_{0,p,K}\,,
\end{equation}
for any polynomial $\varrho$ of degree no more than $k$.
\end{lemma}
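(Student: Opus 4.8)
The plan is to reduce the estimate to the reference $d$-simplex by an affine change of variables and then to exploit the equivalence of all norms on the finite-dimensional space $\mathrm{P}_k$. Let $\widehat K$ denote a fixed reference $d$-simplex, and let $F_K:\widehat K\to K$, $F_K(\widehat x)=B_K\widehat x+b_K$, be the affine bijection carrying $\widehat K$ onto $K$; given a polynomial $\varrho$ on $K$ of degree no more than $k$ we write $\widehat\varrho:=\varrho\circ F_K\in\mathrm{P}_k(\widehat K)$.

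First I would record how the Lebesgue norms scale under $F_K$: by the change of variables formula, for any real $r\ge 1$,
\[
\|\varrho\|_{0,r,K}\,=\,|\det B_K|^{1/r}\,\|\widehat\varrho\|_{0,r,\widehat K}\,.
\]
Next, since $\mathrm{P}_k(\widehat K)$ is finite dimensional and both $\|\cdot\|_{0,p,\widehat K}$ and $\|\cdot\|_{0,q,\widehat K}$ are norms on it (polynomials being continuous on the compact set $\widehat K$), there exists a constant $\widehat C=\widehat C(d,k,p,q)$, depending only on the reference element, with
\[
\|\widehat\varrho\|_{0,q,\widehat K}\,\le\,\widehat C\,\|\widehat\varrho\|_{0,p,\widehat K}\qquad\forall\,\widehat\varrho\in\mathrm{P}_k(\widehat K)\,.
\]
Combining the two displays gives
\[
\|\varrho\|_{0,q,K}\,=\,|\det B_K|^{1/q}\,\|\widehat\varrho\|_{0,q,\widehat K}\,\le\,\widehat C\,|\det B_K|^{1/q-1/p}\,\|\varrho\|_{0,p,K}\,.
\]

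It then remains to bound $|\det B_K|^{1/q-1/p}$ by a constant multiple of $h_K^{d(1/q-1/p)}$. Since $K$ is a shape-regular simplex, $|\det B_K|=|K|/|\widehat K|$ and there are constants $c_1,c_2>0$ depending only on the shape-regularity parameter such that $c_1\,h_K^{d}\le|\det B_K|\le c_2\,h_K^{d}$. Because $p\le q$, the exponent $1/q-1/p$ is non-positive, so the lower bound on $|\det B_K|$ yields $|\det B_K|^{1/q-1/p}\le c_1^{1/q-1/p}\,h_K^{d(1/q-1/p)}$; setting $\mathcal{C}_{\mathtt{inv}}:=\widehat C\,c_1^{1/q-1/p}$ completes the argument.

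The only point that requires care is the bookkeeping of the sign of $1/q-1/p$ — it is non-positive, which is precisely why a negative power of $h_K$ appears and why the statement is an \emph{inverse} estimate — together with checking that $\widehat C$ and the shape-regularity constants $c_1,c_2$ genuinely do not depend on $K$. This is where the hypothesis that $K$ is a shape-regular $d$-simplex enters: every such $K$ is the affine image of the single reference simplex $\widehat K$ with $\|B_K\|$ and $\|B_K^{-1}\|$ controlled by $h_K$ and the regularity parameter, so the reference-element constant $\widehat C$ serves uniformly for the whole family.
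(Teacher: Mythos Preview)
Your proof is correct and is the classical argument for this inverse inequality: pull back to the reference simplex, invoke norm equivalence on the finite-dimensional space $\mathrm{P}_k(\widehat K)$, and use shape regularity to convert $|\det B_K|$ into a power of $h_K$. The sign bookkeeping is handled properly.

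The paper itself does not supply a proof of this lemma; it is stated as a standard auxiliary result (in the same spirit as the trace inequality in Lemma~\ref{l_tr}) and used later in the duality argument of Theorem~\ref{t:opt}. So there is no ``paper's own proof'' to compare against --- your argument is exactly the textbook derivation one would expect (see, e.g., \cite[Lemma~1.138]{eg-AMS-2004} or \cite{Brezzi91}), and it fully justifies the stated estimate.
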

\begin{lemma}\label{ls}
For $\bsi\in\mathbb{H}^{r+1}(\Omega)$ and $r\in N$ such that $r\leq k$, there exist positive constants $ C_0 $ and $C_{\mathtt{s}}$ such that
\begin{equation}\label{eql5:e1}
\sum\limits_{K\in \mathcal{K}_{h}}h_K\Vert \bsi\mathbf{n}-\Pcalbf_{b}^{K}(\bsi\mathbf{n})\Vert_{0,\partial K}^{2}\leq C_0 \, h^{2(r+1)}\Vert \bsi\Vert_{r+1}^{2}\,,
\end{equation}
and
\begin{equation}\label{eql5:e2}
\big|\mathcal{S}^{K}(\Pcalbb_{h}\bsi,\bta_{h})\big|\leq  C_{\mathtt{s}}h^{r+1}\Vert\bsi\Vert_{r+1}\Vert\bta_{h}\Vert_{\mathbb{H},h},
\end{equation}
for all $\bta_{h}\in \mathbb{X}_{h}$
\end{lemma}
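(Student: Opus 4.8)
The plan is to prove the two estimates \eqref{eql5:e1} and \eqref{eql5:e2} separately, both reducing to the approximation property of the boundary projection $\Pcalbf_b^K$ combined with the trace inequality (Lemma \ref{l_tr}) and the interior approximation property from Lemma \ref{l1}.

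For \eqref{eql5:e1}, I would start on a fixed element $K$ and bound $\Vert \bsi\bn - \Pcalbf_b^K(\bsi\bn)\Vert_{0,\partial K}$ by comparing $\Pcalbf_b^K(\bsi\bn)$ with $(\Pcalbb_{0,k}^K\bsi)\bn$: since $\Pcalbf_b^K$ is the $L^2(\partial K)$-orthogonal projection onto $\mathbf P_k(\partial K)$, it is the best approximation, so $\Vert \bsi\bn - \Pcalbf_b^K(\bsi\bn)\Vert_{0,\partial K} \le \Vert \bsi\bn - (\Pcalbb_{0,k}^K\bsi)\bn\Vert_{0,\partial K} \le \Vert \bsi - \Pcalbb_{0,k}^K\bsi\Vert_{0,\partial K}$ (using $|\bn|=1$). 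Then apply the trace inequality (Lemma \ref{l_tr}) to $\bsi - \Pcalbb_{0,k}^K\bsi$ to get the bound by $h_K^{-1}\Vert \bsi - \Pcalbb_{0,k}^K\bsi\Vert_{0,K}^2 + h_K\Vert\nabla(\bsi-\Pcalbb_{0,k}^K\bsi)\Vert_{0,K}^2$; multiplying by $h_K$, summing over $K$, and invoking the approximation estimate \eqref{eq_Q0} of Lemma \ref{l1} with $m=0$ and $m=1$ (at regularity level $s=r+1\le k+1$) yields $h^{2(r+1)}\vert\bsi\vert_{r+1}^2$, hence \eqref{eql5:e1}. One subtlety to watch: the statement of Lemma \ref{l1} requires $s\le k$ for \eqref{eq_Q0}, so I would either use that the claimed bound in \eqref{eql5:e1} has $r\le k$, giving $s=r+1\le k+1$, or silently assume the natural extension to $s\le k+1$; I would phrase the argument so this is consistent with how Lemma \ref{l1} is applied elsewhere.

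For \eqref{eql5:e2}, recall $\mathcal{S}^K(\bze,\bta) = h_K\langle \bze_0\bn - \bze_b\bn,\, \bta_0\bn - \bta_b\bn\rangle_{0,\partial K}$. Setting $\bze = \Pcalbb_h\bsi = \{\Pcalbb_{0,k}^K\bsi,\, \Pcalbf_{b,k}^K(\bsi\bn)\otimes\bn\}$, the first slot of the stabilizer becomes $(\Pcalbb_{0,k}^K\bsi)\bn - \Pcalbf_{b,k}^K(\bsi\bn)$. By Cauchy--Schwarz on $\partial K$,
\begin{equation*}
\big|\mathcal{S}^K(\Pcalbb_h\bsi,\bta_h)\big| \le h_K^{1/2}\Vert (\Pcalbb_{0,k}^K\bsi)\bn - \Pcalbf_{b,k}^K(\bsi\bn)\Vert_{0,\partial K}\cdot h_K^{1/2}\Vert \bta_{0h}\bn - \bta_{bh}\bn\Vert_{0,\partial K},
\end{equation*}
and the second factor is exactly $\big(\mathcal{S}^K(\bta_h,\bta_h)\big)^{1/2} \le \Vert\bta_h\Vert_{\mathbb{H},h}$ (restricted to $K$). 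For the first factor I would insert $\bsi\bn$: $\Vert (\Pcalbb_{0,k}^K\bsi)\bn - \Pcalbf_{b,k}^K(\bsi\bn)\Vert_{0,\partial K} \le \Vert (\Pcalbb_{0,k}^K\bsi)\bn - \bsi\bn\Vert_{0,\partial K} + \Vert \bsi\bn - \Pcalbf_{b,k}^K(\bsi\bn)\Vert_{0,\partial K} \le 2\Vert \bsi\bn - (\Pcalbb_{0,k}^K\bsi)\bn\Vert_{0,\partial K}$, again using the best-approximation property of $\Pcalbf_{b,k}^K$; then multiply by $h_K^{1/2}$ and reuse the estimate from the first part. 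Summing over $K$ with Cauchy--Schwarz in the element index and inserting \eqref{eql5:e1} gives $C_{\mathtt s}h^{r+1}\Vert\bsi\Vert_{r+1}\Vert\bta_h\Vert_{\mathbb{H},h}$.

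The main obstacle is purely bookkeeping rather than conceptual: keeping the powers of $h_K$ balanced throughout (the stabilizer carries a weight $h_K$, the trace inequality produces $h_K^{-1}$ and $h_K^{+1}$ terms, and the final target \eqref{eql5:e1} has the clean weight $h_K$ after one cancellation), and making sure the regularity index passed to Lemma \ref{l1} is admissible. I would also need the minor observation that $|\bta\bn| \le |\bta|$ pointwise so that norms of $\bta\bn$ on $\partial K$ are controlled by norms of $\bta$ on $\partial K$; this is elementary but should be stated. No deep estimate is needed beyond Lemmas \ref{l_tr} and \ref{l1} and the orthogonal-projection best-approximation inequality.
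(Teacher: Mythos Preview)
Your proposal is correct and is the standard argument. The paper itself does not give a proof: it simply cites \cite[eqs.~(33) and (34)]{Gharibi21} for the vector versions of \eqref{eql5:e1}--\eqref{eql5:e2} and remarks that the tensor case follows analogously. Your write-up is precisely that analogous argument---best-approximation of $\Pcalbf_{b,k}^K$, the trace inequality (Lemma~\ref{l_tr}), and the interior approximation estimate (Lemma~\ref{l1})---so there is nothing to compare; you have supplied the details the paper defers to the reference. The one caveat you already flagged is real: Lemma~\ref{l1} as stated asks $s\le k$, whereas you need $s=r+1\le k+1$; this is the usual range for an $L^2$-projection onto $\mathbb{P}_k$ and the paper itself relies on it elsewhere (e.g.\ in the proof of Theorem~\ref{t:opt}), so it is a harmless inconsistency in the statement of Lemma~\ref{l1} rather than a gap in your argument.
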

\begin{proof}
The proof of vector versions of \eqref{eql5:e1} and \eqref{eql5:e2} can be found in \cite[eqs. (33) and (34)]{Gharibi21} and by following the similar arguments can be easily conclude them.
\end{proof}
Thanks to the projection error estimate stated in Lemma \ref{l1}, we only analyze the error functions defined by
\begin{equation*}
\btet_h := \Pcalbb_{h}\bsi-\bsi_{h}=\big\{  \Pcalbb_{0}^{K}\bsi - \bsi_{0h},  (\Pcalbf_{b}^{K}(\bsi \mathbf{n}_{e})-\bsi_{bh}\mathbf{n}_{e})\otimes \mathbf{n}_{e}\big\} \qan  \theta_h :=\boldsymbol{\mathcal{P}}_h \mathbf{u}-\mathbf{u}_h .
\end{equation*}
As the primary step in convergence analysis, we derive error equation as follows:
\begin{itemize}
\item[•]
 For any $\bta_{h}\in \mathbb{X}_{h}$, from the first equation of  Problem \ref{p:2}, we deduce\\
\begin{equation*}
	\begin{array}{c}
\mathcal{A}_{h}(\btet_h,~\bta_{h})+\mathcal{B}_{h}(\bta_{h},\theta_h)\,=\,\big[ \mathcal{A}_{h}(\Pcalbb_{h}\bsi,~\bta_{h})+\mathcal{B}_{h}(\bta_{h},\boldsymbol{\mathcal{P}}_{h}\mathbf{u}) \big]-\big[ \mathcal{A}_{h}(\bsi_{h},~\bta_{h})+\mathcal{B}_{h}(\bta_{h},\mathbf{u}_{h})\big]\\[2ex]
 \,=\,\mathcal{A}(\Pcalbb_{h}\bsi,~\bta_{h})+\mathcal{S}(\Pcalbb_{h}\bsi,~\bta_{h})+\mathcal{B}_{h}(\bta_{h},\boldsymbol{\mathcal{P}}_{h}\mathbf{u})+\mathcal{C}(\mathbf{u}_{h};\mathbf{u}_{h},\bta_{h})-\mathcal{G}(\bta_{h})\,.
 	\end{array}
\end{equation*}

We test the first equation of Problem \ref{p:1}, i.e., $\bsi^{\mathtt{d}}+(\mathbf{u}\otimes\mathbf{u})^{\mathtt{d}}-\nu\boldsymbol{\nabla} \mathbf{u}=\textbf{0}$ against $\bta_{0h}$, where $\bta_{h}\in \mathbb{X}_{h}$, and add it on the right-hand side of the above equation as well as use the L$^{2}$-orthogonality property of projections $\Pcalbb_{0}$, $\boldsymbol{\mathcal{P}}_{h}$, along with the definition of discrete weak divergence operator (cf. Definition \ref{d3}) and Green formula, to obtain  
\begin{equation*}
\begin{array}{c}
\mathcal{A}_{h}(\btet_h,~\bta_{h})+\mathcal{B}_{h}(\bta_{h},\theta_h)
\,=\,\dfrac{1}{\nu}\disp\int_{\Omega}(\Pcalbb_{0}\bsi)^{\mathtt{d}}:\bta_{0h}^{\mathtt{d}}+\mathcal{S}(\Pcalbb_{h}\bsi,~\bta_{h})+\mathcal{B}_{h}(\bta_{h},\boldsymbol{\mathcal{P}}_{h}\mathbf{u})+\mathcal{C}(\mathbf{u}_{h};\mathbf{u}_{h},\bta_{h})\\[2ex]
\,-\,\Big(\mathcal{G}(\bta_{h})+ \dfrac{1}{\nu}\disp\int_{\Omega}\bsi^{\mathtt{d}}:\bta_{0h}^{\mathtt{d}}-\disp\int_{\Omega}\boldsymbol{\nabla} \mathbf{u}:\bta_{0h}+\dfrac{1}{\nu}\disp\int_{\Omega}(\mathbf{u}\otimes\mathbf{u})^{\mathtt{d}}:\bta_{0h}\Big)\\[2.5ex]
\,=\,\mathcal{S}(\Pcalbb_{h}\bsi,~\bta_{h})+\Big(\mathcal{C}(\mathbf{u}_{h};\mathbf{u}_{h},\bta_{h})-\mathcal{C}(\mathbf{u};\mathbf{u},\bta_{h})\Big)\\[2ex]
\,+\,\disp\sum\limits_{K\in \mathcal{K}_{h}}\langle(\bta_{0h}-\bta_{bh})\mathbf{n},~\mathbf{u}-\boldsymbol{\mathcal{P}}_{h}\mathbf{u}\rangle_{\partial K}\,.
\end{array}
\end{equation*}
\item[•]
An application of the second equation of Problems \ref{p:1}, that is, $\operatorname{\textbf{div}} (\bsi)+\mathbf{f}=0$ with test function $\bv_{h}\in \mathbf{Y}_{h}$ and Definition \ref{d3} and Green formula, gives
\begin{equation}\label{eeq:pa}
	\mathcal{B}_{h}(\btet_h,\bv_{h})
	\,=\,\mathcal{B}_{h}(\Pcalbb_{h}\bsi,\bv_{h})-\mathcal{B}(\bsi,\bv_{h}) =\sum\limits_{K\in \mathcal{K}_{h}}\langle \Pcalbf_{b}^{K}(\bsi\mathbf{n})-\bsi\mathbf{n},\bv_{h}\rangle_{\partial K}\,.
\end{equation}
\end{itemize}
Hence, the error equation reads as follows.
\begin{prob}[error problem]
\label{p:5}
Find error functions $\btet_h\in\mathbb{X}_h$ and $\theta_h\in \mathbf{Y}_h$ such that
\begin{equation}\label{ErrEq}
\left\{\begin{array}{rcll}
\mathcal{A}_{h}(\btet_h,\bta_{h})+\mathcal{B}_{h}(\bta_{h},\theta_h)
+\big[\mathcal{C}(\mathbf{u};\mathbf{u},\bta_{h})-\mathcal{C}(\mathbf{u}_{h};\mathbf{u}_{h},\bta_{h})\big]&=&\mathcal{L}_1((\bsi,\mathbf{u});\bta_{h})\,,  \\[3mm]
\mathcal{B}_{h}(\btet_h,\mathbf{v}_{h})&=&\mathcal{L}_2(\bsi,\mathbf{v}_{h})\,, 
\end{array}\right.
\end{equation}
in which
\begin{align*}
\mathcal{L}_1((\bsi,\mathbf{u});\bta_{h})&:=\mathcal{S}(\Pcalbb_{h}\bsi,~\bta_{h})+\sum\limits_{K\in \mathcal{K}_{h}}\langle(\bta_{0h}-\bta_{bh})\mathbf{n},~\mathbf{u}-\boldsymbol{\mathcal{P}}_{h}\mathbf{u}\rangle_{\partial K}\,,\\
\mathcal{L}_2(\bsi,\mathbf{v}_{h})&:=\sum\limits_{K\in \mathcal{K}_{h}}\langle \Pcalbf_{b}^{K}(\bsi\mathbf{n})-\bsi\mathbf{n},\mathbf{v}_{h}\rangle_{\partial K}\,.
\end{align*}
for all $\bta_{h}\in \mathbb{X}_{h}$ and $\mathbf{v}_{h}\in \mathbf{Y}_{h}$.
\end{prob}
The following result states the estimate for error terms appeared in Problem \ref{p:5}.
\begin{lemma}\label{l_1}
Let $\bsi\in \mathbb{X}\cap\mathbb{H}^{r+1}(\Omega)$, $\mathbf{u}\in  \mathbf{Y}\cap\mathbf{H}^{r+2}(\Omega)$ and $ r\leq k $. Then, there exist the constants $ \mathcal{C}_1 $ and $ \mathcal{C}_2 $ such that
\begin{equation*}
	\begin{array}{rcll}
\big|\mathcal{L}_1((\bsi,\mathbf{u});\bta_{h})\big|
\,&\leq&\,  \mathcal{C}_{1}\, h^{r+1}\left(\|\bsi\|_{r+1}+\|\mathbf{u}\|_{r+2}\right)\| \bta_{h}\|_{\mathbb{H},h}&\quad\forall\, \bta_{h}\in \mathbb{X}_{h}\,,\\[2ex]
\big|\mathcal{L}_2(\bsi;\mathbf{v}_{h})\big|\,&\leq&\, \mathcal{C}_{2}\, h^{r+1}\|\bsi\|_{r+1}\|\mathbf{v}_{h}\|_{1,h}&\quad\forall\, \mathbf{v}_{h}\in \mathbf{Y}_{h}\,.
	\end{array}
\end{equation*}
\end{lemma}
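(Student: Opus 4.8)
The plan is to bound each of the two functionals separately, using the definitions of $\mathcal{L}_1$ and $\mathcal{L}_2$ together with the approximation properties collected in Lemmas~\ref{l1} and~\ref{ls} and the discrete trace inequality (Lemma~\ref{l_tr}).

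\textbf{Estimate for $\mathcal{L}_1$.} Recall $\mathcal{L}_1((\bsi,\mathbf{u});\bta_h)=\mathcal{S}(\Pcalbb_h\bsi,\bta_h)+\sum_{K}\langle(\bta_{0h}-\bta_{bh})\mathbf{n},\mathbf{u}-\boldsymbol{\mathcal{P}}_h\mathbf{u}\rangle_{\partial K}$. For the first term I would invoke \eqref{eql5:e2} directly, giving $|\mathcal{S}(\Pcalbb_h\bsi,\bta_h)|\le C_{\mathtt{s}}h^{r+1}\|\bsi\|_{r+1}\|\bta_h\|_{\mathbb{H},h}$. For the second term, apply Cauchy--Schwarz edge-by-edge and insert the scaling weight $h_K$: $\sum_K|\langle(\bta_{0h}-\bta_{bh})\mathbf{n},\mathbf{u}-\boldsymbol{\mathcal{P}}_h\mathbf{u}\rangle_{\partial K}|\le\big(\sum_K h_K\|(\bta_{0h}-\bta_{bh})\mathbf{n}\|_{0,\partial K}^2\big)^{1/2}\big(\sum_K h_K^{-1}\|\mathbf{u}-\boldsymbol{\mathcal{P}}_h\mathbf{u}\|_{0,\partial K}^2\big)^{1/2}$. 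The first factor is exactly $\big(\sum_K\mathcal{S}^K(\bta_h,\bta_h)\big)^{1/2}\le\|\bta_h\|_{\mathbb{H},h}$ by the definition \eqref{def:Stab} of the stabilizer. For the second factor I would use the trace inequality of Lemma~\ref{l_tr} on $\mathbf{u}-\boldsymbol{\mathcal{P}}_h\mathbf{u}$, namely $\|\mathbf{u}-\boldsymbol{\mathcal{P}}_h\mathbf{u}\|_{0,\partial K}^2\lesssim h_K^{-1}\|\mathbf{u}-\boldsymbol{\mathcal{P}}_h\mathbf{u}\|_{0,K}^2+h_K\|\boldsymbol{\nabla}(\mathbf{u}-\boldsymbol{\mathcal{P}}_h\mathbf{u})\|_{0,K}^2$, and then the projection estimate \eqref{eq_Q0A} (applied with $s=r+2$ to the $\mathrm{L}^2$ part, and an analogous $\mathrm{H}^1$-approximation bound for the gradient part) to get $\sum_K h_K^{-1}\|\mathbf{u}-\boldsymbol{\mathcal{P}}_h\mathbf{u}\|_{0,\partial K}^2\lesssim h^{2(r+1)}\|\mathbf{u}\|_{r+2}^2$. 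Collecting both contributions yields the claimed bound with $\mathcal{C}_1$ depending on $C_{\mathtt{s}}$, $\mathcal{C}_{\mathtt{tr}}$, and the projection constants.

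\textbf{Estimate for $\mathcal{L}_2$.} Here $\mathcal{L}_2(\bsi;\mathbf{v}_h)=\sum_K\langle\Pcalbf_b^K(\bsi\mathbf{n})-\bsi\mathbf{n},\mathbf{v}_h\rangle_{\partial K}$. Since $\Pcalbf_b^K$ is the $\mathrm{L}^2$-projection onto $\mathbf{P}_k(\partial K)$, I can subtract any piecewise-constant-per-edge quantity from $\mathbf{v}_h$ without changing the pairing; in particular I replace $\mathbf{v}_h|_{\partial K}$ by $\mathbf{v}_h|_{\partial K}-\overline{\mathbf{v}_h}$ (or exploit the jump structure to pass to $\llbracket\mathbf{v}_h\rrbracket$ on interior edges, boundary edges contributing only through $\mathbf{g}$-data which is already matched). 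Then Cauchy--Schwarz with the $h_K$-weight gives $|\mathcal{L}_2|\le\big(\sum_K h_K\|\Pcalbf_b^K(\bsi\mathbf{n})-\bsi\mathbf{n}\|_{0,\partial K}^2\big)^{1/2}\big(\sum_K h_K^{-1}\|\,\cdot\,\|_{0,\partial K}^2\big)^{1/2}$. The first factor is bounded by $\sqrt{C_0}\,h^{r+1}\|\bsi\|_{r+1}$ via \eqref{eql5:e1}, and the second factor is controlled by $\|\mathbf{v}_h\|_{1,h}$ using the trace inequality together with the definition of the $\|\cdot\|_{1,h}$-norm (the jump term is precisely $\sum_e h_e^{-1}\|\Pcalbf_{b,k}\llbracket\mathbf{v}_h\rrbracket\|_{0,e}^2$, and the interior contributions are absorbed into $\sum_K|\mathbf{v}_h|_{1,K}^2$). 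This gives the bound with $\mathcal{C}_2$ depending on $C_0$ and $\mathcal{C}_{\mathtt{tr}}$.

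\textbf{Main obstacle.} The routine parts are the Cauchy--Schwarz splittings and the invocation of Lemmas~\ref{l1}, \ref{ls}. The delicate point is bookkeeping the correct scaling powers of $h_K$ so that the stabilizer seminorm and the $\|\cdot\|_{1,h}$-norm appear with their natural weights, and — for $\mathcal{L}_2$ — correctly exploiting that $\mathbf{v}_h$ can be tested against the projection error only through its jumps (or mean-oscillation), since a direct estimate of $\sum_K h_K^{-1}\|\mathbf{v}_h\|_{0,\partial K}^2$ would not be controlled by $\|\mathbf{v}_h\|_{1,h}$ without such a reduction. Handling the boundary edges (where $\mathbf{v}_h$ is single-valued) requires noting that there $\mathbf{v}_h=\boldsymbol{\mathcal{P}}_{b,k}\mathbf{g}$ is already accounted for in the consistency setup, so those terms either vanish or are absorbed analogously. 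Once the jump reduction is in place, the estimate closes immediately.
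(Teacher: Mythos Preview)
The paper does not actually prove this lemma: it simply writes ``See \cite[Lemma 4]{Gharibi21}.'' Your proposal therefore cannot be compared line-by-line against an in-paper argument, but it is the standard way such estimates are obtained in the WG literature and is essentially what the cited reference does. The treatment of $\mathcal{L}_1$ is exactly right: \eqref{eql5:e2} handles the stabilizer part, and the weighted Cauchy--Schwarz plus trace/projection estimates handle the boundary pairing, with the first factor collapsing to the stabilizer seminorm.

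For $\mathcal{L}_2$ your core idea---exploiting the orthogonality of $\Pcalbf_b^K(\bsi\mathbf n)-\bsi\mathbf n$ to $\mathbf P_k(e)$ so as to subtract a constant from $\mathbf v_h$---is correct and is precisely how the paper itself later bounds the analogous term $\mathrm{I}_7$ in the proof of Theorem~\ref{t:opt}: one replaces $\mathbf v_h$ by $\mathbf v_h-\bar{\mathbf v}_h^K$ (the cell mean), then trace plus Poincar\'e give $h_K^{-1}\|\mathbf v_h-\bar{\mathbf v}_h^K\|_{0,\partial K}^2\lesssim|\mathbf v_h|_{1,K}^2$, which is dominated by $\|\mathbf v_h\|_{1,h}^2$. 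One small correction: your remark that on boundary edges ``$\mathbf v_h=\boldsymbol{\mathcal P}_{b,k}\mathbf g$'' is not right---$\mathbf v_h\in\mathbf Y_h$ is an arbitrary test function with no boundary constraint. But this does not matter, because the mean-subtraction argument is element-wise and treats boundary and interior edges uniformly; you never need to invoke the data $\mathbf g$ here. With that caveat, your proof closes exactly as you outline.
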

\begin{proof}
See \cite[Lemma 4]{Gharibi21}.
\end{proof}
Now we are in position of establishing the main result of this section, namely, the suboptimal rate
of convergence for the weak Galerkin scheme provided by Problem \ref{p:3}.
\begin{Theorem}
\label{t_ERR}
 Assume that the data satisfy
\begin{equation}\label{AS_1}
\dfrac{4}{\nu\alpha_{\Lambda}\alpha_{\Lambda,\mathtt{d}}} \left(\Vert\textbf{f}\Vert_{0,4/3}+c_{g}\Vert\textbf{g}\Vert_{1/2,\partial\Omega}\right)\,\leq\, \dfrac{1}{2}\,,
\end{equation}
and let $(\bsi_{h},\bu_{h})\in\mathbb{X}_{h}\times \mathbf{Y}_{h}$ be the solution to Problem \ref{p:3} and $(\bsi,\bu)\in\mathbb{X}\times \mathbf{Y}$ be the solution of Problem \ref{p:2} satisfying the regularity conditions $\bsi\in\mathbb{H}^{k+1}(\Omega)$ and $\mathbf{u}\in \mathbf{H}^{k+2}(\Omega)$. Then, for all $k\in\mathrm{N}_{0}$, the following estimate holds
\begin{equation}\label{ER_SU}
\Vert\Pcalbb_{h}\bsi-\bsi_{h}\Vert_{\mathbb{H},h}+\Vert\boldsymbol{\mathcal{P}}_{h}\mathbf{u}-\mathbf{u}_{h}\Vert_{1,h}\,\leq\, \mathcal{C}_{\mathtt{sub}}\, h^{k+1}\left(\|\bsi\|_{k+1}+\|\mathbf{u}\|_{k+2}\right)\,.
\end{equation}
\end{Theorem}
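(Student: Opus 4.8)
The plan is to combine the discrete global inf--sup condition for $\Lambda_{h,\bz_h}$ with the error equation in Problem \ref{p:5}, treating the convective terms perturbatively so that the small-data assumption \eqref{AS_1} lets us absorb them into the left-hand side. First I would observe that, since $\btet_h\in\mathbb{X}_{0,h}$ (because both $\Pcalbb_h\bsi$ and $\bsi_h$ have trace-free interior component by the condition in $\mathbb{X}_{0,h}$ and the definition of $\Pcalbb_h$) and $\theta_h\in\mathbf{Y}_h$, the pair $(\btet_h,\theta_h)$ is an admissible test/trial element. Rewriting the first two lines of \eqref{ErrEq} in terms of $\Lambda_h$ via \eqref{eq:defGamF}, the error problem reads
\[
\Lambda_h[(\btet_h,\theta_h),(\bta_h,\bv_h)] = \mathcal{L}_1((\bsi,\mathbf{u});\bta_h) + \mathcal{L}_2(\bsi,\bv_h) + \big[\mathcal{C}(\mathbf{u}_h;\mathbf{u}_h,\bta_h)-\mathcal{C}(\mathbf{u};\mathbf{u},\bta_h)\big].
\]
The convective difference is split in the standard bilinear way as $\mathcal{C}(\mathbf{u}_h;\mathbf{u}_h-\mathbf{u},\bta_h)+\mathcal{C}(\mathbf{u}_h-\mathbf{u};\mathbf{u},\bta_h)$, and then I would rewrite $\mathbf{u}_h-\mathbf{u} = (\mathbf{u}_h-\boldsymbol{\mathcal{P}}_h\mathbf{u})+(\boldsymbol{\mathcal{P}}_h\mathbf{u}-\mathbf{u}) = -\theta_h + (\boldsymbol{\mathcal{P}}_h\mathbf{u}-\mathbf{u})$, so that the $\theta_h$-dependent part of the convective term can be moved to the left-hand side to build $\Lambda_{h,\mathbf{u}_h}$ (using $\|\mathbf{u}_h\|_{1,h}$ controlled by the stability bound \eqref{StabG}, which under \eqref{AS_1} puts $\mathbf{u}_h$ inside $\widehat{\mathbf{Y}}_h$), while the projection-error part $\boldsymbol{\mathcal{P}}_h\mathbf{u}-\mathbf{u}$ stays on the right and is bounded by Lemma \ref{l1}.

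Next I would apply the inf--sup estimate \eqref{QQ2} with $\bz_h=\mathbf{u}_h$ (legitimate since $\mathbf{u}_h\in\widehat{\mathbf{Y}}_h$), which gives
\[
\frac{\alpha_{\Lambda,\mathtt{d}}}{2}\,\Vert(\btet_h,\theta_h)\Vert_h \;\leq\; \sup_{(\bta_h,\bv_h)\neq\mathbf{0}}\frac{\Lambda_{h,\mathbf{u}_h}[(\btet_h,\theta_h),(\bta_h,\bv_h)]}{\Vert(\bta_h,\bv_h)\Vert_h}.
\]
The numerator now equals $\mathcal{L}_1+\mathcal{L}_2$ plus the two ``genuinely small'' convective remainders $\mathcal{C}(\mathbf{u}_h;\boldsymbol{\mathcal{P}}_h\mathbf{u}-\mathbf{u},\bta_h)$ and $\mathcal{C}(\mathbf{u}_h-\mathbf{u};\mathbf{u},\bta_h)$. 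For the first I use \eqref{boundC} with $\|\mathbf{u}_h\|_{1,h}\lesssim \nu\alpha_{\Lambda,\mathtt{d}}/(2c_{\mathtt{em}}^2)$ and the projection bound $\|\boldsymbol{\mathcal{P}}_h\mathbf{u}-\mathbf{u}\|_{0,\Omega}\lesssim h^{k+1}\|\mathbf{u}\|_{k+2}$ from Lemma \ref{l1}; for the second I again decompose $\mathbf{u}_h-\mathbf{u}=-\theta_h+(\boldsymbol{\mathcal{P}}_h\mathbf{u}-\mathbf{u})$, bound the $\theta_h$-part by \eqref{boundC} with the factor $\tfrac{c_{\mathtt{em}}^2}{\nu}\|\mathbf{u}\|_{\mathbf{Y}}$ — which is $\lesssim \tfrac{c_{\mathtt{em}}^2}{\nu}\cdot\tfrac{2}{\delta}(c_g\|\mathbf{g}\|_{1/2,\partial\Omega}+\|\mathbf{f}\|_{0,4/3})$ by Theorem \ref{WP_C} — so that under \eqref{AS_1} this coefficient is $\leq \tfrac{\alpha_{\Lambda,\mathtt{d}}}{4}$ and the term $\tfrac{\alpha_{\Lambda,\mathtt{d}}}{4}\|(\btet_h,\theta_h)\|_h$ is absorbed into the left-hand side, while the remaining projection-error part is again $O(h^{k+1})$. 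Combining with the bounds from Lemma \ref{l_1} for $\mathcal{L}_1,\mathcal{L}_2$ yields
\[
\frac{\alpha_{\Lambda,\mathtt{d}}}{4}\,\Vert(\btet_h,\theta_h)\Vert_h \;\leq\; C\,h^{k+1}\big(\|\bsi\|_{k+1}+\|\mathbf{u}\|_{k+2}\big),
\]
and since $\Vert\Pcalbb_h\bsi-\bsi_h\Vert_{\mathbb{H},h}+\Vert\boldsymbol{\mathcal{P}}_h\mathbf{u}-\mathbf{u}_h\Vert_{1,h}$ is equivalent to $\Vert(\btet_h,\theta_h)\Vert_h$, this is exactly \eqref{ER_SU} with $\mathcal{C}_{\mathtt{sub}}=4C/\alpha_{\Lambda,\mathtt{d}}$.

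The main obstacle I anticipate is the careful bookkeeping of the convective terms: one must choose the right ``linearization point'' ($\bz_h=\mathbf{u}_h$ versus $\bz_h=\mathbf{u}$) when invoking \eqref{QQ2}, make sure that point genuinely lies in $\widehat{\mathbf{Y}}_h$ so that the inf--sup constant is uniform, and verify that \emph{every} convective contribution that is proportional to the unknown error $\|(\btet_h,\theta_h)\|_h$ carries a coefficient that the smallness hypothesis \eqref{AS_1} (together with the continuous and discrete stability bounds) forces below $\alpha_{\Lambda,\mathtt{d}}/2$, leaving a strictly positive constant on the left. Everything else — the consistency terms $\mathcal{L}_1,\mathcal{L}_2$, the projection estimates, and the norm equivalence — is routine given Lemmas \ref{l1}, \ref{l_1} and \ref{ls}. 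Note this yields a \emph{suboptimal} rate for $\|\mathbf{u}-\mathbf{u}_h\|_{0,\Omega}$ because it only controls the $\|\cdot\|_{1,h}$-norm of $\theta_h$; the optimal $L^2$-velocity estimate is deferred to the duality argument of the next subsection.
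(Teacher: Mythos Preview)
Your proposal is correct and follows essentially the same route as the paper: form $\Lambda_{h,\mathbf{u}_h}[(\btet_h,\theta_h),(\bta_h,\bv_h)]$ from the error equation \eqref{ErrEq}, split the convective remainder exactly as $\mathcal{C}(\mathbf{u}_h;\boldsymbol{\mathcal{P}}_h\mathbf{u}-\mathbf{u},\bta_h)+\mathcal{C}(\boldsymbol{\mathcal{P}}_h\mathbf{u}-\mathbf{u};\mathbf{u},\bta_h)-\mathcal{C}(\theta_h;\mathbf{u},\bta_h)$, invoke the inf--sup \eqref{QQ2} with $\bz_h=\mathbf{u}_h\in\widehat{\mathbf{Y}}_h$, bound $\mathcal{L}_1,\mathcal{L}_2$ by Lemma~\ref{l_1}, and absorb the term proportional to $\|\theta_h\|_{1,h}$ via \eqref{AS_1} together with the continuous stability bound for $\|\mathbf{u}\|_{\mathbf{Y}}$. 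One small point to tidy up: the convective remainder requires $\|\boldsymbol{\mathcal{P}}_h\mathbf{u}-\mathbf{u}\|_{0,4;\Omega}$ (the $\mathbf{Y}$-norm, as the paper writes in \eqref{eq3:T5}), not the $L^2$-norm you cite from Lemma~\ref{l1}; the needed $L^4$ approximation estimate is standard but should be stated rather than conflated with \eqref{eq_Q0A}.
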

\begin{proof}
Bearing in mind the definition of $ \Lambda_{h,\bu_{h}} $ (cf. \eqref{eq:defhatGamF}), utilizing Problem \ref{p:5} (cf. \eqref{ErrEq}), and some simple computations we obtain
\begin{equation}\label{eq1:T5}
	\begin{array}{c}
\Lambda_{h,\textbf{u}_{h}}[(\btet_{h},\theta_{h}), (\bta_{h},\bv_{h})]\,=\,\big[\mathcal{C}(\mathbf{u}_h;\theta_{h},\bta_{h})-\mathcal{C}(\mathbf{u};\mathbf{u},\bta_{h})+\mathcal{C}(\mathbf{u}_{h};\mathbf{u}_{h},\bta_{h})\big]\\[2ex]
\,+\,\mathcal{L}_1((\bsi,\mathbf{u});\bta_{h})+\mathcal{L}_2(\bsi,\mathbf{v}_{h})\,.
	\end{array}
\end{equation}
Next, adding and subtracting some suitable terms and recalling that $\theta_h=\boldsymbol{\mathcal{P}}_{h}\mathbf{u}-\mathbf{u}_{h}$ the first term on the right-hand side of the above equation can be rewritten as
\begin{equation}\label{eq2:T5}
	\begin{array}{c}
\mathcal{C}(\mathbf{u}_h;\theta_{h},\bta_{h})-\mathcal{C}(\mathbf{u};\mathbf{u},\bta_{h})+\mathcal{C}(\mathbf{u}_{h};\mathbf{u}_{h},\bta_{h})\,=\,\mathcal{C}(\mathbf{u}_h;\boldsymbol{\mathcal{P}}_{h}\mathbf{u}-\mathbf{u},\bta_{h})+\mathcal{C}(\mathbf{u}_h-\mathbf{u};\mathbf{u},\bta_{h})\\[2ex]
\,=\,\big[\mathcal{C}(\mathbf{u}_h;\boldsymbol{\mathcal{P}}_{h}\mathbf{u}-\mathbf{u},\bta_{h})+\mathcal{C}(\boldsymbol{\mathcal{P}}_{h}\mathbf{u}-\mathbf{u};\mathbf{u},\bta_{h}) \big]-\mathcal{C}(\theta_h;\textbf{u},\bta_{h})\,.
	\end{array}
\end{equation}
Now, it suffices to substitute \eqref{eq2:T5} back into \eqref{eq1:T5} and employ the discrete global inf-sup of $\Lambda_{h,\textbf{u}_{h}}$ (cf. Lemma \ref{l_iAC}, eq. \eqref{QQ2}), the continuity property of $\mathcal{C}$ (cf. \eqref{boundC}) and the estimats of $\mathcal{L}_1$ and $\mathcal{L}_2$ (cf. Lemma \ref{l_1}), to arrive at
\begin{equation}\label{eq3:T5}
\begin{array}{c}
\dfrac{\alpha_{\Lambda,\mathtt{d}}}{2}\Vert(\btet_{h},\theta_{h})\Vert_{h}\,\leq\, \dfrac{1}{\nu}\Vert \boldsymbol{\mathcal{P}}_{h}\mathbf{u}-\mathbf{u}\Vert_{\mathbf{Y}}\left(c_{\mathtt{em}}\Vert\mathbf{u}_h\Vert_{1,h}+\Vert\mathbf{u}\Vert_{\mathbf{Y}} \right)+\dfrac{c_{\mathtt{em}}}{\nu}\Vert\mathbf{u}\Vert_{\mathbf{Y}}\Vert\theta_h\Vert_{1,h}\\[2ex]
\,+\,\mathcal{C}_{3}\, h^{k+1}\left(\|\bsi\|_{k+1}+\|\mathbf{u}\|_{k+2}\right)\,,
\end{array}
\end{equation}
where $ \mathcal{C}_3 $ is a constant depending on $ \mathcal{C}_1 $, $ \mathcal{C}_2 $.

In this way, using the fact that $\mathbf{u}\in\widehat{\mathbf{Y}}$ and $\mathbf{u}_h\in\widehat{\mathbf{Y}}_h$, in particularly a priori bounds provided by \eqref{Stab} and \eqref{StabG}, and the inequality \eqref{eq3:T5} we deduce that
\begin{equation*}
\begin{array}{c}
\dfrac{\alpha_{\Lambda,\mathtt{d}}}{2}\Vert(\btet_{h},\theta_{h})\Vert_{h}\,\leq\, \left(\dfrac{4}{\nu\min\{\alpha_{\Lambda},\alpha_{\Lambda,\mathtt{d}}\}}\Vert \boldsymbol{\mathcal{P}}_{h}\mathbf{u}-\mathbf{u}\Vert_{\mathbf{Y}}+\dfrac{2}{\alpha_{\Lambda}\nu}\Vert(\btet_{h},\theta_{h})\Vert_{h}\right)
\left(\Vert\textbf{f}\Vert_{0,4/3}+c_{g}\Vert\textbf{g}\Vert_{1/2,\partial\Omega} \right)\\[2.4ex]
\,+\, \mathcal{C}_3 \, h^{k+1}\left(\|\bsi\|_{k+1}+\|\mathbf{u}\|_{k+2}\right)\,,
\end{array}
\end{equation*}

which together with assumption \eqref{AS_1} implies \eqref{ER_SU} with considering
\[
C_{\mathtt{sub}}\,:=\,\dfrac{2}{\alpha_{\Lambda,\mathtt{d}}}\max\bigg\{  \dfrac{4}{\nu\min\{\alpha_{\Lambda},\alpha_{\Lambda,\mathtt{d}}\}}\left(\Vert\textbf{f}\Vert_{0,4/3}+c_{g}\Vert\textbf{g}\Vert_{1/2,\partial\Omega} \right), \mathcal{C}_3\bigg\},
\]
and concludes the proof.
\end{proof}
\subsection{Optimal convergence}
To obtain an optimal order error estimate for the vector component $\theta_h=\mathbf{u}_h -\boldsymbol{\mathcal{P}}_h\textbf{u}$
in the usual $\mathbf{L}^2$-norm, we consider a dual problem that seeks $\boldsymbol{\vartheta}$ and $\Phi$ satisfying
\begin{equation}\label{eq:Stok}
\begin{cases} 
\hspace{.1cm}\boldsymbol{\vartheta}^{\mathtt{d}}+(\mathbf{u}\otimes\Phi)^{\mathtt{d}}\,=\,\nu\boldsymbol{\nabla} \Phi &  \qin \Omega\,, \\[1ex]
\hspace{1.0cm}-\operatorname{\mathbf{div}}( \boldsymbol{\vartheta})\,=\,\theta_h &  \qin \Omega\,, 
\\[1ex]
\disp\int_{\Omega}\tr(\boldsymbol{\vartheta})=0 \qan  \Phi\,=\,\mathbf{0}&  \qon \partial\Omega\,, 
\end{cases}
\end{equation}
Assume that the usual $\mathrm{H}^2$-regularity is satisfied for the dual problem; i.e., for any
$\theta_h\in\mathbf{L}^{2}(\Omega)$, there exists a unique solution $(\boldsymbol{\vartheta},\Phi)\in \mathbb{H}^{1}(\Omega)\times\mathbf{H}^{2}(\Omega)$ such that
\begin{equation}\label{ez:reg}
\Vert \boldsymbol{\vartheta}\Vert_{1}+\Vert\Phi\Vert_{2}\,\leq\, C_{\mathtt{reg}}\Vert \theta_h\Vert_{0,\Omega}\,.
\end{equation}
Next, we give an error estimate for $ \theta_{h} $ in the L$ ^2 $-norm.
\begin{Theorem}\label{t:opt}
Let the assumption of Theorem \ref{t_ERR} be satisfied. Also, assume that $(\boldsymbol{\vartheta}, \Phi) \in \mathbb{H}^{1}(\Omega)\times\mathbf{H}^{2}(\Omega)$ be a solution of \eqref{eq:Stok}.
Then, the following error estimation holds
\begin{equation}
\Vert \boldsymbol{\mathcal{P}}_h\mathbf{u}-\mathbf{u}_h\Vert_{0}\leq \mathcal{C}_{\mathtt{opt}}\, h^{k+2}\left(\Vert\mathbf{u}\Vert_{k+2}+\Vert\bsi\Vert_{k+1} \right)\,.
\end{equation}
\end{Theorem}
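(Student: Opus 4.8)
The plan is an Aubin--Nitsche duality argument built on the auxiliary problem \eqref{eq:Stok}, whose datum is precisely the vector error $\theta_h=\boldsymbol{\mathcal{P}}_h\mathbf{u}-\mathbf{u}_h$. First I would record two structural facts used throughout: taking the trace in the first line of \eqref{eq:Stok} and recalling $\tr(\bta^{\mathtt d})=0$ forces $\operatorname{div}(\Phi)=0$, and, since $\Phi|_{\partial\Omega}=\mathbf{0}$, the variational form of \eqref{eq:Stok} reads
\begin{align*}
\mathcal{A}(\boldsymbol{\vartheta},\bta)+\mathcal{C}(\mathbf{u};\Phi,\bta)+\mathcal{B}(\bta,\Phi)&=0 \qquad\forall\,\bta\in\mathbb{X}\,,\\
\mathcal{B}(\boldsymbol{\vartheta},\bv)&=-(\theta_h,\bv)_{0,\Omega} \qquad\forall\,\bv\in\mathbf{Y}\,.
\end{align*}
As $\int_\Omega\tr(\boldsymbol{\vartheta})=0$, the projection $\Pcalbb_h\boldsymbol{\vartheta}$ lies in $\mathbb{X}_{0,h}$, and taking $\bv=\theta_h$ in the second identity gives the starting point $\|\theta_h\|_{0,\Omega}^2=-\mathcal{B}(\boldsymbol{\vartheta},\theta_h)$.

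Next I would pass to the weak forms. Using Definition \ref{d3}, the $\mathrm{L}^2$-orthogonality of the projections and elementwise integration by parts, one shows $\mathcal{B}_h(\Pcalbb_h\boldsymbol{\vartheta},\theta_h)=\mathcal{B}(\boldsymbol{\vartheta},\theta_h)-\sum_{K}\langle\boldsymbol{\vartheta}\mathbf{n}-\Pcalbf_b^{K}(\boldsymbol{\vartheta}\mathbf{n}),\,\theta_h-\Pcalbf_{b,k}^{K}\theta_h\rangle_{\partial K}$, whose last term is $O(h^{k+2})$ by Lemma \ref{ls}, the trace/inverse inequalities of Lemmas \ref{l_tr}--\ref{l_inv}, Lemma \ref{l1} and the suboptimal bound \eqref{ER_SU}. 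One also derives, exactly as for \eqref{ErrEq}, a discrete consistency identity for the dual solution: for every $\bta_h\in\mathbb{X}_{0,h}$, $\mathcal{A}_h(\Pcalbb_h\boldsymbol{\vartheta},\bta_h)+\mathcal{C}(\mathbf{u};\Phi,\bta_h)+\mathcal{B}_h(\bta_h,\boldsymbol{\mathcal{P}}_h\Phi)=\widetilde{\mathcal{L}}(\bta_h)$, with remainder $\widetilde{\mathcal{L}}$ of the same $\mathcal{L}_1$-type (stabilizer plus a boundary term carrying $\Phi-\boldsymbol{\mathcal{P}}_h\Phi$), hence $|\widetilde{\mathcal{L}}(\btet_h)|=O(h^{k+2})$ by the same tools together with the $\mathrm{H}^2$-regularity \eqref{ez:reg} (so that $\|\boldsymbol{\vartheta}\|_1+\|\Phi\|_2\lesssim\|\theta_h\|_{0,\Omega}$). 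Testing the first equation of \eqref{ErrEq} with $\bta_h=\Pcalbb_h\boldsymbol{\vartheta}$, the second with $\bv_h=\boldsymbol{\mathcal{P}}_h\Phi$, and using the symmetry of $\mathcal{A}_h$ together with these identities reduces the starting point to
\begin{equation*}
\|\theta_h\|_{0,\Omega}^2=\big[\mathcal{C}(\mathbf{u};\mathbf{u},\Pcalbb_h\boldsymbol{\vartheta})-\mathcal{C}(\mathbf{u}_h;\mathbf{u}_h,\Pcalbb_h\boldsymbol{\vartheta})\big]-\mathcal{C}(\mathbf{u};\Phi,\btet_h)+\mathcal{R}\,,
\end{equation*}
with $|\mathcal{R}|\le C\,h^{k+2}\big(\|\mathbf{u}\|_{k+2}+\|\bsi\|_{k+1}\big)\|\theta_h\|_{0,\Omega}$ collecting all the $\mathcal{L}_1$, $\mathcal{L}_2$, $\widetilde{\mathcal{L}}$ and consistency contributions.

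The remaining — and decisive — task is to control the nonlinear cluster. Writing $\mathbf{u}_h=\mathbf{u}-\mathbf{e}_h$ with $\mathbf{e}_h=(\mathbf{u}-\boldsymbol{\mathcal{P}}_h\mathbf{u})+\theta_h$, I would expand $\mathcal{C}(\mathbf{u};\mathbf{u},\cdot)-\mathcal{C}(\mathbf{u}_h;\mathbf{u}_h,\cdot)=\mathcal{C}(\mathbf{e}_h;\mathbf{u},\cdot)+\mathcal{C}(\mathbf{u};\mathbf{e}_h,\cdot)-\mathcal{C}(\mathbf{e}_h;\mathbf{e}_h,\cdot)$ and treat the pieces separately: the genuinely quadratic one is $O(\|\mathbf{e}_h\|_{\mathbf{Y}}^2)=O(h^{2(k+1)})$ by Lemma \ref{l1} and \eqref{ER_SU}; the pieces linear in $\mathbf{u}-\boldsymbol{\mathcal{P}}_h\mathbf{u}$ are handled by first replacing $\Pcalbb_h\boldsymbol{\vartheta}$ by $\boldsymbol{\vartheta}$ (mismatch $O(h^{k+2})$, since $\|\Pcalbb_0\boldsymbol{\vartheta}-\boldsymbol{\vartheta}\|_{0,\Omega}\lesssim h\|\boldsymbol{\vartheta}\|_1$) and then using that $\mathbf{u}-\boldsymbol{\mathcal{P}}_h\mathbf{u}\perp\mathbf{Y}_h$: subtracting the $\mathbf{Y}_h$-projection of the $\mathbf{H}^1$ factor obtained by contracting $\boldsymbol{\vartheta}^{\mathtt d}$ with $\mathbf{u}$ gains one power of $h$ and yields $O(h^{k+3})$. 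For the stubborn contributions — those linear in $\theta_h$ or $\btet_h$, only $O(h^{k+1})$ if bounded directly — I would substitute $\boldsymbol{\vartheta}^{\mathtt d}=\nu\boldsymbol{\nabla}\Phi-(\mathbf{u}\otimes\Phi)^{\mathtt d}$ from the dual equation, integrate by parts elementwise using $\operatorname{div}(\mathbf{u})=\operatorname{div}(\Phi)=0$ and $\Phi|_{\partial\Omega}=\mathbf{0}$ (and the second equation of \eqref{ErrEq} for the piece carrying $\btet_h$), which rewrites each of them as an $O(h^{k+2})$ term plus a quadratic term whose coefficient is, by \eqref{boundC}, \eqref{ez:reg} and the smallness hypothesis \eqref{AS_1} inherited from Theorem \ref{t_ERR}, of the form $C\nu^{-1}\big(\|\mathbf{f}\|_{0,4/3}+c_g\|\mathbf{g}\|_{1/2,\partial\Omega}\big)\|\theta_h\|_{0,\Omega}^2$ and thus absorbable on the left. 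Collecting everything, absorbing the $\|\theta_h\|_{0,\Omega}^2$-terms, and dividing by $\|\theta_h\|_{0,\Omega}$ yields $\|\theta_h\|_{0,\Omega}\le\mathcal{C}_{\mathtt{opt}}\,h^{k+2}\big(\|\mathbf{u}\|_{k+2}+\|\bsi\|_{k+1}\big)$. I expect this last step — arranging that every surviving $O(h^{k+1})$ term is either quadratic in the error or proportional to the small data — to be the main obstacle; the rest is a routine assembly of projection, trace and inverse estimates.
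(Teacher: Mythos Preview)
Your duality framework matches the paper's: start from $\|\theta_h\|_0^2=-\mathcal{B}(\boldsymbol{\vartheta},\theta_h)$, rewrite it through $\mathcal{B}_h(\Pcalbb_h\boldsymbol{\vartheta},\theta_h)$, insert the first row of \eqref{ErrEq} with $\bta_h=\Pcalbb_h\boldsymbol{\vartheta}$, and reduce to a finite list of pieces (the paper labels them $\mathrm{I}_1,\dots,\mathrm{I}_7$). The consistency remainders you collect in $\mathcal{R}$ are exactly the paper's $\mathrm{I}_1,\mathrm{I}_2,\mathrm{I}_3,\mathrm{I}_6,\mathrm{I}_7$ and are indeed $O(h^{k+2})$. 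The discrepancy --- and the gap --- is in how the extra power of $h$ is extracted from the ``stubborn'' trilinear contributions $\mathcal{C}(\theta_h;\mathbf{u},\Pcalbb_h\boldsymbol{\vartheta})$, $\mathcal{C}(\mathbf{u}_h;\theta_h,\Pcalbb_h\boldsymbol{\vartheta})$ (inside $\mathrm{I}_5$) and $\mathcal{C}(\mathbf{u};\Phi,\btet_h)$ (inside $\mathrm{I}_4$). The paper does \emph{not} substitute the dual equation there and does not rely on small-data absorption. Instead it uses the 2D Gagliardo--Nirenberg interpolation $\|v\|_{0,4}\lesssim\|v\|_0^{1/2}\|\nabla v\|_0^{1/2}$ (applied piecewise to $\theta_h$, $\boldsymbol{\mathcal{P}}_h\Phi$, $\Pcalbb_0\boldsymbol{\vartheta}$) together with the inverse inequality of Lemma~\ref{l_inv}, arriving at bounds of the form $Ch\,\|\theta_h\|_{1,h}\|\boldsymbol{\vartheta}\|_1$ and $Ch\,\|\btet_h\|_{\mathbb{H},h}\|\Phi\|_2$; the extra $h$ then meets \eqref{ER_SU} and \eqref{ez:reg} to give $O(h^{k+2})\|\theta_h\|_0$ directly.

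Your alternative does not close as written. After substituting $\boldsymbol{\vartheta}^{\mathtt d}=\nu\boldsymbol{\nabla}\Phi-(\mathbf{u}\otimes\Phi)^{\mathtt d}$ into, say, $\mathcal{C}(\theta_h;\mathbf{u},\boldsymbol{\vartheta})$, the cubic residue is controlled by $\nu^{-1}\|\theta_h\|_{0,4}\|\mathbf{u}\|_{0,4}^2\|\Phi\|_{0,4}$, which after \eqref{ez:reg} is proportional to $\|\theta_h\|_{0,4}\,\|\theta_h\|_0$, \emph{not} to $\|\theta_h\|_0^2$, and hence is not absorbable on the left under \eqref{AS_1}. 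The $\boldsymbol{\nabla}\Phi$ part, integrated by parts elementwise, leaves both a volume term $\sum_K\int_K(\mathbf{u}\cdot\nabla)\theta_h\cdot\Phi$ and interior-edge contributions in $\llbracket\theta_h\rrbracket$; neither is $O(h^{k+2})$ nor of the claimed absorbable form. The contribution $\mathcal{C}(\mathbf{u};\Phi,\btet_h)$ is worse for your plan, since it contains no $\boldsymbol{\vartheta}$ to substitute and the second row of \eqref{ErrEq} concerns $\mathcal{B}_h(\btet_h,\cdot)$, which has no direct link to this trilinear term. Finally, your orthogonality argument for the $\mathbf{u}-\boldsymbol{\mathcal{P}}_h\mathbf{u}$ pieces is unnecessary: the paper simply uses $\|\mathbf{u}-\boldsymbol{\mathcal{P}}_h\mathbf{u}\|_0\lesssim h^{k+2}\|\mathbf{u}\|_{k+2}$ together with $\|\Pcalbb_0\boldsymbol{\vartheta}\|_{0,4}\lesssim\|\boldsymbol{\vartheta}\|_1$.
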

\begin{proof}
We divide the proof into three steps.\\[2mm]
\textbf{Step 1: discrete evolution equation for the error.} First, for any $\mathbf{v}\in\mathbf{H}^{1}(\Omega)$ using Definition \ref{d3} and Green formula, we obtain the following result
\begin{equation}\label{eq0:op}
\begin{array}{c}
\mathcal{B}_h(\bta_h, \boldsymbol{\mathcal{P}}_h\mathbf{v})\,=\,\disp\sum_{K\in\mathcal{K}_h}\big[-(\bta_{0h}, \boldsymbol{\nabla}(\boldsymbol{\mathcal{P}}_h\mathbf{v}))_K+\langle \bta_{bh}\mathbf{n}, \boldsymbol{\mathcal{P}}_h\mathbf{v} \rangle_{\partial K}\big]\\[2ex]
\,=\,\disp\sum_{K\in\mathcal{K}_h}\big[(\operatorname{\mathbf{div}}(\bta_{0h}), \boldsymbol{\mathcal{P}}_h\mathbf{v})_K+\langle (\bta_{bh}-\bta_{0h})\mathbf{n}, \boldsymbol{\mathcal{P}}_h\mathbf{v} \rangle_{\partial K}\big]\\[2ex]
\,=\,\disp\sum_{K\in\mathcal{K}_h}\big[(\operatorname{\mathbf{div}}(\bta_{0h}), \mathbf{v})_K+\langle (\bta_{bh}-\bta_{0h})\mathbf{n}, \boldsymbol{\mathcal{P}}_h\mathbf{v} \rangle_{\partial K}\big]\\[2ex]
\,=\,\disp\sum_{K\in\mathcal{K}_h}\big[-(\bta_{0h}, \boldsymbol{\nabla}\mathbf{v})_K+\langle\bta_{0h}\mathbf{n}, \mathbf{v}\rangle_{\partial K}+\langle (\bta_{bh}-\bta_{0h})\mathbf{n}, \boldsymbol{\mathcal{P}}_h\mathbf{v} \rangle_{\partial K}\big]\\[2ex]
\,=\,\disp\sum_{K\in\mathcal{K}_h}\big[-(\bta_{0h}, \boldsymbol{\nabla}\mathbf{v})_K+\langle (\bta_{0h}-\bta_{bh})\mathbf{n}, \mathbf{v}-\boldsymbol{\mathcal{P}}_h\mathbf{v} \rangle_{\partial K}+\langle\bta_{bh}\mathbf{n}, \mathbf{v}\rangle_{\partial K}\big]\,.
\end{array}
\end{equation}
Now, the testing of the Eq. \eqref{eq:Stok} against $(\btet_h, \theta_h)$ yields
\begin{subequations}
\begin{align}
\mathcal{A}(\btet_h,\boldsymbol{\vartheta})+\mathcal{C}(\mathbf{u};\Phi, \btet_h)&\,=\,\disp\int_{\Omega}\boldsymbol{\nabla}\Phi : \btet_{0h}\,,\label{eq1:op}\\[2ex]
-\mathcal{B}(\boldsymbol{\vartheta},\theta_h)&\,=\,(\theta_h,\theta_h)_{0,\Omega} \,.\label{eq2:op}
\end{align}
\end{subequations}
By employing \eqref{eq0:op} and the fact that $\Phi=\textbf{0}$ on $\partial\Omega$, the term on the right-hand side of \eqref{eq1:op} can be rewritten as:
\begin{equation*}
\disp\int_{\Omega}\boldsymbol{\nabla}\Phi : \btet_{0h}\,=\,-\mathcal{B}_h(\btet_h, \boldsymbol{\mathcal{P}}_h\Phi)+\sum_{K\in\mathcal{K}_h}\langle (\btet_{0h}-\btet_{bh})\mathbf{n}, \Phi-\boldsymbol{\mathcal{P}}_h\Phi \rangle_{\partial K}\,,
\end{equation*}
from which, replacing the first term on the right hand by the second row of \eqref{ErrEq}, and using the fact that $\Phi\in\mathbf{H}^1(\Omega)$ and $\Phi=\textbf{0}$ on $\partial\Omega$ gives
\begin{equation}\label{eq:nablaPhi}
	\begin{array}{c}
\disp\int_{\Omega}\boldsymbol{\nabla}\Phi : \btet_{0h}\,=\,\mathcal{L}_2(\bsi,\boldsymbol{\mathcal{P}}_h\Phi)+\disp\sum_{K\in\mathcal{K}_h}\langle (\btet_{0h}-\btet_{bh})\mathbf{n}, \Phi-\boldsymbol{\mathcal{P}}_h\Phi \rangle_{\partial K}\\[2ex]
\,=\,\underbrace{\sum\limits_{K\in \mathcal{K}_{h}}\langle \bsi\mathbf{n}-\Pcalbf_{b}^{K}(\bsi\mathbf{n}),\Phi-\boldsymbol{\mathcal{P}}_h\Phi\rangle_{\partial K}}_{=:\mathrm{I}_1}+\underbrace{\sum_{K\in\mathcal{K}_h}\langle (\btet_{0h}-\btet_{bh})\mathbf{n}, \Phi-\boldsymbol{\mathcal{P}}_h\Phi \rangle_{\partial K}}_{=:\mathrm{I}_2}\,.
	\end{array}
\end{equation}
On the other hand, by following the similar arguments of \eqref{eeq:pa} the term on the left-hand side of \eqref{eq2:op} can be rewritten by
\begin{align*}
\mathcal{B}(\boldsymbol{\vartheta},\theta_h)&=\mathcal{B}_h(\Pcalbb_h\boldsymbol{\vartheta},\theta_h)+\sum\limits_{K\in \mathcal{K}_{h}}\langle \boldsymbol{\vartheta}\mathbf{n}-\Pcalbf_{b}^{K}(\boldsymbol{\vartheta}\mathbf{n}),\theta_h\rangle_{\partial K}\\[1ex]
&=\mathcal{L}_1((\bsi,\mathbf{u});\Pcalbb_h\boldsymbol{\vartheta})-\mathcal{A}_{h}(\btet_h, \Pcalbb_h\boldsymbol{\vartheta})-\big[\mathcal{C}(\mathbf{u};\mathbf{u},\Pcalbb_h\boldsymbol{\vartheta})-\mathcal{C}(\mathbf{u}_{h};\mathbf{u}_{h},\Pcalbb_h\boldsymbol{\vartheta})\big]\nonumber\\[1ex]
&~~~+\sum\limits_{K\in \mathcal{K}_{h}}\langle \boldsymbol{\vartheta}\mathbf{n}-\Pcalbf_{b}^{K}(\boldsymbol{\vartheta}\mathbf{n}),\theta_h\rangle_{\partial K}\\
&=\mathcal{S}(\Pcalbb_{h}\bsi,~\Pcalbb_h\boldsymbol{\vartheta})-\mathcal{A}_{h}(\btet_h, \Pcalbb_h\boldsymbol{\vartheta})-\big[\mathcal{C}(\mathbf{u};\mathbf{u},\Pcalbb_h\boldsymbol{\vartheta})-\mathcal{C}(\mathbf{u}_{h};\mathbf{u}_{h},\Pcalbb_h\boldsymbol{\vartheta})\big]\\[1ex]
&~~~+\sum\limits_{K\in \mathcal{K}_{h}}\langle(\Pcalbb_{0}\boldsymbol{\vartheta}-\Pcalbb_{b}\boldsymbol{\vartheta})\mathbf{n},~\mathbf{u}-\boldsymbol{\mathcal{P}}_{h}\mathbf{u}\rangle_{\partial K}
+\sum\limits_{K\in \mathcal{K}_{h}}\langle \boldsymbol{\vartheta}\mathbf{n}-\Pcalbf_{b}^{K}(\boldsymbol{\vartheta}\mathbf{n}),\theta_h\rangle_{\partial K}\\
&=:\sum_{i=3}^{7} \mathrm{I}_i \,,
\end{align*}
where the first row of \eqref{ErrEq} and the definition of $\mathcal{L}_1$ were applied in the second and fourth lines, respectively.\\[2mm]
\textbf{Step 2: bounding the error terms $\mathrm{I}_1$-$\mathrm{I}_7$}. For the term $\mathrm{I}_1$, applying the Cauchy-Schwarz inequality, the bound \eqref{eql5:e1} stated in Lemma \ref{ls} and trace inequality (cf. Lemma \ref{l_tr}) we estimate
\begin{equation}\label{eq:I1}
	\begin{array}{c}
\big| \mathrm{I}_1 \big| \,\leq\, \left( \disp\sum\limits_{K\in \mathcal{K}_{h}}h_K\Vert \bsi\mathbf{n}-\Pcalbf_{b}^{K}(\bsi\mathbf{n})\Vert_{0,\partial K}^{2}\right)^{1/2}\left( \disp\sum\limits_{K\in \mathcal{K}_{h}}h_K^{-1}\Vert\Phi-\boldsymbol{\mathcal{P}}_h\Phi\Vert_{0,\partial K}^2\right)^{1/2}\\[2ex]
\,\leq\, Ch^{k+1}\Vert \bsi\Vert_{k+1}\left(\disp\sum\limits_{K\in \mathcal{K}_{h}}\big[h_K^{-2}\Vert\Phi-\boldsymbol{\mathcal{P}}_h\Phi\Vert_{0,K}^2 + \Vert\nabla(\Phi-\boldsymbol{\mathcal{P}}_h\Phi)\Vert_{0,K}^2 \big] \right)^{1/2}\\[2ex]
\,\leq\, Ch^{k+2}\Vert \bsi\Vert_{k+1}\Vert\Phi\Vert_{2}\,.
	\end{array}
\end{equation}
As for the term $\mathrm{I}_2$, the Cauchy-Schwarz and trace inequalities and the definition of discrete norm $\Vert\cdot\Vert_{\mathbb{H},h}$ (cf. first paragraph of Sec. \ref{sub.sec4.1}) implies that,
\begin{equation}\label{eq:I2}
	\begin{array}{c}
\big| \mathrm{I}_2 \big| \,\leq\, \left(\disp\sum_{K\in\mathcal{K}_h}h_K\Vert (\btet_{0h}-\btet_{bh})\mathbf{n}\Vert_{0,\partial K}^2 \right)^{1/2}\left(\disp\sum_{K\in\mathcal{K}_h}h_K^{-1}\Vert \Phi-\boldsymbol{\mathcal{P}}_h\Phi \Vert_{0,\partial K}^{2} \right)^{1/2}\\[2ex]
\,\leq\, \mathcal{C}_{\mathtt{tr}}\Vert \btet_h\Vert_{\mathbb{H},h}
\left(\disp\sum\limits_{K\in \mathcal{K}_{h}}\big[h_K^{-2}\Vert\Phi-\boldsymbol{\mathcal{P}}_h\Phi\Vert_{0,K}^2 + \Vert\nabla(\Phi-\boldsymbol{\mathcal{P}}_h\Phi)\Vert_{0,K}^2 \big] \right)^{1/2}\\[2ex]
\,\leq\, \mathcal{C}_{\mathtt{tr}}\, h \,\Vert \btet_h\Vert_{\mathbb{H},h}\Vert\Phi\Vert_{2}\,.
	\end{array}
\end{equation}
For term $\mathrm{I}_3$, we use the defintion of $\mathcal{S}(\cdot,\cdot)$ given by \eqref{def:Stab}, the continuity and orthogonality properties of operator $\Pcalbf_b^K$ and trace inequality, to get
\begin{equation}
	\begin{array}{c}
\big| \mathrm{I}_3\big|\,=\,\big| \mathcal{S}(\Pcalbb_{h}\bsi,~\Pcalbb_h\boldsymbol{\vartheta})\big|\\[2ex]
\,=\,\bigg|\disp\sum_{K\in\mathcal{K}_h} h_{K}\left\langle \Pcalbf_b^K((\Pcalbb_0^K\bsi)\mathbf{n}-\bsi\mathbf{n}),~ \Pcalbf_b^K((\Pcalbb_0^K\boldsymbol{\vartheta})\mathbf{n}-\boldsymbol{\vartheta}\mathbf{n})\right\rangle_{0,\partial K}\bigg|\\[2ex]
\,\leq\, \left( \disp\sum_{K\in\mathcal{K}_h} h_{K}\Vert \Pcalbf_b^K((\Pcalbb_0^K\bsi)\mathbf{n}-\bsi\mathbf{n})\Vert_{0,\partial K}^{2}\right)^{1/2}\left( \disp\sum_{K\in\mathcal{K}_h} h_{K}\Vert \Pcalbf_b^K((\Pcalbb_0^K\boldsymbol{\vartheta})\mathbf{n}-\boldsymbol{\vartheta}\mathbf{n})\Vert_{0,\partial K}^{2}\right)^{1/2}\\[2ex]
\,\leq\,\left( \disp\sum_{K\in\mathcal{K}_h} h_{K}\Vert (\Pcalbb_0^K\bsi)\mathbf{n}-\bsi\mathbf{n}\Vert_{0,\partial K}^{2}\right)^{1/2}\left( \disp\sum_{K\in\mathcal{K}_h} h_{K}\Vert (\Pcalbb_0^K\boldsymbol{\vartheta})\mathbf{n}-\boldsymbol{\vartheta}\mathbf{n}\Vert_{0,\partial K}^{2}\right)^{1/2}\\[2ex]
\,\leq\, \left(\disp\sum_{K\in\mathcal{K}_h} \Vert \Pcalbb_0^K\bsi-\bsi\Vert_{0, K}^{2}+h_{K}^2\Vert \boldsymbol{\nabla}(\Pcalbb_0^K\bsi-\bsi)\Vert_{0, K}^{2}  \right)^{1/2}\\[2ex]
\,\times\,\left(\disp\sum_{K\in\mathcal{K}_h} \Vert \Pcalbb_0^K\boldsymbol{\vartheta}-\boldsymbol{\vartheta}\Vert_{0, K}^{2}+h_{K}^2\Vert \boldsymbol{\nabla}(\Pcalbb_0^K\boldsymbol{\vartheta}-\boldsymbol{\vartheta})\Vert_{0, K}^{2}  \right)^{1/2}\\[2ex]
\,\leq\, Ch^{k+2}\Vert\bsi\Vert_{k+1}\Vert\boldsymbol{\vartheta}\Vert_{1}\,, 
\end{array}
\end{equation}
where the approximation property of the projector $\Pcalbb_0^K$ (cf. Lemma \ref{l1}) was used in the last step.
Next, in order to estimate $\mathrm{I}_4$, we add zero in the form $\pm \mathcal{A}_h(\btet_h, \boldsymbol{\vartheta})$, to find that
\begin{equation}\label{ez:0a}
\mathrm{I}_4 \,=\,\mathcal{A}_h(\btet_h, \Pcalbb_h^K\boldsymbol{\vartheta}_h- \boldsymbol{\vartheta})\,+\,\mathcal{A}_h(\btet_h, \boldsymbol{\vartheta})\,.
\end{equation}
The first term on the right-hand side of the above equation can be estimated using the continuity of $\mathcal{A}_h$ given in Lemma \ref{l_ah} and the approximation property of the projector $\Pcalbb_0^K$ as
\begin{equation}\label{ez:0b}
\big| \mathcal{A}_h(\btet_h, \Pcalbb_h^K\boldsymbol{\vartheta}_h- \boldsymbol{\vartheta})\big| \,\leq\, c_{\mathcal{A}}\, \Vert \btet_h\Vert_{\mathbb{H},h}\,\Vert \Pcalbb_h^K\boldsymbol{\vartheta}_h- \boldsymbol{\vartheta}\Vert_{\mathbb{H},h}\,\leq\, Ch\Vert \btet_h\Vert_{\mathbb{H},h}\Vert \boldsymbol{\vartheta}\Vert_{1}\,.
\end{equation}
Also, an application of the consistency property of $\mathcal{A}_h$ (cf. Lemma \ref{l1}) and testing the first row of \eqref{eq:Stok} with  $\btet_h$, yields
\begin{equation}\label{ez:0}
\mathcal{A}_h(\btet_h, \boldsymbol{\vartheta})\,=\,\mathcal{A}(\btet_h, \boldsymbol{\vartheta})\,=\,
-\disp\int_{\Omega}\boldsymbol{\nabla}\Phi: \btet_{0h}-\mathcal{C}(\textbf{u};\Phi, \btet_h)\,=\,
-\mathrm{I}_1-\mathrm{I}_2-\mathcal{C}(\textbf{u};\Phi, \btet_h)\,,
\end{equation}
where the the equivalent expression of $\int_{\Omega}\boldsymbol{\nabla}\Phi: \btet_{0h}$ given by \eqref{eq:nablaPhi} was used in the last step. Then, thanks to \eqref{eq:I1} and \eqref{eq:I2} it suffices to estimate the last term on the right-hand side. For this purpose, we rewrite the associated term by adding and subtracting some suitable term as
\begin{equation}\label{ez:1}
\mathcal{C}(\textbf{u};\Phi, \btet_h)\,=\,\mathcal{C}(\textbf{u};\Phi-\boldsymbol{\mathcal{P}}_h\Phi, \btet_h)\,+\,\mathcal{C}(\textbf{u};\boldsymbol{\mathcal{P}}_h\Phi, \btet_h)\,.
\end{equation}
To determine upper bounds for the right-hand side terms, we use H\"older inequality, the approximation and the continuity properties of the projector $\boldsymbol{\mathcal{P}}_h$, Gagliardo–Nirenberg inequality and inverse inequality. This gives
\begin{equation*}
\big| \mathcal{C}(\textbf{u};\Phi-\boldsymbol{\mathcal{P}}_h\Phi, \btet_h)\big|\,\leq\, \dfrac{1}{\nu}\Vert\textbf{u}\Vert_{\mathbf{Y}}\Vert \Phi-\boldsymbol{\mathcal{P}}_h\Phi\Vert_{\mathbf{Y}}\Vert\btet_h\Vert_{\mathbb{H},h}\,\leq\, \dfrac{1}{\nu}\Vert\textbf{u}\Vert_{\mathbf{Y}}\, h \,\Vert\Phi\Vert_{2}\,\Vert\btet_h\Vert_{\mathbb{H},h}\,,
\end{equation*}
and
\begin{align*}
\big| \mathcal{C}(\textbf{u};\boldsymbol{\mathcal{P}}_h\Phi, \btet_h)\big|&\,\leq\, \dfrac{1}{\nu}\Vert\textbf{u}\Vert_{\mathbf{Y}}\Vert \boldsymbol{\mathcal{P}}_h\Phi\Vert_{\mathbf{Y}}\Vert\btet_h\Vert_{\mathbb{H},h}\\[1mm]
&\,\leq\, \dfrac{1}{\nu}\Vert\textbf{u}\Vert_{\mathbf{Y}}\left(C_{\mathtt{GN}}\Vert \boldsymbol{\mathcal{P}}_h\Phi\Vert_{0}^{1/2}\Vert \nabla(\boldsymbol{\mathcal{P}}_h\Phi)\Vert_{0}^{1/2}\right)\Vert\btet_h\Vert_{\mathbb{H},h}\\[1mm]
&\,\leq\, \dfrac{1}{\nu}
\Vert\textbf{u}\Vert_{\mathbf{Y}}
\mathcal{C}_{\mathtt{inv}}h^{1/2}\left(C_{\mathtt{GN}}\Vert \boldsymbol{\mathcal{P}}_h\Phi\Vert_{0,4}^{1/2}\Vert \nabla(\boldsymbol{\mathcal{P}}_h\Phi)\Vert_{0,4}^{1/2}\right)\Vert\btet_h\Vert_{\mathbb{H},h}\\[1mm]
&\,\leq\, Ch\Vert\textbf{u}\Vert_{\mathbf{Y}}\Vert\Phi\Vert_{2}\Vert\btet_h\Vert_{\mathbb{H},h}\,.
\end{align*}
Combining the above bounds with \eqref{ez:1} and \eqref{ez:0}, along with \eqref{eq:I1}, \eqref{eq:I2} yields
\begin{equation*}
\big| \mathcal{A}_h(\btet_h, \boldsymbol{\vartheta})\big| \,\leq\, C\left(h^{k+2}\Vert\bsi\Vert_{k+1}+h\Vert\btet_h\Vert_{\mathbb{H},h}\right)\Vert\Phi\Vert_{2}\,,
\end{equation*}
which, together with estimate \eqref{ez:0b} gives
\begin{equation}
\big| \mathrm{I}_4\big|\,\leq\, C\left(h^{k+2}\Vert\bsi\Vert_{k+1}+h\Vert\btet_h\Vert_{\mathbb{H},h}\right)\Vert\Phi\Vert_{2}+Ch\Vert \btet_h\Vert_{\mathbb{H},h}\Vert \boldsymbol{\vartheta}\Vert_{1}\,.
\end{equation}
In what follow, we focus on the estimate of $\mathrm{I}_5$. To this end, using similar
arguments as in the proof of Theorem \ref{t_ERR} (cf. equation \eqref{eq2:T5}), we obtain
\begin{equation}
\begin{array}{c}
\mathrm{I}_5 \,=\,\mathcal{C}(\mathbf{u};\mathbf{u},\Pcalbb_h\boldsymbol{\vartheta})-\mathcal{C}(\mathbf{u}_{h};\mathbf{u}_{h},\Pcalbb_h\boldsymbol{\vartheta})\\[2ex]
\,=\,\mathcal{C}(\mathbf{u}-\boldsymbol{\mathcal{P}}_h\mathbf{u};\mathbf{u}, \Pcalbb_h\boldsymbol{\vartheta})+\mathcal{C}(\mathbf{u}_h;\mathbf{u}-\boldsymbol{\mathcal{P}}_h\mathbf{u}, \Pcalbb_h\boldsymbol{\vartheta})+\mathcal{C}(\theta_h;\mathbf{u},\Pcalbb_h\boldsymbol{\vartheta})+\mathcal{C}(\mathbf{u}_h;\theta_h,\Pcalbb_h\boldsymbol{\vartheta})\,,
\end{array}
\end{equation}
and therefore, each of the terms above are estimated as follows:
\begin{align*}
\big| \mathcal{C}(\mathbf{u}-\boldsymbol{\mathcal{P}}_h\mathbf{u};\mathbf{u}, \Pcalbb_h\boldsymbol{\vartheta})\big|&\,\leq\, \Vert \mathbf{u}-\boldsymbol{\mathcal{P}}_h\mathbf{u}\Vert_{0}\Vert \mathbf{u}\Vert_{0,4}\Vert \Pcalbb_0\boldsymbol{\vartheta}\Vert_{0,4}\leq Ch^{k+2}\Vert \mathbf{u}\Vert_{k+2}\Vert \mathbf{u}\Vert_{\mathbf{Y}}\Vert \boldsymbol{\vartheta}\Vert_{1}\\[1ex]
\big| \mathcal{C}(\mathbf{u}_h;\mathbf{u}-\boldsymbol{\mathcal{P}}_h\mathbf{u}, \Pcalbb_h\boldsymbol{\vartheta})\big|&\,\leq\, Ch^{k+2}\Vert \mathbf{u}\Vert_{k+2}\Vert \mathbf{u}_h\Vert_{\mathbf{Y}}\Vert \boldsymbol{\vartheta}\Vert_{1}\\[1ex]
\big| \mathcal{C}(\theta_h;\mathbf{u},\Pcalbb_h\boldsymbol{\vartheta})\big| &\,\leq\, \Vert\theta_h\Vert_{0,4}\Vert  \mathbf{u}\Vert_{0,4}\Vert\Pcalbb_h\boldsymbol{\vartheta}\Vert_0\\
&\,\leq\, \Vert\theta_{h}\Vert_{0,4}\Vert  \mathbf{u}\Vert_{\mathbf{Y}}\mathcal{C}_{\mathtt{inv}}h^{1/2}\Vert\Pcalbb_h\boldsymbol{\vartheta}\Vert_{0,4}\\[1ex]
&\,\leq\, (C_{\mathtt{GN}}\Vert\theta_{h}\Vert_{0}^{1/2}\Vert\nabla\theta_{h}\Vert_{0}^{1/2})\Vert  \mathbf{u}\Vert_{\mathbf{Y}}\mathcal{C}_{\mathtt{inv}}h^{1/2}\left(C_{\mathtt{GN}}\Vert\Pcalbb_h\boldsymbol{\vartheta}\Vert_{0}^{1/2}\Vert\nabla(\Pcalbb_h\boldsymbol{\vartheta})\Vert_{0}^{1/2}\right)\\[1ex]
&\,\leq\, h^{\frac{1}{4}}(C_{\mathtt{GN}}\Vert\theta_{h}\Vert_{0,4}^{1/2}\Vert\nabla\theta_{h}\Vert_{0}^{1/2})\Vert  \mathbf{u}\Vert_{\mathbf{Y}}\mathcal{C}_{\mathtt{inv}}^3 h^{\frac{3}{4}}\left(C_{\mathtt{GN}}\Vert\Pcalbb_h\boldsymbol{\vartheta}\Vert_{0,4}^{1/2}\Vert\nabla(\Pcalbb_h\boldsymbol{\vartheta})\Vert_{0}^{1/2}\right)\\[1ex]
&\,\leq\, Ch\Vert\nabla\theta_{h}\Vert_{0}\Vert  \mathbf{u}\Vert_{\mathbf{Y}}\Vert\nabla(\Pcalbb_h\boldsymbol{\vartheta})\Vert_{0}\leq Ch\Vert\theta_{h}\Vert_{h}\Vert  \mathbf{u}\Vert_{\mathbf{Y}}\Vert\boldsymbol{\vartheta}\Vert_{1}\\[1ex]
\big| \mathcal{C}(\mathbf{u}_h;\theta_h,\Pcalbb_h\boldsymbol{\vartheta})\big|&\,\leq\, Ch\Vert\theta_{h}\Vert_{h}\Vert  \mathbf{u}_h\Vert_{\mathbf{Y}}\Vert\boldsymbol{\vartheta}\Vert_{1}
\end{align*}
which leads to
\begin{equation}
\big| \mathrm{I}_5 \big|\,\leq\, C\left(h^{k+2}\Vert \mathbf{u}\Vert_{k+2}+h\Vert\theta_{h}\Vert_{h} \right)(\Vert  \mathbf{u}\Vert_{\mathbf{Y}}+\Vert  \mathbf{u}_h\Vert_{\mathbf{Y}})\Vert\boldsymbol{\vartheta}\Vert_{1}\,.
\end{equation}
On the other hand, by arguments similar to those used in the estimation term $\mathrm{I}_1$, we derive
\begin{equation*}
	\begin{array}{c}
\big| \mathrm{I}_6\big|\,=\,\disp\sum\limits_{K\in \mathcal{K}_{h}}\bigg| \langle(\Pcalbb_{0}\boldsymbol{\vartheta}-\Pcalbb_{b}\boldsymbol{\vartheta})\mathbf{n},~\mathbf{u}-\boldsymbol{\mathcal{P}}_{h}\mathbf{u}\rangle_{\partial K}\bigg|\\[1ex]
\,=\,\disp\sum\limits_{K\in \mathcal{K}_{h}}\bigg| \langle(\Pcalbb_{0}\boldsymbol{\vartheta}-\boldsymbol{\vartheta})\mathbf{n}-(\Pcalbb_{b}\boldsymbol{\vartheta}-\boldsymbol{\vartheta})\mathbf{n},~\mathbf{u}-\boldsymbol{\mathcal{P}}_{h}\mathbf{u}\rangle_{\partial K}\bigg|\\[1ex]
\,\leq\, \Vert \Pcalbb_{h}\boldsymbol{\vartheta}-\boldsymbol{\vartheta}\Vert_{\mathbb{H},h}\left(\disp\sum\limits_{K\in \mathcal{K}_{h}}\big[h_K^{-2}\Vert \textbf{u}-\boldsymbol{\mathcal{P}}_h \textbf{u}\Vert_{0,K}^2 + \Vert\nabla(\textbf{u}-\boldsymbol{\mathcal{P}}_h \textbf{u})\Vert_{0,K}^2 \big] \right)^{1/2}\\[1ex]
\,\leq\, Ch^{k+2}\Vert\textbf{u}\Vert_{k+2}\Vert\boldsymbol{\vartheta}\Vert_{1}\,.
\end{array}
\end{equation*}
Similarly, using \eqref{eql5:e1} we have
\begin{equation*}
	\begin{array}{c}
\big|\mathrm{I}_7\big| \,=\,\disp\sum\limits_{K\in \mathcal{K}_{h}}\bigg|\langle \boldsymbol{\vartheta}\mathbf{n}-\Pcalbf_{b}^{K}(\boldsymbol{\vartheta}\mathbf{n}),\theta_h\rangle_{\partial K}\bigg|
=\bigg|\langle \boldsymbol{\vartheta}\mathbf{n}-\Pcalbf_{b}^{K}(\boldsymbol{\vartheta}\mathbf{n}),\theta_h-\bar{\theta}_h\rangle_{\partial K}\bigg|\\[1ex]
\,\leq\, \left(\disp\sum\limits_{K\in \mathcal{K}_{h}}h_K\Vert \boldsymbol{\vartheta}\mathbf{n}-\Pcalbf_{b}^{K}(\boldsymbol{\vartheta}\mathbf{n})\Vert_{0,\partial K}^{2}\right)^{1/2}\left(\disp\sum\limits_{K\in \mathcal{K}_{h}}h_K^{-1}\Vert \theta_h-\bar{\theta}_h\Vert_{0,\partial K}^{2} \right)^{1/2}\\[1ex]
\,\leq\, Ch \Vert \boldsymbol{\vartheta}\Vert_1 \left(\disp\sum\limits_{K\in \mathcal{K}_{h}}\big[h_K^{-2}\Vert \theta_h-\bar{\theta}_h\Vert_{0,K}^{2}+\Vert\nabla\theta_h\Vert_{0,K}^2\big] \right)^{1/2}\\[2ex]
\,\leq\, Ch\Vert\theta_h\Vert_{1,h}\Vert \boldsymbol{\vartheta}\Vert_1 \,.
\end{array}
\end{equation*}
\textbf{Step 3: error estimate.} We now insert the bounds on $\mathrm{I}_1$-$\mathrm{I}_7$ in \eqref{eq2:op}, yielding
\begin{equation}
\Vert \theta_h\Vert_{0}\,\leq\, \Big(Ch^{k+2}+h(\Vert \theta_h\Vert_{1,h}+\Vert\btet\Vert_{\mathbb{H},h}) \Big)(\Vert \boldsymbol{\vartheta}\Vert_1+\Vert \Phi\Vert_2)\,.
\end{equation}
The sought result follows from Theorem \ref{t_ERR} and employing regularity \eqref{ez:reg}.
\end{proof}
We end this section by establishing the error estimate for the pressure.
By proceeding as in \cite [Theorem 5.5, eqs. (5.38) and (5.39)]{Gatica18}
(see also \cite[eq. 5.14]{Gatica21}), we deduce the existence of a positive constant $C$,
independent of $h$, such that
\begin{equation}\label{p-widehat-p}
	\|p - p_h\|_{0,\Omega} \,\le\, C\, \Big\{ \|\bsi - \bsi_h\|_{0,\Omega}
	\,+\, \|\bu - \bu_h\|_{0,4;\Omega} \Big\} \,.
\end{equation}
Therefore, thanks to Theorems \ref{t_ERR} and \ref{t:opt} we get
\begin{equation}\label{er_p}
	\Vert p-p_{h}\Vert_{0}\leq  C \, h^{k+1}\left(\|\bsi\|_{k+1}+\|\mathbf{u}\|_{k+2}\right).
\end{equation}
\newpage
\section{Numerical Results}\label{sec6}
The purpose of this section is to illustrate the efficiency of the WG pseudostress-based mixed-FEM for solving the Brinkman problem. 
Like in Ref. \cite{Gatica21}, we use a real Lagrange multiplier to impose the zero integral mean condition for $ \bsi_h $ of the discrete scheme. As a result, Poblem \ref{p:3} is rewritten as follow:
find $((\bsi_{h},\mathbf{u}_{h}),\lambda)\in \mathbb{X}_{0,h}\times\mathbf{Y}_{h}\times \mathrm{R}$ such that
\begin{equation}\label{NL}
\left\{\begin{array}{rcll}
\mathcal{A}_{h}(\bsi_{h}, \bta_{h})+\mathcal{C}(\mathbf{u}_{h},\mathbf{u}_{h};\bta_{h})+\mathcal{B}_{h}(\bta_{h},\mathbf{u}_{h})+\lambda\int_{\Omega}\operatorname{tr}(\bta_{h})&=&\mathcal{G}(\bta_{h}), &\quad \forall\, \bta_{h}\in \mathbb{X}_{h}\,,\\[3mm]
\mathcal{B}_{h}(\bsi_{h},\bv_{h})&=&\mathcal{F}(\bv_{h}), &\quad \forall \bv_{h}\in \mathbf{Y}_{h}\,,\\[3mm]
\xi\disp\int_{\Omega}\operatorname{tr}(\bsi_{h})&=&0, &\quad \forall\, \xi\in \mathrm{R}\,.
\end{array}\right.
\end{equation}
In addition, the individual errors associated to the main unknowns and the postprocessed pressure are 
denoted and defined, as usual, by
\[
\begin{array}{c}
	\disp
	{\tt e}(\boldsymbol{\sigma}) \,:=\, \Vert\bsi-\bsi_h
	\Vert_{\bdiv_{4/3};\Omega}\,, \quad 
	{\tt e}(\mathbf{u}) \,:=\, \Vert\mathbf{u}-\mathbf{u}_h\Vert_{0,4;\Omega}\,,  \qan
	{\tt e}(p) \,:=\, \Vert p-\widehat{p}_h\Vert_{0,\Omega}\,.
\end{array}
\]
In turn, for all $ \star\in \{\bsi,\bu,p\} $, 
we let ${\tt r}(\star)~:=~\dfrac{\text{log}({\tt e}(\star)/{\tt e}'(\star))}{\text{log}(h/h')}$ be the experimental 
rates of convergence, where $ h $ and $ h^{'} $ denote two consecutive mesh sizes with 
errors ${\tt e}(\star)$ and ${\tt e}^{'}(\star)$, respectively. 

In addition,
a fixed point strategy with a fixed tolerance $\mathtt{Tol}=1$e-6 is utilized for the solving of the nonlinear equation \eqref{NL}. To that end, we begin with vector all zeros as an initial guess and stop iterations when the velocity's error between two concessive iterations is adequately small,
\[
\Vert \bsi^{m}-\bsi_{h}^{m}\Vert_0 +
\Vert \mathbf{u}^{m}-\mathbf{u}_{h}^{m}\Vert_{\mathbf{Y}}\leq \mathtt{Tol}\,.
\]
In the first example, we mainly verify the accuracy of the constructed numerical scheme. Examples 2 and 3 is utilized to evaluate the effectiveness of the discrete scheme by simulation of practical problems for which no analytical solutions. We performed our computations using the MATLAB 2020 b software on an Intel Core i7 machine with 32 GB of memory.
\begin{figure}[t!]
	\begin{center}
		\includegraphics[width=.45\textwidth]{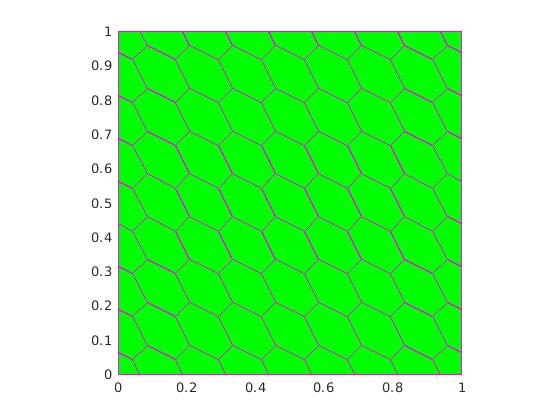} 
		\includegraphics[width=.45\textwidth]{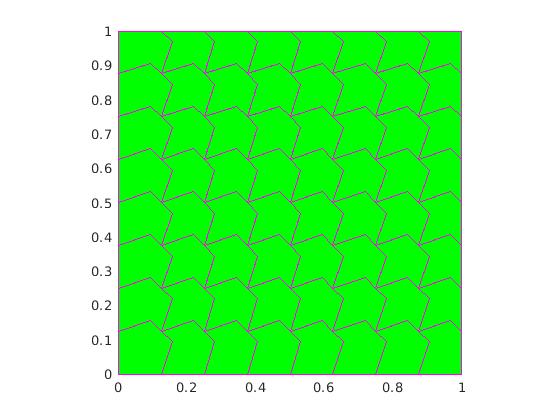} 
		\includegraphics[width=.45\textwidth]{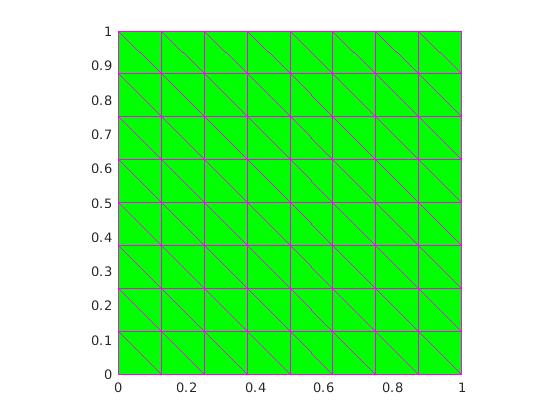}
	\end{center}
	\caption{Example 1, samples of the kind of meshes utilized.}\label{figure-1}
\end{figure}
\subsection{Example 1: Accuracy assessment}
We turn first to the numerical verification of the rates of convergence anticipated by Theorems \ref{t_ERR} and \ref{t:opt}. To this end, we consider parameter $\nu=0.1$ and  design the exact solution as follows:
\begin{equation*}
\begin{array}{l}
\mathbf{u}(x_1,x_2) \,=\,\begin{pmatrix} x_1^{2}\exp(-x_1)(1+x_2)\left(2\sin(1+x_2)+(1+x_2)\cos(1+x_2)\right)\\[2mm]
x_1(x_1-2)\exp(-x_1)(1+x_2)^{2}\sin(1+x_2) \end{pmatrix}\,,
\end{array}
\end{equation*}
\[
p(x_1,x_2) \,=\, \sin(2\pi x_1)\sin(2\pi x_2)\,,
\]
for all $ (x_1 , x_2)^{\mathtt{t}}\in \Omega =(0,1)^{2} $. The model problem is then complemented with the appropriate Dirichlet boundary condition.
Using the weak Galerkin spaces given in Sec. \ref{sec3} with polynomial
degree k$=0$, $1$, we solve Problem \ref{p:3} and obtain the approximated stress on a sequence of four succesively refined polygonal
meshes mades of hexagons, non-convex and triangular elements (see Fig. \ref{figure-1}). In addition, the discrete velocity field and the discrete pressure are computed using post-processing approach stated in Sec. \ref{sec5}. At each refinement level we compute errors between approximate and smooth exact solutions in L$^2$-norm. The results of this convergence study are collected in Tables \ref{Tab1}-\ref{Tab3}. One can see that the rate of convergence of individual stress and pressure variables is $ O(h^{k}) $, whereas it is $ O(h^{k+1}) $ for the velocity, which both are in agreement with the theoretical analysis stated in Theorems \ref{t_ERR} and \ref{t:opt}.
On the
other hand, in order to illustrate the accurateness of the discrete scheme, in Fig. \ref{fig1} we display components of the approximate velocity, stress, pressure on polygonal mesh with $h = 3.030$e-2 and k$=0$.
\begin{figure}[t!]
	\centering
	\begin{tabular}{ccc}
	\hspace{-2.2cm}\includegraphics[width=.44\textwidth]{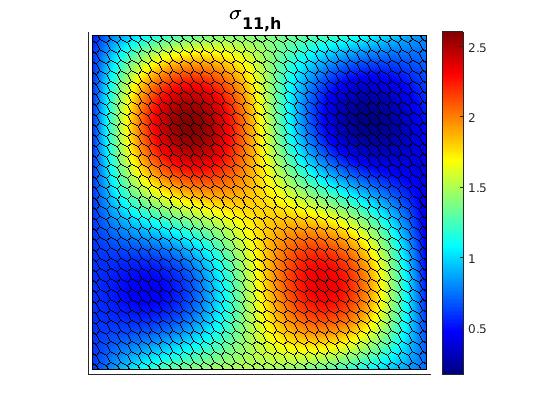}
\hspace{-1.33cm}	\includegraphics[width=.44\textwidth]{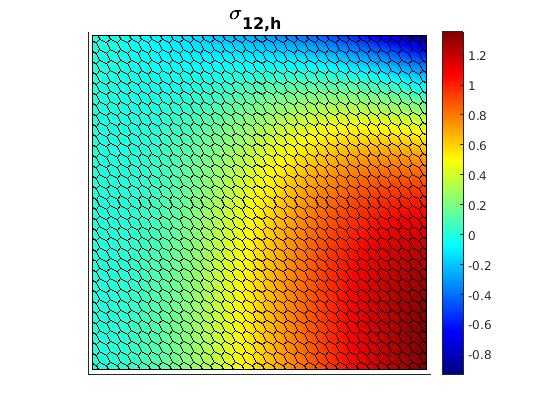}
	\hspace{-1.15cm}\includegraphics[width=.44\textwidth]{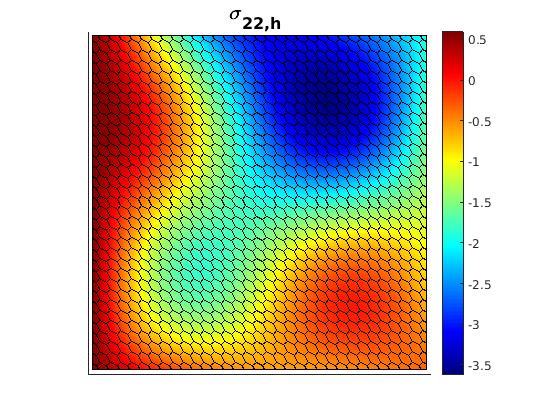}\\
	\hspace{-2.2cm}\includegraphics[width=.44\textwidth]{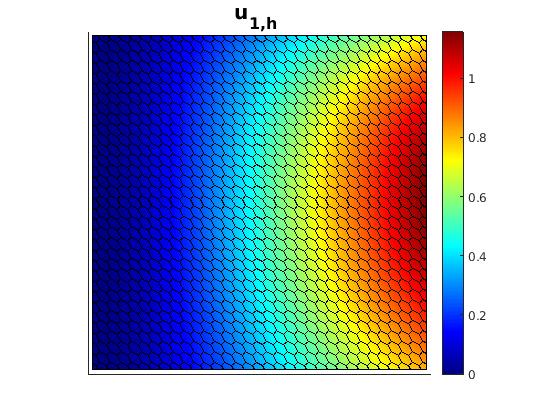}
\hspace{-1.33cm}	\includegraphics[width=.44\textwidth]{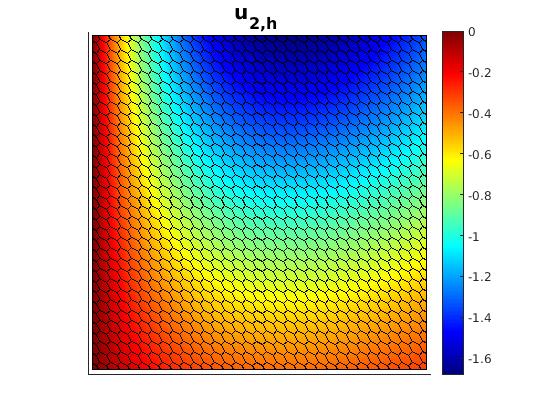}
\hspace{-1.15cm}\includegraphics[width=.44\textwidth]{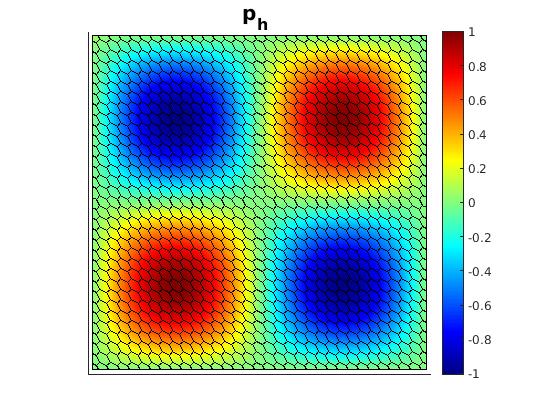}
	\end{tabular}
	\vspace{-.6cm}
	\caption{Example 1, snapshots of the numerical stress components 
		(first row, left to right), the velocity components and pressure (second row, left to right), computed with $k = 0$
		in the mesh made of hexagons with $h = 3.030$e-2. }\label{fig1}
\end{figure}

\begin{table}[t!]
	\caption{Example 1, history of convergence using hexagon}\label{Tab1}
	
	\bigskip
	
	\centerline{
		{\footnotesize\begin{tabular}{|cccccccc|}			
				\hline\hline
				$k$ & $h$ & $\mathtt{e}(\boldsymbol{\sigma})$ & ${\tt r}(\boldsymbol{\sigma})$ & 
				$\mathtt{e}(\mathbf{u})$ & ${\tt r}(\mathbf{u})$ & $\mathtt{e}(p)$ & ${\tt r}(p)$ \\
				\hline\hline
				{}& {2.000e-01}& {1.27585e+00}& {-}& {1.63990e-01}& {-}& {3.92748e-01}& {-}\\[1mm]
				{}& {1.111e-01}& {6.88337e-01}& {1.04985e+00}& {5.33247e-02}& {1.91124e+00}& {2.07409e-01}& {1.08623e+00}\\[1mm]
					
				{0}& {5.882e-02}& {3.14151e-01}& {1.23336e+00}& {1.58302e-02}& {1.90960e+00}& {9.47219e-02}& {1.23233e+00}\\[1mm]	
				{}& {3.030e-02}& {1.17182e-01}& {1.48674e+00}& {4.37012e-03}& {1.94051e+00}& {3.57120e-02}& {1.47063e+00}\\[1mm]
				\hline
				{}& {2.000e-01}& {3.34158e-01}& {-}& {3.63525e-02}& {-}& {9.11384e-02}& {-}\\[1mm]
				{}& {1.111e-01}& {7.65257e-02}& {2.50769e+00}& {5.47756e-03}& {3.21988e+00}& {2.16022e-02}& {2.44916e+00}\\[1mm]	
				{1}& {5.882e-02}& {1.64299e-02}& {2.41911e+00}& {7.67223e-04}& {3.09068e+00}& {5.13068e-03}& {2.26035e+00}\\[1mm]	
				{}& {3.030e-02}& {3.95366e-03}& {2.14755e+00}& {1.19847e-04}& {2.79901e+00}& {1.30515e-03}& {2.06382e+00}\\[1mm]	
				\hline
	\end{tabular}}}
\end{table}


\begin{table}[t!]
	\caption{Example 1, history of convergence using non-convex}\label{Tab2}
	
	\bigskip
	
	\centerline{
		{\footnotesize\begin{tabular}{|cccccccc|}			
				\hline\hline
				$k$ & $h$ & $\mathtt{e}(\boldsymbol{\sigma})$ & ${\tt r}(\boldsymbol{\sigma})$ & 
				$\mathtt{e}(\mathbf{u})$ & ${\tt r}(\mathbf{u})$ & $\mathtt{e}(p)$ & ${\tt r}(p) $ \\
				\hline\hline
				{}& {2.500e-01}& {1.92411e+00}& {-}& {2.16399e-01}& {-}& {5.24963e-01}& {-}\\[1mm]
				{}& {1.250e-01}& {9.61276e-01}& {1.00117e+00}& {6.67542e-02}& {1.69676e+00}& {2.69741e-01}& {9.60642e-01}\\[1mm]	
				{0}& {6.250e-02}& {3.68609e-01}& {1.38286e+00}& {1.91857e-02}& {1.79883e+00}& {1.04967e-01}& {1.36163e+00}\\[1mm]	
				{}& {3.125e-02}& {1.21234e-01}& {1.60429e+00}& {5.08183e-03}& {1.91661e+00}& {3.54124e-02}& {1.56761e+00}\\[1mm]
				\hline
				{}& {2.500e-01}& {6.42714e-01}& {-}& {4.33653e-02}& {-}& {1.82378e-01}& {-}\\[1mm]
				{}& {1.250e-01}& {2.05453e-01}& {1.64537e+00}& {6.90082e-03}& {2.65170e+00}& {6.23466e-02}& {1.54855e+00}\\[1mm]	
				{1}& {6.250e-02}& {5.59423e-02}& {1.87679e+00}& {1.35203e-03}& {2.35164e+00}& {1.76339e-02}& {1.82196e+00}\\[1mm]	
				{}& {3.125e-02}& {1.48719e-02}& {1.91135e+00}& {3.12928e-04}& {2.11122e+00}& {4.77578e-03}& {1.88455e+00}\\[1mm]
				\hline	
	\end{tabular}}}
\end{table}


\begin{table}[t!]
	\caption{Example 1, history of convergence using triangular}\label{Tab3}
	
	\bigskip
	
	\centerline{
		{\footnotesize\begin{tabular}{|cccccccc|}			
				\hline\hline
				$k$ & $h$ & $\mathtt{e}(\boldsymbol{\sigma})$ & ${\tt r}(\boldsymbol{\sigma})$ & 
				$\mathtt{e}(\mathbf{u})$ & ${\tt r}(\mathbf{u})$ & $\mathtt{e}(p)$ & ${\tt r}(p) $ \\
				\hline\hline
				{}& {1.768e-01}& {1.33737e+00}& {-}& {1.21833e-01}& {-}& {2.64186e-01}& {-}\\[1mm]
				{}& {8.839e-02}& {7.00927e-01}& {9.32063e-01}& {3.13360e-02}& {1.95902e+00}& {1.22818e-01}& {1.10503e+00}\\[1mm]	
				{0}& {4.419e-02}& {3.39792e-01}& {1.04461e+00}& {7.98954e-03}& {1.97164e+00}& {5.00518e-02}& {1.29503e+00}\\[1mm]	
				{}& {2.210e-02}& {1.63696e-01}& {1.05363e+00}& {2.01389e-03}& {1.98813e+00}& {1.99353e-02}& {1.32810e+00}\\[1mm]
				\hline
				{}& {1.768e-01}& {7.89707e-01}& {-}& {2.86149e-02}& {-}& {1.69212e-01}& {-}\\[1mm]
				{}& {8.839e-02}& {2.50022e-01}& {1.65926e+00}& {5.94594e-03}& {2.26679e+00}& {5.68132e-02}& {1.57454e+00}\\[1mm]	
				{1}& {4.419e-02}& {7.26955e-02}& {1.78212e+00}& {1.38897e-03}& {2.09789e+00}& {1.72263e-02}& {1.72162e+00}\\[1mm]	
				{}& {2.210e-02}& {2.02924e-02}& {1.84093e+00}& {3.40546e-04}& {2.02810e+00}& {4.95787e-03}& {1.79682e+00}\\[1mm]
				\hline	
	\end{tabular}}}
\end{table}
\subsection{Example 2 \cite{Hu19}: Lid-Driven Cavity Problem}
The next example is chosen to illustrate the performance of the
proposed method for modeling the lid-driven cavity flow in the square domain $\Omega=(0,1)^{2}$ with different values of $ \nu $.
For boundary conditions, we set the inflow $\textbf{g}=(1,0)^{\mathtt{t}}$ at the top end of $ \Omega $ and no-slip condition everywhere on the boundary.
In addition, the body force term is $\textbf{f}=\textbf{0}$.
In Fig. \ref{fig2} we display the computed velocity components and
pressure on hexagon mesh with $ h=3.03 $e-2 and $ k=0 $, which confirm the obtained results in \cite{Hu19}.
\begin{figure}[!h]
		\centering
	\begin{tabular}{ccc}
   \hspace{-1.3cm} \includegraphics[width=.4\textwidth]{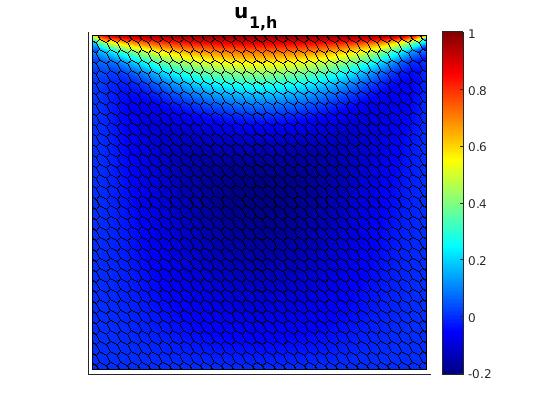}
   \hspace{-1.3cm}\includegraphics[width=.4\textwidth]{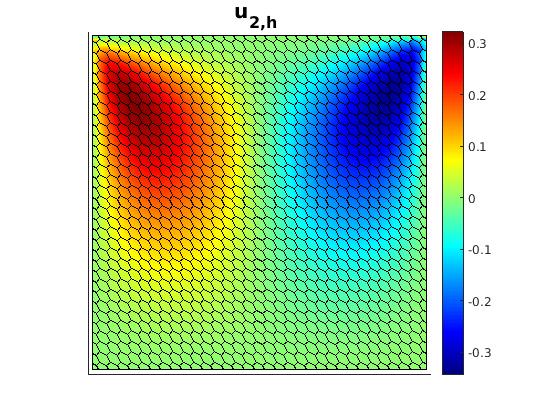}
   \hspace{-1.3cm}\includegraphics[width=.4\textwidth]{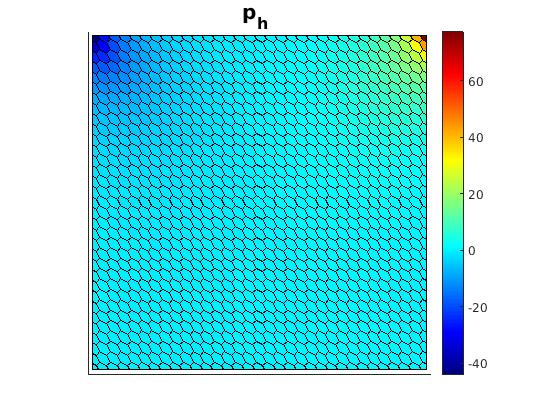}\\
  \hspace{-1.3cm} \includegraphics[width=.4\textwidth]{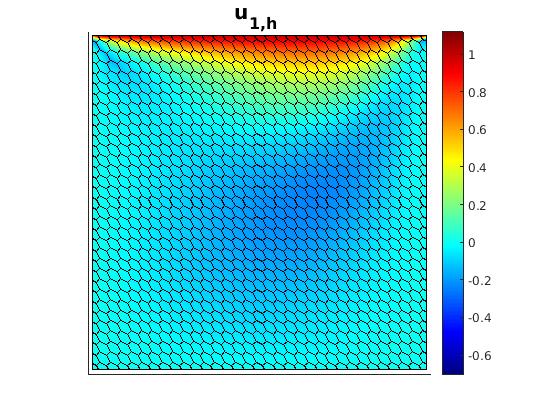}
   \hspace{-1.3cm}\includegraphics[width=.4\textwidth]{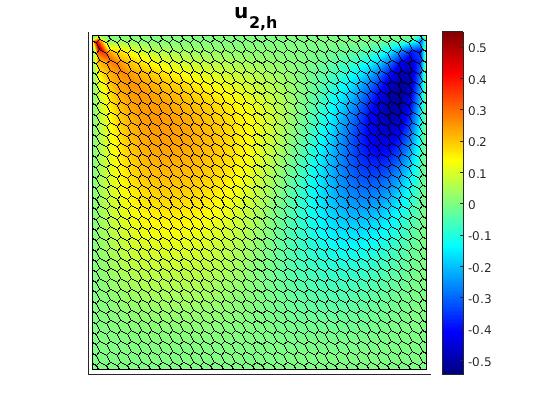}
   \hspace{-1.3cm}\includegraphics[width=.4\textwidth]{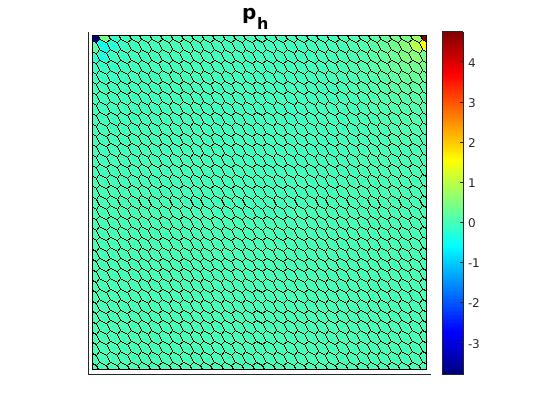}
    \end{tabular}
    \vspace{-.2cm}
	\caption{Example 2. The numerical velocity and pressure for $ \nu =1 $ (first row) and $ \nu=0.01 $ (second row) 
.}\label{fig2}
\end{figure}
\subsection{Example 4: Fluid flows with heterogeneous porous inclusions}
Mathematical modeling and simulation of fluid flows in the presence of single or multiple obstacles have been topics of interest for several decades due to their wide applicability in various practical circumstances across disciplines. Flow past solid bodies such as cylinders and airfoils has been investigated
broadly for a long time (see for instance, \cite{Fornberg,Li,Sohankar}) by using Navier-Stokes equations. 
We study the time-dependent Navier-Stokes equation, which is dependent on the porosity parameter, within the exterior flow domain ($\Omega_{f}$) characterized by large porosity and in the porous subdomains ($\Omega_{p}$) characterized by small porosity.
 A typical computational domain in the present problem is illustrated in Fig. \ref{fig7}-(a). Here, denoting domain by $\Omega=(-1,1)^{2}$ and the final time by $ t_F $, we consider the time-dependent Navier-Stokes equation with the porosity $\phi$ using the following non-dimensionalization
\begin{figure}[!h]
	\begin{center}
	\subfigure[]
    {\includegraphics[width=.45\textwidth]{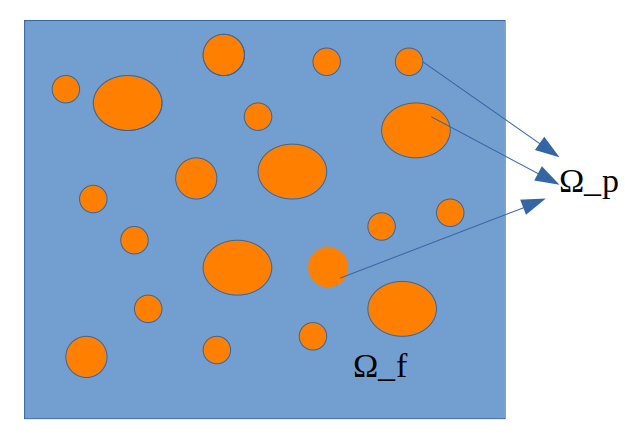}}
    \subfigure[]
    {\includegraphics[width=.31\textwidth]{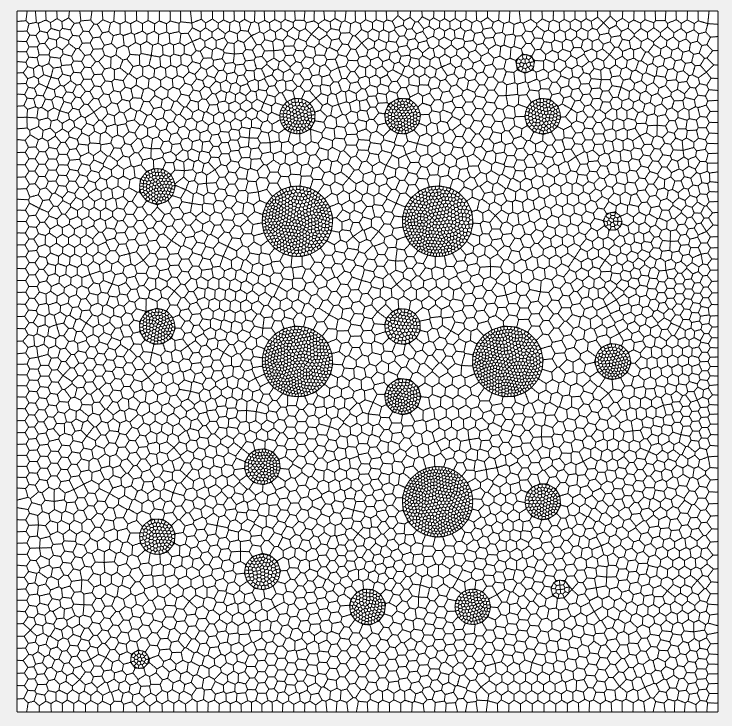}}
    \end{center}
    \vspace{-.2cm}
	\caption{Example 4. (a). Schematic of of the computational domain $\Omega=\Omega_{p}\cup\Omega_{f}$ , where $\Omega_{f}$ is the large porosity
subdomain and $\Omega_{p}$ is subdomain with the small porosity. (b). The polygonal mesh of domain.}\label{fig7}
\end{figure}
\begin{equation}
\begin{array}{rcll}
\dfrac{1}{\phi}\left(\mathbf{u}_{t}+\mathbf{u}\cdot\boldsymbol{\nabla} \mathbf{u}-\nu\boldsymbol{\Delta} \mathbf{u}\right)+\nabla p &=&\textbf{f} & \quad\qin\Omega\times(0,t_F]\,, \\[2ex]
\div(\boldsymbol{u})&=&0 & \quad \qin \Omega\times(0,t_F]\,, \\[2ex]
\bu(\cdot,0)&=& \mathbf{0} & \quad\qin\Omega\,.
\end{array}
\end{equation}
On the left boundary, we prescribe a inflow velocity $\mathbf{u}_{\mathtt{in}}=(0,1)^{\mathtt{t}}$. On
the right boundary,  we impose a zero normal Cauchy stress, which means that we need to set
\[
\big(\bsi+\bu\otimes\bu\big)\bn\,=\,\mathbf{0}\qquad\qon\Gamma_{out}\,,
\]
and on the remainder of the boundary we set no-slip velocity $ \bu=\mathbf{0} $. In addition,
 we performed discretization of time by Backward Euler method and taken 24 circular inclusions with different radii at relatively random locations. The performance of the proposed method has been tested with the following data:
\begin{equation*}
\begin{array}{llll}
\phi\,=\,\left\{\begin{array}{ll}
0.2& \qon\Omega_{p}\,,\\[1mm]
1&\qon\Omega_{f}\,,
\end{array}\right. & \quad \nu \,=\, 0.01\,, & t_{F}\,=\, 0.1\,, & \Delta t \,=\, 2e-3\,.
\end{array}
\end{equation*}
\begin{figure}[!h]
	\begin{center}
    \hspace{-.2cm}\includegraphics[width=.33\textwidth]{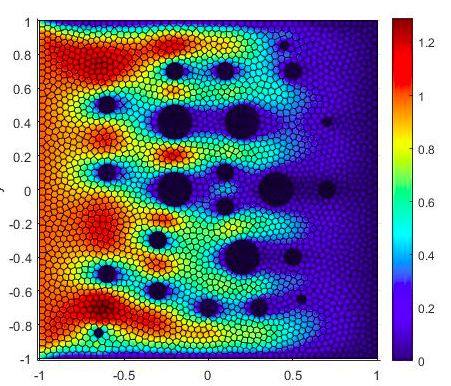}
   \includegraphics[width=.33\textwidth]{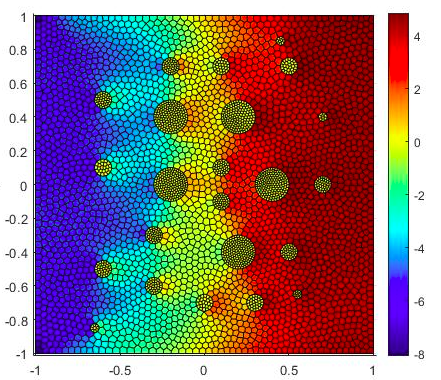}
   \includegraphics[width=.33\textwidth]{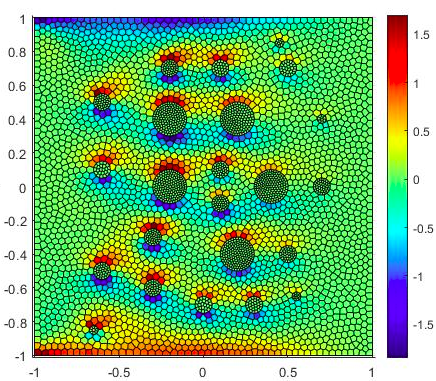}
    \end{center}
    \vspace{-.2cm}
	\caption{Example 4. The numerical velocity and the first two components of stress from left to right.}\label{fig8}
\end{figure}
In Fig. \ref{fig7}-(b), we depicted the polygonal mesh of the computational domain.
The results of the desired case simulation are presented in Fig. \ref{fig8}. It can be seen that the fluid is flowing faster in the area that has a large porosity; for the area that has a small porosity, the fluid is flowing slowly. In the area that has small porosity, we can see the gradation motion of the fluid clearly; this fact emphasizes that the proposed scheme can deal with the irregular pattern of porosity. 
\section{Declaration}
The authors declare that they have no known competing financial interests or personal relationships that could have appeared to influence the work reported in this paper.

\end{document}